\theoremstyle{plain}
\theoremstyle
{plain}
\newtheorem{theorem}{Theorem}[section]
\newtheorem{proposition}[theorem]{Proposition}
\newtheorem{fact}[theorem]{Fact}
\newtheorem{lemma}[theorem]{Lemma}
\newtheorem{corollary}[theorem]{Corollary}
\theoremstyle{definition}
\newtheorem{definition}[theorem]{Definition}
\newtheorem{example}[theorem]{Example}
\newtheorem{remark}[theorem]{Remark}
\newcommand{\leqdr}{\mathbin{\rotatebox[origin=c]{-45}{$\leq$}}}
\newcommand{\lequr}{\mathbin{\rotatebox[origin=c]{45}{$\leq$}}}
\newcommand{\N}{\mathbb{N}}
\newcommand{\Z}{\mathbb{Z}}
\newcommand{\R}{\mathbb{R}}
\newcommand{\Tau}{\mathcal T}
\newcommand{\setdef}[2]{\left\{{#1} \ \middle|\ {#2}\right\}}
\newcommand{\GH}{d_{GH}}
\newcommand{\dint}{d_I}
\newcommand{\MP}[1]{(A_{#1},\chi_{#1})}
\newcommand{\extMP}[1]{(A_{#1},\chi_{#1}\colon X_{#1}\to\N)}
\newcommand{\newinf}{\mathop{\mathrm{inf}\vphantom{\mathrm{sup}}}}
\newcommand{\intmod}[1]{\mathcal{I}_{#1}}
\newcommand{\sixPackPersistenceModule}[6]{{#6}_{#5}^{{#3}\leq{#4}}({#2};{#1})}
\newcommand{\Dom}[5]{\sixPackPersistenceModule{#1}{#2}{#3}{#4}{#5}{\mathrm{Dom}}}
\newcommand{\Cod}[5]{\sixPackPersistenceModule{#1}{#2}{#3}{#4}{#5}{\mathrm{Cod}}}
\newcommand{\Img}[5]{\sixPackPersistenceModule{#1}{#2}{#3}{#4}{#5}{\mathrm{Im}}}
\newcommand{\Ker}[5]{\sixPackPersistenceModule{#1}{#2}{#3}{#4}{#5}{\mathrm{Ker}}}
\newcommand{\Cok}[5]{\sixPackPersistenceModule{#1}{#2}{#3}{#4}{#5}{\mathrm{Cok}}}
\newcommand{\Rel}[5]{\sixPackPersistenceModule{#1}{#2}{#3}{#4}{#5}{\mathrm{Rel}}}
\DeclareMathOperator{\im}{im}
\DeclareMathOperator{\dom}{dom}
\DeclareMathOperator{\codom}{cod}
\DeclareMathOperator{\coker}{coker}
\DeclareMathOperator{\diam}{diam}
\DeclareMathOperator{\dist}{dist}
\DeclareMathOperator{\dis}{dis}
\DeclareMathOperator{\codis}{codis}
\DeclareMathOperator{\gr}{Gr}
\DeclareMathOperator{\Dgm}{Dgm}
\DeclareMathOperator{\ecc}{ecc}
\DeclareMathOperator{\rad}{rad}
\DeclareMathOperator{\sep}{sep}
 \author{Ond\v{r}ej Draganov, Sophie Rosenmeier, Nicol\`o Zava}
\title{Gromov-Hausdorff distance between chromatic metric pairs and stability of the six-pack}
\date{}
\begin{document}

\maketitle

\begin{abstract}
Chromatic metric pairs consist of a metric space and a coloring function partitioning a subset thereof into various colors. 
It is a natural extension of the notion of chromatic point sets studied in chromatic topological data analysis.
A useful tool in the field is the six-pack, a collection of six persistence diagrams, summarizing homological information about how the colored subsets interact.
We introduce a suitable generalization of the Gromov-Hausdorff distance to compare chromatic metric pairs. We show some basic properties and validate this definition by obtaining the stability of the six-pack with respect to that distance.
We conclude by discussing its restriction to metric pairs and its role in the stability of the \v{C}ech persistence diagrams.
\end{abstract}

\noindent {\bf Keywords.} Chromatic point clouds, persistence homology, bottleneck distance, six-pack, Gromov-Hausdorff distance, stability, ambient \v{C}ech complex.

\tableofcontents

\section{Introduction}

In topological data analysis (TDA), computable topological invariants are constructed to extract patterns and meaningful features from datasets.
For practical use, we want the invariants to be stable under perturbations: 
if two datasets are similar, differing only by some small errors acquired in the data collection, then their associated invariants should be similar as well.
To rigorously formulate such a \emph{stability result}, we need to choose a notion of distance between the invariants on one hand, and between the data sets on the other.  
The cornerstone example of topological invariants in TDA are persistence diagrams summarizing persistent homology of the studied spaces.
The first stability result for persistence diagrams appeared in \cite{Coh-SteEdeHar} using the supremum metric on filter functions to compare data sets, and bottleneck distance to compare the persistence diagrams.

Data can often be represented as metric spaces. One classical way to compare metric spaces is the Gromov-Hausdorff distance---a dissimilarity notion introduced
by Gromov \cite{gromov:81} to study the convergence of metric structures (see Edwards \cite{edwards:75} and Kadets \cite{kadets:75} for earlier notions). In the past decades, it has found applications to dataset and shape matching and recognition, pioneered by M\'emoli and Sapiro \cite{MemSap}.
Its original definition relies on the classic notion of Hausdorff distance, which, despite being intuitive, strongly depends on the particular embedding of the objects we are comparing into a common space. 
The Gromov-Hausdorff distance removes that dependency by considering the ``best'' possible isometric embedding to a common space---one that minimizes the Hausdorff distance. 
There are characterizations removing the need for an explicit third common space; one using distortions of correspondences, and one using distortions and codistortions of pairs of maps (see, for example, \cite{BurBurIva,KalOst,Tuz}).

In \cite{ChaCoh-SteGuiMemOud} (see  
also \cite{ChaDeSOud}), the stability for Vietoris-Rips persistence diagrams using the Gromov-Hausdorff distance was proven: 
for every pair of metric spaces, 
the bottleneck distance of their Vietoris-Rips persistence diagrams is at most twice their Gromov-Hausdorff distance. 
This also implies the previous stability result discussed in \cite{Coh-SteEdeHar} in the case of Vietoris-Rips persistence diagrams.

Data often comes in modalities that are not best represented simply by a metric space---e.g., a directed network is given instead of distances, or some additional information is carried by data points that is not easily encoded by distances. The Gromov-Hausdorff distance can be adapted to fit such situations, and this approach has been used to prove the stability of invariants defined on those more general spaces.
We mention the stability of Vietoris-Rips and Dowker persistence diagrams of general networks using the network distance \cite{ChoMem3} and the stability of the grounded persistence path homology shown in \cite{ChaHarTil}. As for further generalizations of the Gromov-Hausdorff distance, we also mention \cite{directed,Khe,Zav_q}.

One natural extension of studying a single metric space is studying the interaction of two or more. In this paper, we consider an ambient metric space with disjoint subspaces, and topological invariants capturing their mutual interaction. The spatial interaction of several finite sets of points in Euclidean space is relevant in different fields. Such data is modeled as \emph{chromatic point sets}: a finite subset $X
\subseteq \R^n$ colored by a map $\chi\colon X
\rightarrow \N$. 
Examples of chromatic point sets are positions of biological cells in a tumor microenvironment colored by their type---e.g., cancer cells versus tumor cells---or positions of atoms in a material colored by their element. Different approaches have been developed to study such situations using persistent homology. For example, witness complexes \cite{DabMemFraCar} and Dowker complexes \cite{ChoMem3} were used in \cite{StoDheBul} to distinguish different simulated tumor microenvironments. One downside of this approach is the lack of stability of the complexes against perturbations in the positions of the points.

In \cite{CulDraEdeSag1} (see also \cite{CulDraEdeSag2}) a different approach was proposed consisting in summarizing the interactions using \emph{six-packs} of persistence diagrams---a work building on a discretization in \cite{BobRea} and results from \cite{Coh-SteEdeHarMor} on the computation of image, kernel and cokernel persistence, which also proves stability of the used diagrams with respect to perturbations of the points and the bottleneck distance for the diagrams.

Given an inclusion of two filtered topological spaces, a six-pack is a collection of six persistence diagrams. Namely, those of the domain, codomain and the quotient of the codomain by the domain, and those of the images, kernels and cokernels of the homology maps induced by the inclusion.
For more details, see \cite[Section~5.2]{CulDraEdeSag1}.

In this paper, we define the notion of {\em chromatic metric pair} generalizing the notion of chromatic point sets. It consists of a {\em metric pair} $(A,X)$, where $X$ is a subspace of an ambient metric space $A$, and a coloring function $\chi\colon X\to\N$.  We consider morphisms between two chromatic metric pairs depending on a constraint set $C\subseteq\mathcal P(\N)$, which prescribes where points with certain colors can be mapped. Using this notion and inspired by the characterization of the Gromov-Hausdorff distance using pairs of maps (\cite{KalOst}), we introduce and study the {\em $C$-constrained Gromov-Hausdorff distance} between chromatic metric pairs. Several properties of the usual Gromov-Hausdorff distance can be generalized to this setting. In particular, we provide the counterparts of the usual characterizations using either embedding into a common space or correspondences, and show that, under compactness hypotheses, the infimum involved in the definition can be achieved. Finally, we validate our definition by proving the stability of six-packs of persistence diagrams. As a particular case of our result, the stability of the persistence diagrams induced by the ambient \v{C}ech complex follows. We also provide an alternative proof of this result relying on notions introduced in \cite{Mem_tripods}.

\medskip

\noindent {\bf Structure of the paper.} In Section \ref{sec:background}, we provide the needed background concerning the usual notion of Gromov-Hausdorff distance between metric spaces. Section \ref{sec:chromatic_metric_pairs} is devoted to the introduction of chromatic metric pairs and $C$-constrained maps between them. In particular, in \S\ref{sub:constraints_vs_topologies}, we investigate the relationship between constraint sets and Alexandrov topologies of $\N$. We introduce and study the $C$-constrained Gromov-Hausdorff distance in Section \ref{sec:C-constrained_GH}. We describe suitable invariants and provide explicit computations in \S\ref{sub:invariants}, prove characterizations of the new notion in \S\ref{sub:characterisations} and use them to discuss related concepts (\S\ref{subsub:related_notions}), and show the existence of distortion and codistortion-minimizing pair of maps in \S\ref{sub:inf_is_min}. Finally, Section \ref{sec:stability} is devoted to recalling some background from persistent homology, the construction of the six-pack (\S\ref{sub:ph-background}) and proving its stability (\S\ref{sub:stability}). We discuss the stability of the ambient \v{C}ech persistence diagrams in~\S\ref{sub:Cech_stability}.

\medskip
\noindent {\bf Notation.} We denote by $\N$ the set of natural numbers including $0$. For every set $X$, $\mathcal P(X)$ denotes its power set.

\medskip
\noindent \textbf{Disclaimer.} Closely related work is independently studied by Yaoying Fu, Lander Ver Hoef, Evgeniya Lagoda, Shiying Li, Tom Needham and Morgan Weiler with a preprint titled ``Persistent Homology for Labeled Datasets: Stability and Generalized Landscapes'' planned to be released soon. Later versions of the papers will explain the relation and differences in more detail.

\medskip
\noindent {\bf Acknowledgements.}

The research was supported by the FWF Grant, Project number I4245-N35 and AI4scMed, France 2030 ANR-22-PESN-000 from Agence Nationale de la Recherche. We would like to thank Ulrich Bauer for bringing the paper \cite{Mem_tripods} to our attention, and Thomas Weighill for the discussion about the relationship between constraint sets with topologies on $\N$ that ultimately led to Subsection~\ref{sub:constraints_vs_topologies}.

\section{Gromov-Hausdorff distance and classic stability results}\label{sec:background}

In this section we present background about the Gromov-Hausdorff distance. For more details, we refer the reader to \cite{BurBurIva,Tuz,Pet}. 
We first recall three equivalent definitions of the Gromov-Hausdorff distance, one using distortions of correspondences, one using distortions and codistortions of pairs of maps and one via embeddings into a common space.

Given a relation $R\subseteq X\times Y$ between two metric spaces, $(X, d_X)$ and $(Y, d_Y)$, its {\em distortion} is \[
    \dis R =
    \sup_{(x,y), (x',y')\in {R}}
    \lvert d_X(x,x')-d_Y(y,y')\rvert.
\]
A relation $R$ is a {\em correspondence} if every point in both $X$ and $Y$ is related, that is, for every $x\in X$ and $y\in Y$, there are $y_x\in Y$ and $x_y\in X$ such that $(x,y_x),(x_y,y)\in R$. Given two sets, $X$ and $Y$, we denote by $\mathcal R(X,Y)$ the set of all correspondences between $X$ and $Y$.

\begin{definition}\label{def:GH-distance}
    Let $X$ and $Y$ be two metric spaces. Their {\em Gromov-Hausdorff distance} is defined as \[
        \GH(X,Y)=\frac{1}{2}
        \inf_{R\in\mathcal R(X,Y)}
        \dis R.
    \]
\end{definition}

The intuition is that we match points of $X$ and $Y$ together, allowing using each point multiple times, in such a way that distances on one side of the relation edges are closely approximated by the distances on the other side of the edges.

The Gromov-Hausdroff distance is clearly symmetric and it satisfies the triangle inequality. It is also true that two isometric metric spaces $X$ and $Y$ satisfy $\GH(X,Y)=0$. The converse implication is not generally true---there are non-isometric metric spaces with zero Gromov-Hausdorff distance, e.g., the open and the closed unit intervals in $\R$. However, under compactness assumptions, the implication holds (see Corollary \ref{coro:0-GH}). 

We next give a characterization of the Gromov-Hausdorff distance due to Kalton and Ostrovskii (\cite{KalOst}) using pairs of maps instead of correspondences. Given a map $f\colon X\to Y$, we denote by $\gr f$ its {\em graph}, i.e., $\gr f=\{(x,f(x))\mid x\in X\}\subseteq X\times Y$. The {\em distortion of $f$}, $\dis f$, is defined as the distortion of its graph. The \emph{codistortion} of two maps $f\colon X\to Y$ and $g\colon Y\to X$ between metric spaces is defined as \[
    \codis(f,g)=
    \sup_{\substack{x\in X \\ \,y\in Y}}
    \lvert d_X(x,g(y))-d_Y(f(x),y)\rvert.
\]

\begin{theorem}\label{thm:Kalton_Ostrovskii}
    Given two metric spaces $X$ and $Y$,
    \begin{equation}\label{eq:Kalton_Ostrovskii}
        \GH(X,Y)=\frac{1}{2}\inf_{\substack{f\colon X\to Y\\g\colon Y\to X}}\max\{\dis f,\dis g,\codis(f,g)\}.
    \end{equation}
\end{theorem}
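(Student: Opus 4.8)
The plan is to prove the two inequalities between $\GH(X,Y)$, as given in Definition~\ref{def:GH-distance}, and the right-hand side of \eqref{eq:Kalton_Ostrovskii} separately, passing back and forth between correspondences and pairs of maps.

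For the inequality $\GH(X,Y)\le\frac12\inf_{f,g}\max\{\dis f,\dis g,\codis(f,g)\}$, I would start from an arbitrary pair of maps $f\colon X\to Y$ and $g\colon Y\to X$ and form the relation $R=\gr f\cup\{(g(y),y)\mid y\in Y\}\subseteq X\times Y$. This $R$ is a correspondence, since every $x\in X$ is related via $(x,f(x))$ and every $y\in Y$ via $(g(y),y)$. The key step is then to bound $\dis R$ by $\max\{\dis f,\dis g,\codis(f,g)\}$, by splitting the supremum defining $\dis R$ according to which of the two pieces of $R$ each of the points $(x,y),(x',y')\in R$ belongs to: if both lie in $\gr f$ the corresponding term is at most $\dis f$; if both lie in $\{(g(y),y)\mid y\in Y\}$ it is at most $\dis g$; and if one lies in each, say $(x,f(x))$ and $(g(y'),y')$, the term equals $\lvert d_X(x,g(y'))-d_Y(f(x),y')\rvert\le\codis(f,g)$. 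Taking the infimum over all pairs $(f,g)$ and using $\GH(X,Y)=\frac12\inf_R\dis R$ yields the first inequality.

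For the reverse inequality, I would take any correspondence $R\in\mathcal R(X,Y)$ and use its defining property (together with the axiom of choice) to select, for each $x\in X$, a point $f(x)\in Y$ with $(x,f(x))\in R$, and for each $y\in Y$, a point $g(y)\in X$ with $(g(y),y)\in R$. Then $\gr f\subseteq R$ and $\{(g(y),y)\mid y\in Y\}\subseteq R$, so $\dis f\le\dis R$ and $\dis g\le\dis R$; moreover, for any $x\in X$ and $y\in Y$ the pairs $(x,f(x))$ and $(g(y),y)$ both lie in $R$, whence $\lvert d_X(x,g(y))-d_Y(f(x),y)\rvert\le\dis R$, i.e.\ $\codis(f,g)\le\dis R$. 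Thus $\max\{\dis f,\dis g,\codis(f,g)\}\le\dis R$, and taking infima on both sides gives the second inequality.

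Combining the two inequalities proves the identity. I do not expect a genuine obstacle: the argument is a bookkeeping of the three types of pairs arising in a distortion computation, the conceptual point being that the codistortion term is exactly what controls the ``mixed'' pairs with one endpoint in $\gr f$ and the other in the graph of $g$, which a single graph cannot see. The only minor points requiring care are the trivial cases in which $X$ or $Y$ is empty and the explicit use of choice when extracting $f$ and $g$ from a correspondence.
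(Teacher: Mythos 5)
Your proposal is correct and follows the same route as the paper's proof: in one direction you extract $f$ and $g$ from a correspondence by choosing related points, and in the other you form $R=\gr f\cup(\gr g)^{-1}$ and observe that its distortion is controlled by (in fact equals) $\max\{\dis f,\dis g,\codis(f,g)\}$ via the same three-case split. The only difference is that you spell out the bookkeeping the paper leaves implicit.
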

For completeness, we recall the simple proof.
\begin{proof}
    Let $R\subseteq X\times Y$ be a correspondence. We define two maps $f\colon X\to Y$ and $g\colon Y\to X$ as follows. Since $R$ is a correspondence, for every $x\in X$, we can pick a point $y_x\in Y$ such that $(x,y_x)\in R$ and set $f(x)=y_x$, and, similarly, $g$ can be defined. Then, $\dis f\leq\dis R$, $\dis g\leq\dis R$ and $\codis(f,g)\leq\dis R$. Hence, the inequality $(\geq)$ in \eqref{eq:Kalton_Ostrovskii} follows.
    
    Consider now two maps $f\colon X\to Y$ and $g\colon Y\to X$. Define the correspondence $R=\gr f\cup(\gr g)^{-1}$, where, for a relation $S\subseteq Y\times X$, $S^{-1}=\{(x,y)\mid (y,x)\in S\}$. The distortion of $R$ is precisely $\max\{\dis f,\dis g,\codis(f,g)\}$, and so the inequality $(\leq)$ in \eqref{eq:Kalton_Ostrovskii} is shown.
\end{proof}

Let us conclude with a third characterization---Gromov's original definition. Let $(X,d)$ be a metric space. The {\em Hausdorff distance} between two subsets $A,B\subseteq X$ is defined as
\[
    d_H(A,B)=\newinf_{R\in\mathcal R(A,B)}\sup_{(a,b)\in R}d(a,b),
\]
intuitively, the maximum edge length in the best possible correspondence between $A$ and $B$.

Equivalently, this is the smallest $\varepsilon$ such that $\varepsilon$-thickening of $A$ contains $B$ and $\varepsilon$-thickening of $B$ contains $A$ (see Figure \ref{fig:Hausdorff}). Indeed, this is clearly the case if all edges of a correspondence have length at most $\varepsilon$, and on the other hand we consider the correspondence connecting each point of one set to the nearest neighbor(s) from the other set.

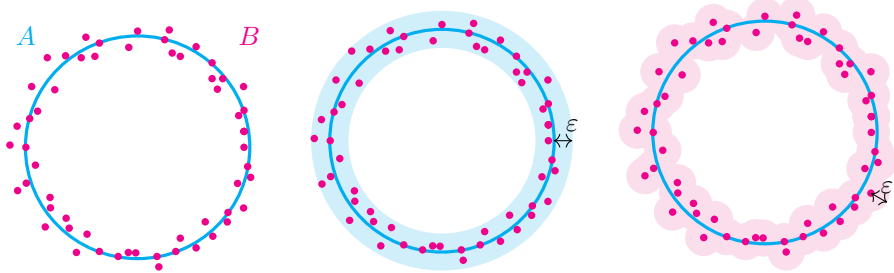
\begin{figure}[h!]
    \centering
    
	\begin{subfigure}{0.25\textwidth}
		\centering
		\begin{tikzpicture}[scale=0.7]
		\tikzstyle{blueshade} = [fill=cyan!20,fill opacity=0.4]
		\tikzstyle{redshade} = [fill=magenta!20,fill opacity=0.4]	
		
		\draw[very thick,cyan] (0,0) circle (60pt);
		\draw[cyan] (-2.1,2.1) node{$A$};
		\draw[magenta] (2.1,2.1) node{$B$}; 
		\fill[magenta] (30:2.3) circle (2pt) (60:2.2) circle (2pt) (65:1.9) circle (2pt) (70:1.9) circle (2pt) (75:2.3) circle (2pt) (76:2.1) circle (2pt) (90:2.2) circle (2pt) (95:1.9) circle (2pt) (110:2.1) circle (2pt) (115:2.3) circle (2pt) (115:1.9) circle (2pt) (122:2) circle (2pt) (128:2.2) circle (2pt) (135:2.4) circle (2pt) (145:1.9) circle (2pt) (150:2.3) circle (2pt) (160:2) circle (2pt) (165:2.1) circle (2pt) (170:2.3) circle (2pt) (180:2.1) circle (2pt) (179:2.4) circle (2pt) (190:1.95) circle (2pt) (198:2.2) circle (2pt) (200:2.4) circle (2pt) (210:1.9) circle (2pt) (215:2) circle (2pt) (221:2.3) circle (2pt) (225:1.9) circle (2pt) (230:2) circle (2pt) (240:1.9) circle (2pt) (240:2.3) circle (2pt) (250:2.1) circle (2pt) (260:2.1) circle (2pt) (265:2) circle (2pt) (269:2) circle (2pt) (280:2.1) circle (2pt) (280:2.3) circle (2pt) (290:2.1) circle (2pt) (295:1.9) circle (2pt) (300:2.2) circle (2pt) (310:2.2) circle (2pt) (312:1.9) circle (2pt) (320:2.2) circle (2pt) (324:2.1) circle (2pt) (330:2.3) circle (2pt) (340:2) circle (2pt) (345:2.2) circle (2pt) (350:2.1) circle (2pt)
		(2,0.3) circle (2pt) (2,0) circle (2pt) (1.9,0.6) circle (2pt) (2,0.7) circle (2pt) (1.5,1.1) circle (2pt) (1.6,1.3) circle (2pt) (1.4,1.3) circle (2pt) (1.4,1.6) circle (2pt) (2,0.3) circle (2pt);
	\end{tikzpicture}
\end{subfigure}
	\begin{subfigure}{0.25\textwidth}
		\centering
		\begin{tikzpicture}[scale=0.7]
			\tikzstyle{blueshade} = [fill=cyan!40,fill opacity=0.4]
			\tikzstyle{redshade} = [fill=magenta!20,fill opacity=0.4]	
			\fill [blueshade,even odd rule] (0,0) circle[radius=50pt] circle[radius=70pt];
			\draw[very thick,cyan] (0,0) circle (60pt);
			\fill[magenta] (30:2.3) circle (2pt) (60:2.2) circle (2pt) (65:1.9) circle (2pt) (70:1.9) circle (2pt) (75:2.3) circle (2pt) (76:2.1) circle (2pt) (90:2.2) circle (2pt) (95:1.9) circle (2pt) (110:2.1) circle (2pt) (115:2.3) circle (2pt) (115:1.9) circle (2pt) (122:2) circle (2pt) (128:2.2) circle (2pt) (135:2.4) circle (2pt) (145:1.9) circle (2pt) (150:2.3) circle (2pt) (160:2) circle (2pt) (165:2.1) circle (2pt) (170:2.3) circle (2pt) (180:2.1) circle (2pt) (179:2.4) circle (2pt) (190:1.95) circle (2pt) (198:2.2) circle (2pt) (200:2.4) circle (2pt) (210:1.9) circle (2pt) (215:2) circle (2pt) (221:2.3) circle (2pt) (225:1.9) circle (2pt) (230:2) circle (2pt) (240:1.9) circle (2pt) (240:2.3) circle (2pt) (250:2.1) circle (2pt) (260:2.1) circle (2pt) (265:2) circle (2pt) (269:2) circle (2pt) (280:2.1) circle (2pt) (280:2.3) circle (2pt) (290:2.1) circle (2pt) (295:1.9) circle (2pt) (300:2.2) circle (2pt) (310:2.2) circle (2pt) (312:1.9) circle (2pt) (320:2.2) circle (2pt) (324:2.1) circle (2pt) (330:2.3) circle (2pt) (340:2) circle (2pt) (345:2.2) circle (2pt) (350:2.1) circle (2pt)
			(2,0.3) circle (2pt) (2,0) circle (2pt) (1.9,0.6) circle (2pt) (2,0.7) circle (2pt) (1.5,1.1) circle (2pt) (1.6,1.3) circle (2pt) (1.4,1.3) circle (2pt) (1.4,1.6) circle (2pt) (2,0.3) circle (2pt);
			\draw[<->] (0:2.1)--(0:2.45) node[above]{$\varepsilon$};
		\end{tikzpicture}
	\end{subfigure}
\begin{subfigure}{0.25\textwidth}
		\centering
		\begin{tikzpicture}[scale=0.7]
			\tikzstyle{blueshade} = [fill=cyan!20,fill opacity=0.4]
			\tikzstyle{redshade} = [fill=magenta!40,fill opacity=0.4]	
			\fill[redshade] (2,0.3) circle (10pt) (2,0) circle (10pt) (1.9,0.6) circle (10pt) (2,0.7) circle (10pt) (1.5,1.1) circle (10pt) (1.6,1.3) circle (10pt) (1.4,1.3) circle (10pt) (1.4,1.6) circle (10pt) (2,0.3) circle (10pt) (2,0.3) circle (10pt) (30:2.3) circle (10pt) (60:2.2) circle (10pt) (65:1.9) circle (10pt) (70:1.9) circle (10pt) (75:2.3) circle (10pt) (76:2.1) circle (10pt) (90:2.2) circle (10pt) (95:1.9) circle (10pt) (110:2.1) circle (10pt) (115:2.3) circle (10pt) (115:1.9) circle (10pt) (122:2) circle (10pt) (128:2.2) circle (10pt) (135:2.4) circle (10pt) (145:1.9) circle (10pt) (150:2.3) circle (10pt) (160:2) circle (10pt) (165:2.1) circle (10pt) (170:2.3) circle (10pt) (180:2.1) circle (10pt) (179:2.4) circle (10pt) (190:1.95) circle (10pt) (198:2.2) circle (10pt) (200:2.4) circle (10pt) (210:1.9) circle (10pt) (215:2) circle (10pt) (221:2.3) circle (10pt) (225:1.9) circle (10pt) (230:2) circle (10pt) (240:1.9) circle (10pt) (240:2.3) circle (10pt) (250:2.1) circle (10pt) (260:2.1) circle (10pt) (265:2) circle (10pt) (269:2) circle (10pt) (280:2.1) circle (10pt) (280:2.3) circle (10pt) (290:2.1) circle (10pt) (295:1.9) circle (10pt) (300:2.2) circle (10pt) (310:2.2) circle (10pt) (312:1.9) circle (10pt) (320:2.2) circle (10pt) (324:2.1) circle (10pt) (330:2.3) circle (10pt) (340:2) circle (10pt) (345:2.2) circle (10pt) (350:2.1) circle (10pt);
			\draw[very thick,cyan] (0,0) circle (60pt);
			\fill[magenta] (30:2.3) circle (2pt) (60:2.2) circle (2pt) (65:1.9) circle (2pt) (70:1.9) circle (2pt) (75:2.3) circle (2pt) (76:2.1) circle (2pt) (90:2.2) circle (2pt) (95:1.9) circle (2pt) (110:2.1) circle (2pt) (115:2.3) circle (2pt) (115:1.9) circle (2pt) (122:2) circle (2pt) (128:2.2) circle (2pt) (135:2.4) circle (2pt) (145:1.9) circle (2pt) (150:2.3) circle (2pt) (160:2) circle (2pt) (165:2.1) circle (2pt) (170:2.3) circle (2pt) (180:2.1) circle (2pt) (179:2.4) circle (2pt) (190:1.95) circle (2pt) (198:2.2) circle (2pt) (200:2.4) circle (2pt) (210:1.9) circle (2pt) (215:2) circle (2pt) (221:2.3) circle (2pt) (225:1.9) circle (2pt) (230:2) circle (2pt) (240:1.9) circle (2pt) (240:2.3) circle (2pt) (250:2.1) circle (2pt) (260:2.1) circle (2pt) (265:2) circle (2pt) (269:2) circle (2pt) (280:2.1) circle (2pt) (280:2.3) circle (2pt) (290:2.1) circle (2pt) (295:1.9) circle (2pt) (300:2.2) circle (2pt) (310:2.2) circle (2pt) (312:1.9) circle (2pt) (320:2.2) circle (2pt) (324:2.1) circle (2pt) (330:2.3) circle (2pt) (340:2) circle (2pt) (345:2.2) circle (2pt) (350:2.1) circle (2pt)
			(2,0.3) circle (2pt) (2,0) circle (2pt) (1.9,0.6) circle (2pt) (2,0.7) circle (2pt) (1.5,1.1) circle (2pt) (1.6,1.3) circle (2pt) (1.4,1.3) circle (2pt) (1.4,1.6) circle (2pt) (2,0.3) circle (2pt);
			\draw[<->] (330:2.3)--(330:2.65) node[above]{$\varepsilon$};
		\end{tikzpicture}
	\end{subfigure}
	\caption{\footnotesize Hausdorff distance for two subsets of the plane: $A$, a circle, in \textit{cyan}, and $B$, a collection of dots, in \textit{magenta}. Since $A$ is contained in the union of balls centered in the points of $B$ with radius $\varepsilon$ (\textit{right}), and $B$ lies inside the union of balls centered in points of $A$ with the same radius (\textit{center}), their Hausdorff distance is at most $\varepsilon$.}\label{fig:Hausdorff}
\end{figure}

The Hausdorff distance provides another, more geometrical, characterization of the Gromov-Hausdorff distance. We recall the constructions used to prove the result below. For more details, we refer the interested reader to \cite{KalOst,ChoMem3}.
\begin{theorem}\label{thm:GH_via_embed}
    Let $(X,d_X)$ and $(Y,d_Y)$ be two metric spaces. Then, 
    \[
    \GH(X,Y)=\inf_{d\in\mathcal D(X,Y)}d_H(i_X(X),i_Y(Y)),
    \]
    where $i_X$ and $i_Y$ denote the canonical inclusions of $X$ and $Y$ into the disjoint union $X\sqcup Y$ and where $\mathcal D(X,Y)$ is the set of all {\em admissible metrics}, i.e., metrics $d$ on $X\sqcup Y$ such that $d|_{i_X(X)\times i_X(X)}=d_X$ and $d|_{i_Y(Y)\times i_Y(Y)}=d_Y$.
\end{theorem}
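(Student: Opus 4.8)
The plan is to prove the two inequalities separately, using the standard passages between correspondences on $X\times Y$ and admissible metrics on $X\sqcup Y$ (the constructions alluded to just before the statement).

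\emph{The inequality $\GH(X,Y)\le\inf_{d\in\mathcal D(X,Y)}d_H(i_X(X),i_Y(Y))$.} Fix $d\in\mathcal D(X,Y)$, put $r=d_H(i_X(X),i_Y(Y))$, and let $\varepsilon>0$. I would take the relation $R_\varepsilon=\{(x,y)\in X\times Y : d(i_X(x),i_Y(y))\le r+\varepsilon\}$. By the description of the Hausdorff distance recalled above, every $x\in X$ has some $y\in Y$ with $d(i_X(x),i_Y(y))\le r+\varepsilon$, and symmetrically, so $R_\varepsilon$ is a correspondence. For $(x,y),(x',y')\in R_\varepsilon$, the triangle inequality in $(X\sqcup Y,d)$ gives $|d_X(x,x')-d_Y(y,y')|=|d(i_X(x),i_X(x'))-d(i_Y(y),i_Y(y'))|\le d(i_X(x),i_Y(y))+d(i_X(x'),i_Y(y'))\le 2(r+\varepsilon)$, hence $\dis R_\varepsilon\le 2(r+\varepsilon)$ and $\GH(X,Y)\le r+\varepsilon$. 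Letting $\varepsilon\to0$ and then taking the infimum over $d\in\mathcal D(X,Y)$ proves this direction.

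\emph{The inequality $\inf_{d\in\mathcal D(X,Y)}d_H(i_X(X),i_Y(Y))\le\GH(X,Y)$.} Fix a correspondence $R\in\mathcal R(X,Y)$ and $\varepsilon>0$, and set $r=\tfrac12\dis R+\varepsilon$. Define $d$ on $(X\sqcup Y)^2$ to restrict to $d_X$ and $d_Y$ on the two factors and, for $x\in X$ and $y\in Y$, put $d(i_X(x),i_Y(y))=d(i_Y(y),i_X(x))=\inf_{(x',y')\in R}\bigl(d_X(x,x')+r+d_Y(y',y)\bigr)$. The crux is that $d\in\mathcal D(X,Y)$: positivity and the identity of indiscernibles follow since every cross-distance is $\ge r>0$, symmetry and the two restriction conditions are built in, and the triangle inequality reduces to a short case analysis according to how the three points distribute between the factors. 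The only cases that use more than the definition of $d$ as an infimum together with the triangle inequalities inside $X$ and $Y$ are those of a triple $x,x'\in X$ with middle point $y\in Y$ (and its mirror image): choosing near-optimal witnesses $(x'',y''),(x''',y''')\in R$ for the two cross-distances, the required inequality follows once we know $d_X(x'',x''')\le 2r+d_Y(y'',y''')$, which holds because $|d_X(x'',x''')-d_Y(y'',y''')|\le\dis R\le 2r$. With $d$ in hand: since $R$ is a correspondence, each $x\in X$ admits $y_x$ with $(x,y_x)\in R$, so $d(i_X(x),i_Y(y_x))\le d_X(x,x)+r+d_Y(y_x,y_x)=r$, and symmetrically every point of $Y$ lies within $r$ of $i_X(X)$ in $(X\sqcup Y,d)$; thus $d_H(i_X(X),i_Y(Y))\le r=\tfrac12\dis R+\varepsilon$. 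Taking the infimum over $R\in\mathcal R(X,Y)$ and letting $\varepsilon\to0$ gives the claim, and combining the two inequalities finishes the proof.

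The main obstacle is the verification that the glued function $d$ satisfies the triangle inequality — specifically the ``straddling'' cases, which are the only place the choice $r\ge\tfrac12\dis R$ is genuinely used. The perturbation $\varepsilon>0$ serves only to make $d$ a bona fide metric rather than a pseudometric (it rules out cross-distances equal to $0$); alternatively one could build a pseudometric with $\varepsilon=0$ and then pass to the metric quotient, but carrying $\varepsilon$ is notationally lighter.
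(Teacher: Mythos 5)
Your proposal is correct and follows essentially the same route as the paper: one direction converts an admissible metric into a near-optimal correspondence via the thickening description of the Hausdorff distance, and the other glues $d_X$ and $d_Y$ along a correspondence $R$ with cross-distances $\inf_{(x',y')\in R}(d_X(x,x')+r+d_Y(y',y))$, exactly the paper's $d_R$. Your only deviations are cosmetic but welcome: you spell out the triangle-inequality check (including the straddling case, which the paper leaves as "it can be checked") and you add the slack $\varepsilon$ to $r$ so that $d$ is a genuine metric even when $\dis R=0$, a degenerate case the paper sidesteps by assuming $\dis R=2\varepsilon>0$.
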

\begin{proof}
    Every correspondence $R\in\mathcal R(i_X(X),i_Y(Y))$ trivially induces a correspondence $R'$ between $X$ and $Y$. Furthermore, $\dis R'\leq 2\sup_{(x,y)\in R}d(i_X(x),i_Y(y))$ for any $d \in \mathcal D(X,Y)$. Therefore, the inequality $(\leq)$ holds.

    On the other hand, let $R$ be a correspondence between $X$ and $Y$ with distortion $2\varepsilon>0$. Construct an admissible metric $d_R$ on $X\sqcup Y$ as follows:
    \[
    d_R(z,z')=\begin{cases}
        \begin{aligned}
            &d_X(z,z') &\text{if $z,z'\in X$,}\\
            &d_Y(z,z') &\text{if $z,z'\in Y$,}\\
            &\inf_{(x,y)\in R}(d_X(z,x)+\varepsilon+d_Y(y,z')) &\text{if $z\in X$ and $z'\in Y$.}
        \end{aligned}
    \end{cases}
    \]
    Since $\dis R=2\varepsilon$, it can be checked that $d_R$ is a metric and, by construction, it is compatible.
    The~inequality $(\geq)$ follows as, clearly, $d_H(X,Y) = \varepsilon$.
\end{proof}

Let us state some known properties of the Gromov-Hausdorff distance.

\begin{proposition}[\cite{ChoMem_geo}]\label{prop:distortion_minimizing_correspondence}
    Given two compact metric spaces, $X$ and $Y$, there exists a correspondence $R\in\mathcal R(X,Y)$ such that $\GH(X,Y)=\frac{1}{2}\dis R.$
\end{proposition}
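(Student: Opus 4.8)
The plan is to exhibit the space $\mathcal R(X,Y)$ of correspondences as a compact subset of a suitable topological space on which $\dis$ is lower semicontinuous, so that the infimum defining $\GH$ is attained. The natural choice is to view each correspondence $R\subseteq X\times Y$ as a nonempty closed subset of the compact metric space $X\times Y$ and to topologize the collection of such subsets by the Hausdorff metric; by Blaschke's selection theorem, the hyperspace of nonempty closed subsets of a compact metric space is itself compact. First I would show that we may restrict attention to \emph{closed} correspondences without changing the infimum: given any correspondence $R$, its closure $\overline R$ is again a correspondence (the projections $X\times Y\to X$ and $\to Y$ are closed maps since $X,Y$ are compact, so surjectivity of the projections is preserved) and $\dis \overline R=\dis R$, because the function $(p,q)\mapsto |d_X(x,x')-d_Y(y,y')|$ with $p=(x,y)$, $q=(x',y')$ is continuous on $(X\times Y)^2$ and hence its supremum over $R\times R$ equals its supremum over the closure.

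Next I would check that the set $\mathcal C(X,Y)$ of closed correspondences is a \emph{closed} subset of the hyperspace $(\mathcal K(X\times Y),d_H)$ of nonempty compact subsets. If $R_n\to R_\infty$ in the Hausdorff metric and each $R_n$ is a correspondence, then for any $x\in X$ one picks $y_n$ with $(x,y_n)\in R_n$; passing to a subsequence $y_n\to y$ and using Hausdorff convergence gives $(x,y)\in R_\infty$, so the projection of $R_\infty$ to $X$ is onto, and symmetrically for $Y$. Thus $\mathcal C(X,Y)$ is compact. Then I would show $R\mapsto \dis R$ is lower semicontinuous on $\mathcal C(X,Y)$: if $\dis R_\infty > c$, there are $p,q\in R_\infty$ with $|d_X(x,x')-d_Y(y,y')|>c$, and by Hausdorff convergence we can find $p_n,q_n\in R_n$ converging to $p,q$, whence $\dis R_n>c$ for large $n$ by continuity of the defining function. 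A lower semicontinuous function on a nonempty compact space attains its infimum, giving the desired $R$ with $\GH(X,Y)=\tfrac12\dis R$.

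The main obstacle I anticipate is making sure the hyperspace/Blaschke machinery is set up correctly and that "correspondence" is preserved under Hausdorff limits — this is exactly where compactness of $X$ and $Y$ is essential, since it is what lets the sequences $y_n$ subconverge. Everything else (closure of a correspondence is a correspondence, distortion is unchanged by closure, lower semicontinuity) is a routine continuity/compactness argument once the framework is in place. An alternative, perhaps shorter, route avoiding hyperspace language: take a sequence $R_n$ of correspondences with $\dis R_n\to 2\GH(X,Y)$, associate to $R_n$ the admissible metric $d_{R_n}$ on $X\sqcup Y$ as in the proof of Theorem \ref{thm:GH_via_embed}, observe these metrics are uniformly bounded and equicontinuous, extract a pointwise-convergent subsequence to an admissible pseudometric $d_\infty$, and read off a limiting correspondence from the pairs at $d_\infty$-distance $\le \GH(X,Y)$; I would mention this as a remark but carry out the hyperspace argument as the main proof.
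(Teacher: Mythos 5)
Your proposal is correct and follows essentially the same route the paper takes: the paper itself only cites \cite{ChoMem_geo} for this proposition, but when it generalizes the statement to chromatic metric pairs (Proposition~\ref{prop:correspondence_minimizing_distortion_metric_pairs}) it uses exactly your machinery --- replacing a minimizing sequence of correspondences by their closures without changing distortion (Lemma~\ref{lemma:closure_same_distortion}), invoking Blaschke's theorem for compactness of the hyperspace under the Hausdorff metric, and checking that a Hausdorff limit of closed correspondences is again a correspondence (Lemma~\ref{lemma:pair_correspondences_converge_to_pair_correspondences}). Your lower-semicontinuity argument for $\dis$ is all that is needed to conclude, so no further comparison is warranted.
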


Let us recall that a map $f\colon(X,d_X)\to(Y,d_Y)$ between metric spaces is an {\em isometric embedding} if  $d_Y(f(x),f(x'))=d_X(x,x')$ for every $x,x'\in X$. If it is also bijective, then it is called an {\em isometry}. If such $f$ exists, $X$ and $Y$ are said to be {\em isometric}.

\begin{corollary}\label{coro:0-GH}
    
    Two compact metric spaces, $X$ and $Y$, are isometric if and only if $\GH(X,Y)=0$.
\end{corollary}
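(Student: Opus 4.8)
The plan is to derive Corollary~\ref{coro:0-GH} from Proposition~\ref{prop:distortion_minimizing_correspondence} together with the elementary observations about isometric embeddings already recorded. One direction is immediate: if $X$ and $Y$ are isometric, an isometry $f\colon X\to Y$ has graph of distortion $0$, and $\gr f$ is a correspondence precisely because $f$ is bijective, so $\GH(X,Y)\le\frac12\dis(\gr f)=0$, and nonnegativity gives equality. (Alternatively one can invoke Theorem~\ref{thm:Kalton_Ostrovskii} with $f$ and $g=f^{-1}$.)

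For the converse, suppose $\GH(X,Y)=0$ with $X,Y$ compact. By Proposition~\ref{prop:distortion_minimizing_correspondence} there is a correspondence $R\in\mathcal R(X,Y)$ with $\dis R=2\GH(X,Y)=0$. Vanishing distortion means $d_X(x,x')=d_Y(y,y')$ whenever $(x,y),(x',y')\in R$. The first step is to extract from $R$ a map: pick for each $x\in X$ some $f(x)\in Y$ with $(x,f(x))\in R$ (this uses that $R$ is a correspondence). Since $\dis R=0$, for all $x,x'\in X$ we get $d_Y(f(x),f(x'))=d_X(x,x')$, so $f$ is an isometric embedding; in particular $f$ is injective. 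By the same argument, choosing $g(y)$ with $(g(y),y)\in R$ yields an isometric embedding $g\colon Y\to X$. The remaining task is to upgrade one of these embeddings to a surjection.

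The standard way to do this is a compactness argument: a compact metric space does not admit a proper isometric self-embedding. Concretely, $g\circ f\colon X\to X$ is an isometric embedding of the compact space $X$ into itself, and one shows such a map must be surjective — otherwise, picking $p\notin (g\circ f)(X)$ with $\dist(p,(g\circ f)(X))=\delta>0$ (the image is compact, hence closed) and iterating $g\circ f$ on $p$ produces an infinite sequence of points pairwise at distance $\ge\delta$, contradicting sequential compactness. Hence $g\circ f$ is onto, which forces $g$ to be onto, so $g$ is a surjective isometric embedding, i.e.\ an isometry, and $X$ and $Y$ are isometric.

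The main obstacle — really the only non-formal point — is this last surjectivity step; everything else is bookkeeping with the definition of distortion. One should be slightly careful that the ``no proper isometric self-embedding of a compact space'' lemma is stated for the composite $g\circ f$ (and symmetrically $f\circ g$), and then deduce surjectivity of the individual maps $f$ and $g$ from surjectivity of the composites. An alternative that avoids invoking Proposition~\ref{prop:distortion_minimizing_correspondence} directly would be to take a sequence of correspondences $R_n$ with $\dis R_n\to0$, build maps $f_n$, and pass to a limit using the Arzel\`a--Ascoli theorem (the $f_n$ are $1$-Lipschitz up to an error $\dis R_n$, hence equicontinuous) on a countable dense subset; but since the distortion-minimizing correspondence is already available, the argument above is cleaner.
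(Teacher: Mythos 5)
Your argument is correct, and it follows the paper's skeleton up to the last step: both directions start the same way, and the converse begins, as in the paper, by invoking Proposition~\ref{prop:distortion_minimizing_correspondence} to produce a correspondence $R$ with $\dis R=0$. Where you diverge is in how you extract an isometry from $R$. The paper simply observes that $R$ itself must already be the graph of an isometry: if $(x,y),(x,y')\in R$, then $\lvert d_X(x,x)-d_Y(y,y')\rvert\leq\dis R=0$ forces $y=y'$, and symmetrically for the other coordinate, so $R$ is single-valued in both directions, hence a bijection preserving distances. You instead make arbitrary selections $f$ and $g$ out of $R$, obtaining only isometric embeddings, and then restore surjectivity via the separate (and genuinely nontrivial) lemma that a compact metric space admits no proper isometric self-embedding, applied to $g\circ f$. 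Your route is valid --- the separation argument with $\delta=\dist(p,(g\circ f)(X))>0$ and the iterates $(g\circ f)^n(p)$ is the standard proof of that lemma, and surjectivity of $g\circ f$ does imply surjectivity of $g$ --- but it imports a second compactness argument where none is needed: once Proposition~\ref{prop:distortion_minimizing_correspondence} has been used, the conclusion is purely formal. The one thing your detour buys is that the self-embedding lemma is reusable in contexts where one only has embeddings in both directions rather than a single zero-distortion correspondence; for this particular corollary, though, the paper's one-line observation is the cleaner finish.
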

\begin{proof}
    If $X$ and $Y$ are isometric, then trivially $\GH(X,Y)=0$---the graph of the isometry is a correspondence with distortion $0$. 
    If $\GH(X,Y)=0$, then Proposition \ref{prop:distortion_minimizing_correspondence} implies the existence of a correspondence $R$ with $\dis R=0$. It is easy to check that $R$ itself must then be a graph of an isometry.
    
\end{proof}

In general, computing the Gromov-Hausdorff distance is an NP-hard problem, and so is approximating it by a factor of $3$ for finite trees with unit-length edges \cite{AgaFoxNatSidWan,Sch}.
It is, therefore, convenient to find various tractable stable invariants bounding it.
Below we recall a 
simple, but important example. For more, see, e.g., \cite{Mem12}.
\begin{proposition}\label{prop:GH_and_diam}
    Let $X$ and $Y$ be two metric spaces with finite diameter. Then,
    \[
    \frac{\lvert\diam X-\diam Y\rvert}{2}\leq \GH(X,Y)\leq \frac{\max\{\diam X,\diam Y\}}{2}.
    \]
\end{proposition}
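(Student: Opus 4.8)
The plan is to prove both inequalities by exhibiting suitable correspondences and using Definition~\ref{def:GH-distance}.

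\textbf{Upper bound.} First I would bound $\GH(X,Y)$ by using the complete correspondence $R = X \times Y$, which is clearly an element of $\mathcal R(X,Y)$. For this choice, $\dis R = \sup_{(x,y),(x',y') \in X \times Y} |d_X(x,x') - d_Y(y,y')|$. For any such quadruple, $d_X(x,x') \le \diam X$ and $d_Y(y,y') \ge 0$, so $d_X(x,x') - d_Y(y,y') \le \diam X$; symmetrically $d_Y(y,y') - d_X(x,x') \le \diam Y$. Hence $|d_X(x,x') - d_Y(y,y')| \le \max\{\diam X, \diam Y\}$, and taking the supremum gives $\dis R \le \max\{\diam X, \diam Y\}$. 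Therefore $\GH(X,Y) \le \tfrac12 \dis R \le \tfrac12 \max\{\diam X, \diam Y\}$.

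\textbf{Lower bound.} For this direction I would argue that \emph{every} correspondence $R \in \mathcal R(X,Y)$ satisfies $\dis R \ge |\diam X - \diam Y|$, which then yields the bound after taking the infimum and dividing by $2$. Without loss of generality assume $\diam X \ge \diam Y$. Fix $\eta > 0$ and pick $x, x' \in X$ with $d_X(x,x') > \diam X - \eta$. Since $R$ is a correspondence, choose $y, y' \in Y$ with $(x,y),(x',y') \in R$. Then $\dis R \ge |d_X(x,x') - d_Y(y,y')| \ge d_X(x,x') - d_Y(y,y') \ge (\diam X - \eta) - \diam Y$. Letting $\eta \to 0$ gives $\dis R \ge \diam X - \diam Y = |\diam X - \diam Y|$. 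Taking the infimum over $R$ and multiplying by $\tfrac12$ completes the proof. (If some diameter is attained one can skip the $\eta$; the finite-diameter hypothesis is exactly what makes these suprema meaningful and finite.)

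\textbf{Main obstacle.} There is essentially no obstacle here---the argument is a direct unwinding of the definitions. The only minor subtlety is handling the case where the diameter of $X$ is a supremum that is not attained, which the $\eta$-argument above dispatches cleanly; and of course one must keep track of which of $\diam X$, $\diam Y$ is larger, but since the claimed bounds are symmetric in $X$ and $Y$ the "without loss of generality" reduction is legitimate.
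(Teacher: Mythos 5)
Your argument is correct, and it is the standard proof of this bound: the paper itself states Proposition~\ref{prop:GH_and_diam} without proof, citing \cite{Mem12}, where essentially the same two steps appear (the full correspondence $X\times Y$ for the upper bound, and the observation that any correspondence has distortion at least $\lvert\diam X-\diam Y\rvert$ for the lower bound). Both directions are handled cleanly, including the non-attained-supremum case via the $\eta$-argument, so there is nothing to add.
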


\section{Chromatic metric pairs and $C$-constrained maps}\label{sec:chromatic_metric_pairs}

A {\em metric pair} is defined as a pair $(A,X)$ consisting of a metric space $A$ and a subset $X$ thereof. A~map $f\colon A_1\to A_2$ between metric spaces is a {\em pair map} between two metric pairs $(A_1,X_1)$ and $(A_2,X_2)$ if $f(X_1)\subseteq X_2$. It is a \emph{pair isomorphism} if it is an isometry and both $f$ and $f^{-1}$ are pair maps; equivalently, it is an isometry and $f(X_1) = X_2$. 
Inspired by the study of the Gromov-Hausdorff distance on pairs of metric spaces in \cite{GomChe}, in Section \ref{sec:C-constrained_GH}, we provide a similar notion for metric pairs with a coloring on the subspace. Although our definition gives a closely related notion of distance for metric pairs, we took a different approach.
Indeed, starting from Kalton and Ostrovkii's characterization of the usual Gromov-Hausdorff distance (Theorem \ref{thm:Kalton_Ostrovskii}), we restrict the family of maps we are allowed to use to minimize the distortion and codistortion.

The main object of study in this paper is akin to metric pairs, but instead of a single subspace, we consider several disjoint subspaces. We describe this as certain points having colors, which we denote by natural numbers.

\begin{definition}\label{def:chromatic-metric-pair}
    We define a \emph{chromatic metric pair} as a pair $\MP{}$, where $A$ is a metric space and $\chi\colon X\rightarrow \N$ a coloring of its 
    subspace $X\subseteq A$.
    When we need to specify the domain of $\chi$, we write $\extMP{}$. If $A=X$, the pair is a {\em chromatic metric space}, and we also denote it simply by $\chi$.
    We say that a point $x\in X$ is \emph{colored by} $\sigma\subseteq\N$, or \emph{$\sigma$-colored}, if $\chi(x)\in\sigma$.
\end{definition}

Given a chromatic metric pair $\extMP{}$, any subset $Y$ of $A$ forms a \emph{subspace} endowed with the chromatic metric pair structure $(Y,\chi|_{Y\cap X})$. See Figure~\ref{fig:cmp_and_subspace} for an example.

\begin{figure}[h!]
    \centering
    \begin{subfigure}{0.48\textwidth}
    \centering
        \begin{tikzpicture}[scale=1.2]
            \fill[lightgray] (0,0) circle (35pt);
            \fill[magenta] (0,0.5) circle (13pt);
            \fill[lightgray] (0,0.5) circle (10pt);
            \fill[cyan] (0,-0.5) circle (13pt);
            \fill[lightgray] (0,-0.5) circle (10pt);
            \draw (0,0) circle (35pt);
            \draw (0,0.5) circle (13pt);
            \draw (0,0.5) circle (10pt);
            \draw (0,-0.5) circle (13pt);
            \draw (0,-0.5) circle (10pt);            \draw (1.05,1.3) node{$X\xrightarrow{\chi}\N$};
            \draw (-1.6,-1.6)--(1.6,-1.6)--(1.6,1.6)--(-1.6,1.6)--(-1.6,-1.6);
            \draw (-1.7,1.3) node[left]{$A$};
            \draw[dashed] (-1.4,-1.4)--(0,-1.4)--(0,1.4)--(-1.4,1.4)--(-1.4,-1.4);
            \draw (-1.25,-1.25) node{$Y$};
        \end{tikzpicture}
    \end{subfigure}
    \begin{subfigure}{0.48\textwidth}
            \centering
            \begin{tikzpicture}[scale=1.2]
            \fill[lightgray] (0,0) circle (35pt);
            \fill[magenta] (0,0.5) circle (13pt);
            \fill[lightgray] (0,0.5) circle (10pt);
            \fill[cyan] (0,-0.5) circle (13pt);
            \fill[lightgray] (0,-0.5) circle (10pt);
            \draw (0,0) circle (35pt);
            \draw (0,0.5) circle (13pt);
            \draw (0,0.5) circle (10pt);
            \draw (0,-0.5) circle (13pt);
            \draw (0,-0.5) circle (10pt);            
            \draw[lightgray] (-1.6,-1.6)--(1.6,-1.6)--(1.6,1.6)--(-1.6,1.6)--(-1.6,-1.6);            
            \draw (-1.4,-1.4)--(0,-1.4)--(0,1.4)--(-1.4,1.4)--(-1.4,-1.4);
            \draw (-1.25,-1.25) node{$Y$};
            \fill[white,opacity=0.8] (-1.4,-1.6)--(-1.4,1.4)--(0,1.4)--(0,-1.4)--(-1.4,-1.4)--(-1.4,-1.6)--(1.6,-1.6)--(1.6,1.6)--(-1.6,1.6)--(-1.6,-1.6)--(-1.4,-1.6);
            \draw (1.1,1.3) node{$X\cap Y\xrightarrow{\chi|_{X\cap Y}}\N$};
        \end{tikzpicture}
    \end{subfigure}  
    \caption{\footnotesize On the left-hand side, an example of a chromatic metric pair $\MP{}$ and a subset $Y$ of $A$. On the right-hand side, we highlight the chromatic metric pair structure on $Y$ induced by $(A, \chi)$.}
    \label{fig:cmp_and_subspace}
\end{figure}
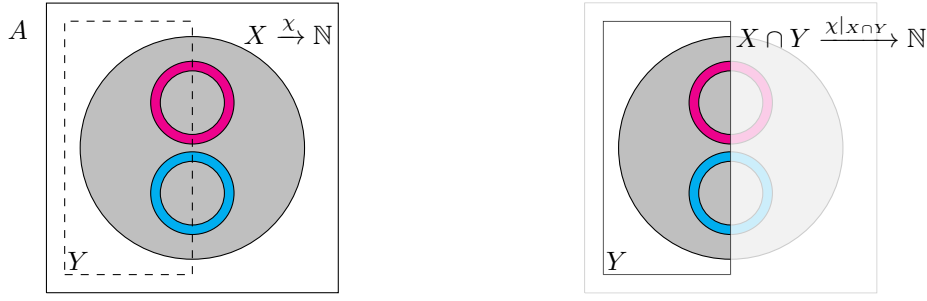

\begin{definition}
    Let $\sigma$ be a subset of $\N$. We say that a map $f\colon\MP{1}\to\MP{2}$ between chromatic metric pairs is {\em $\sigma$-constrained} if 
    \[
        f(\chi_1^{-1}(\sigma))\subseteq\chi_2^{-1}(\sigma),
    \]
    that is, if a point $x\in X_1$ is $\sigma$-colored, then also $f(x) \in X_2$ is $\sigma$-colored. For a family $C$ of subsets of $\N$, we say that $f$ is {\em $C$-constrained} if it is $\sigma$-constrained for every $\sigma\in C$.

    Given two constraint sets $\N\in C_1,C_2\subseteq\mathcal P(\N)$, we say that $C_1$ is {\em stronger than} $C_2$ (or $C_2$ is {\em weaker than} $C_1$), if every $C_1$-constrained map between chromatic metric pairs is necessarily $C_2$-constrained. We denote this fact by $C_1\succeq C_2$.
\end{definition}

Note that a map $f\colon\extMP{1}\to\extMP{2}$ between chromatic metric pairs is $\sigma$-constrained if and only if it is a pair map between $(A_1, \chi_1^{-1}(\sigma))$ and $(A_2, \chi_2^{-1}(\sigma))$. In particular, $f$ is $\{\N\}$-constrained if and only if $f(X_1)\subseteq X_2$.

\begin{remark}\label{remark:always-include-N}
    Even though by definition we do not require the maps between chromatic metric pairs to be pair maps, in the rest of the paper, we always require $\N$ to be part of any constraint set $C$, which has the same effect. We could alternatively
    also assume that $\bigcup C = \N$.
    This fact and our choice are further justified by the results in Section~\S\ref{sub:constraints_vs_topologies}. This is a matter of convenience rather than substance, we could equivalently lift any constraints on $C$ and insist that maps between chromatic metric pairs be pair maps.
\end{remark}

\begin{definition}
	Let $\N\in C\subseteq\mathcal P(\N)$ be a constraint set. A map $f\colon\MP{1}\to\MP{2}$ between chromatic metric pairs is said to be a {\em $C$-constrained isomorphism} if it is an isometry and both $f$ and $f^{-1}$ are $C$-constrained; equivalently, if $f\colon A_1\to A_2$ is an isometry such that, for every $\sigma\in C$, $f(\chi_1^{-1}(\sigma))=\chi_2^{-1}(\sigma)$. In this case, we say that $\MP{1}$ and $\MP{2}$ are {\em $C$-constrainedly isomorphic}. 
	
    Furthermore, a map $g\colon\MP{1}\to\MP{2}$ is a {\em $C$-constrained isomorphic embedding} if the corestriction to the image subspace,
    $g\colon (A_1, \chi_1)\to (g(A_1), \chi_2|_{X_2\cap g(A_1)})$, is a $C$-constrained isomorphism.
    
\end{definition}

Note that the composition of $C$-constrained chromatic metric pair maps is still $C$-constrained and the identity is always a $C$-constrained isomorphism for any $C\subseteq \mathcal{P}(\N)$.

\begin{remark}\label{rem:from_metric_to_chromatic_metric_pairs}
    A metric space $A$ can also be viewed as a metric pair $(A,A)$, and a metric pair $(A,X)$ can also be viewed as a chromatic metric pair $(A, 0_X)$, where $0_X\colon X\rightarrow\N$ is the constant zero map. 
    With these identifications, it is equivalent to say that two metric spaces are isometric, that the two corresponding metric pairs are pair isomorphic, and that the two corresponding chromatic metric pairs are $C$-constrained isomorphic for any choice of $C$; and analogously for two arbitrary metric pairs and their corresponding chromatic metric pairs.
\end{remark}

\begin{example}\label{ex:trivial_and_discrete_constraint_sets}
    Let $\extMP{1}$, $\extMP{2}$ be two chromatic metric pairs.
    \begin{compactenum}[(i)]
        \item As we always insist that $\N$ be in a constraint set, the minimal constraint set, which we call {\em trivial}, is $C_T=\{\N\}$. It is the weakest constraint set; it only forces the points of $X_1$ to be sent into $X_2$. 
        
        \item The set of all possible constraints, $C_D=\mathcal P(\N)$, is the strongest. We call it the {\em discrete} constraint set as it forces every color to be preserved. More precisely, a map $f\colon\MP{1}\to\MP{2}$ is $C_D$-constrained if and only if $\chi_2(f(x))=\chi_1(x)$ for every $x\in X_1$. Therefore,
        such map
        exists if and only if $\im\chi_1\subseteq\im\chi_2$,
        and, furthermore, 
        satisfies $\im\chi_1=\im(\chi_2|_{f(X_1)})$.
    \end{compactenum}
\end{example}

Trivially, if $C_1\supseteq C_2$, then $C_1$ is stronger than $C_2$. However, the converse implication does not hold in general. Consider, for example, the discrete constraint set and the set of all singletons, $C=\{\{n\}\mid n\in\N\}$. Both force every color to be fixed, so they have the same strength. 
Not so obviously, the same holds also for the cofinite constraint set $C^\prime=\{\sigma\subseteq\N\mid \N\setminus\sigma\text{ is finite}\}$. This relation is investigated in depth in the following subsection, \S\ref{sub:constraints_vs_topologies}. 

Before moving on, we conclude with an example where we discuss some of the notions introduced above.
\begin{example}\label{ex:examples_of_C-constrained_maps}
Let us consider four chromatic metric spaces, represented in Figure \ref{fig:example_cmp}. We discuss the existence of $C$-constrained maps between them for different choices of the constraint set $C$. Let $X$ be a disk in the plane with the Euclidean metric. Fix two disjoint annuli $Y$ and $Z$ of the same size contained in $X$. We consider four different coloring functions $\chi_1,\chi_2,\chi_3,\chi_4\colon X\to\N$ defined as follows: 
\[
\chi_1\colon X\mapsto 0,\quad\chi_2\colon\begin{cases}
    \begin{aligned}  Y\,&\mapsto 1\\
    X\setminus Y\,&\mapsto 0,
    \end{aligned}
\end{cases}\quad\chi_3\colon \begin{cases}\begin{aligned}  Y\,&\mapsto 1\\
Z\,&\mapsto 2\\
     X\setminus (Y\cup Z)\,&\mapsto 0,
    \end{aligned}\end{cases} \quad \text{and}\quad \chi_4\colon X\mapsto 2
\]
(in Figure \ref{fig:example_cmp}, the colors $0$, $1$ and $2$ are represented in gray, magenta and cyan, respectively).
    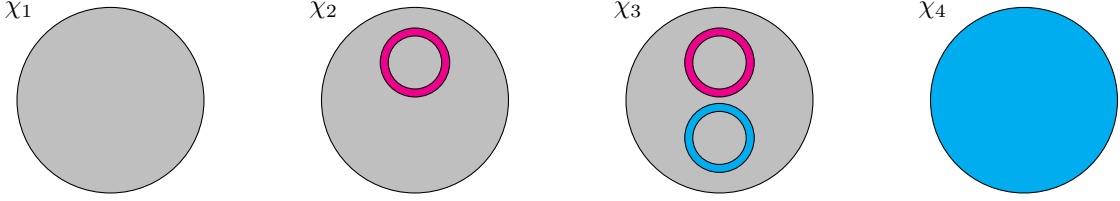
\begin{figure}[h!]
    \centering
    \begin{subfigure}{0.245\textwidth}
        \begin{tikzpicture}
        \centering
            \fill[lightgray] (0,0) circle (35pt);
            \draw (0,0) circle (35pt);
            \draw (-1.2,1.2) node{$\chi_1$};
        \end{tikzpicture}
    \end{subfigure}    
        \begin{subfigure}{0.245\textwidth}
        \begin{tikzpicture}
        \centering
            \fill[lightgray] (0,0) circle (35pt);
            \fill[magenta] (0,0.5) circle (13pt);
            \fill[lightgray] (0,0.5) circle (10pt);
            \draw (0,0) circle (35pt);
            \draw (0,0.5) circle (13pt);
            \draw (0,0.5) circle (10pt);
            \draw (-1.2,1.2) node{$\chi_2$};
        \end{tikzpicture}
    \end{subfigure}  
        \begin{subfigure}{0.245\textwidth}
        \begin{tikzpicture}
        \centering
            \fill[lightgray] (0,0) circle (35pt);
            \fill[magenta] (0,0.5) circle (13pt);
            \fill[lightgray] (0,0.5) circle (10pt);
            \fill[cyan] (0,-0.5) circle (13pt);
            \fill[lightgray] (0,-0.5) circle (10pt);
            \draw (0,0) circle (35pt);
            \draw (0,0.5) circle (13pt);
            \draw (0,0.5) circle (10pt);
            \draw (0,-0.5) circle (13pt);
            \draw (0,-0.5) circle (10pt);            \draw (-1.2,1.2) node{$\chi_3$};
        \end{tikzpicture}
    \end{subfigure}  
        \begin{subfigure}{0.245\textwidth}
        \begin{tikzpicture}
        \centering
            \fill[cyan] (0,0) circle (35pt);            \draw (0,0) circle (35pt);            
            \draw (-1.2,1.2) node{$\chi_4$};
        \end{tikzpicture}
    \end{subfigure} 
    \caption{\footnotesize A representation of the four chromatic metric spaces described in Example \ref{ex:examples_of_C-constrained_maps}.}
    \end{figure}\label{fig:example_cmp}

    Let us consider four different constraint sets: the trivial constraint set $C_T=\{\N\}$, the discrete constraint set  $C_D=\mathcal{P}(\N)$,
    $C_1=\{\{0\},\{0,1\},\{0,2\},\N\}$ and $C_2=\{\{1,2\},\N\}$. 
    
    With respect to the trivial constraint set $C_T$, all four chromatic metric spaces 
    are isomorphic (see Example \ref{ex:trivial_and_discrete_constraint_sets}(i)). 

    On the opposite side of the strength spectrum, for the discrete constraint set $C_D$, they are pairwise non-isomorphic since they use different sets of colors. Using the properties discussed in Example \ref{ex:trivial_and_discrete_constraint_sets}(ii), we can see that there are $C_D$-constrained maps only from $\chi_1$ to $\chi_2$ and $\chi_3$, from $\chi_2$ to $\chi_3$, and from $\chi_4$ to $\chi_3$. 

    More interesting are the impacts of $C_1$ and $C_2$.
    We discuss the case of $C_1$ in more detail.
    Since $C_D\succeq C_1$, the same maps mentioned above are $C_1$-constrained. The only difference is that a map from $\chi_4$ to $\chi_3$ can map $X$ into $X\setminus Y$, not just into $Z$ as in the case of $C_D$, relaxing the constraint. The identity maps from $\chi_2$, $\chi_3$ and $\chi_4$ to $\chi_1$, and from $\chi_3$ to $\chi_2$ are $C_1$-constrained maps. Furthermore, there are $C_1$-constrained maps from $\chi_4$ to $\chi_2$. They need to send $X$ into $X\setminus Y$.
    
    The following four diagrams encode the impact of the four different constraint sets. For each constraint set, we draw an arrow ($\to$) if there exists a constrained map, a hooked arrow ($\hookrightarrow$) if the 
    identity 
    is a constrained map, and an arrow pointing at both directions with a $\simeq$-symbol on top ($\xleftrightarrow{\simeq}$) if they are constrainedly isomorphic.
    \begin{gather*}
	\xymatrix{
        C_T  & \chi_3\ar@{<->}^{\simeq}[d]\ar@{<->}@/^1.5pc/^{\simeq}[ddr]\ar@{<->}@/_1.5pc/_{\simeq}[ddl] & \\
         & \chi_4\ar@{<->}_{\simeq}[dr]\ar@{<->}^{\simeq}[dl] &\\
         \chi_1\ar@{<->}@/_1.5pc/_{\simeq}[rr] & & \chi_2
    }
    \quad
    \xymatrix{
        C_D  & \chi_3 & \\
         & \chi_4\ar[u] &\\
         \chi_1\ar@/_1.5pc/[rr]\ar@/^1.5pc/[uur] & & \chi_2\ar@/_1.5pc/[uul]
    }
    \quad
    \xymatrix{
        C_1  & \chi_3\ar@{^{(}->}@/_1.5pc/[ddl]\ar@{^{(}->}@/^1.5pc/[ddr] & \\
         & \chi_4\ar@{^{(}->}[dl]\ar[u]\ar[dr] &\\
         \chi_1\ar[rr]\ar[uur] & & \chi_2\ar@{^{(}->}@/^1.5pc/[ll]\ar[uul]
    }
    \quad
    \xymatrix{
        C_2  & \chi_3\ar@/^/[ddr]\ar@{^{(}->}@/_/[d] & \\
         & \chi_4\ar@/_/[u]\ar@/^/[dr] &\\
         \chi_1\ar@{^{(}->}@/_1.5pc/[rr]\ar@{^{(}->}[ur]\ar@{^{(}->}@/^1.5pc/[uur] & & \chi_2\ar@{^{(}->}@/_1.5pc/[uul]\ar@{^{(}->}@/^/[ul]
    }
    \end{gather*}
\end{example}

\subsection{The strength of constraint sets and Alexandrov topologies}\label{sub:constraints_vs_topologies}

Chromatic constraints can be viewed as topologies on the set of all colors, $\N$. In this section, we describe this relation in detail, and show how the strength of different sets of constraints can be compared.

Let $C\subseteq\mathcal P(\N)$. For every $n\in\N$, we define
\[
    C_n=\{\sigma\in C\mid n\in\sigma\},\quad\text{and}\quad \sigma_n=\bigcap C_n.
\]

Note that, if $C_n=\emptyset$, then $\sigma_n=\N$. Let us enlist a few immediate but crucial properties of these objects.

\begin{fact}\label{fact:C_n_and_sigma_n}
    Let $C\subseteq\mathcal P(\N)$. Then, the following properties hold:
    \begin{compactenum}[(i)]
        \item  For every $n\in\N$, $n\in\sigma_n$ and so the family $\Sigma_C=\{\sigma_n\}_{n\in\N}$ covers $\N$.
        \item For every $n,m\in\N$, if $m\in\sigma_n$, then $\sigma_m\subseteq\sigma_n$.
        \item For every $\tau\in C$, if $n\in\tau$, then $\sigma_n\subseteq\tau$.
        \item For every $\tau\in C$, $\tau=\bigcup_{n\in\tau}\sigma_n$.
    \end{compactenum}
\end{fact}
\begin{proof}
Item (i) is immediate, and item (ii) follows from the inclusion $C_m\supseteq C_n$. Since $\tau\in C_n$, item (iii) is trivial. Finally, items (i) and 
(iii) imply item (iv).
\end{proof}

Building on those basic facts, we discuss the topology generated by $\Sigma_C$, and observe that it is the Alexandrov topology generated by $C$.

\begin{proposition}\label{prop:T_C}
    Let $C\subseteq\mathcal P(\N)$. 
    \begin{compactenum}[(i)]
        \item The family $\Sigma_C=\{\sigma_n\}_{n\in\N}$ is a base of a topology $\Tau_C$ on $\N$.
        \item For every subset $U\subseteq\N$, $U\in\Tau_C$ if and only if it can be written as the particular union
        \[
            U=\bigcup_{n\in U}\sigma_n.
        \]
        \item The constraints are open in $\Tau_C$, i.e., $C\subseteq\Tau_C$.
        \item The topology $\Tau_C$ is an {\em Alexandrov topology}, i.e., arbitrary intersections of open subsets are open.
        \item $\Tau_C$ coincides with the smallest Alexandrov topology containing $C$.
    \end{compactenum}
\end{proposition}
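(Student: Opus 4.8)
The plan is to verify the five items of Proposition~\ref{prop:T_C} in order, using the four properties collected in Fact~\ref{fact:C_n_and_sigma_n} as the main engine.

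\textbf{Item (i): $\Sigma_C$ is a base.} A family of subsets is a base of a topology on $\N$ precisely when it covers $\N$ and, for every $\sigma_n,\sigma_m\in\Sigma_C$ and every $k\in\sigma_n\cap\sigma_m$, there is some $\sigma_\ell\in\Sigma_C$ with $k\in\sigma_\ell\subseteq\sigma_n\cap\sigma_m$. Coverage is exactly Fact~\ref{fact:C_n_and_sigma_n}(i). For the second condition, if $k\in\sigma_n\cap\sigma_m$, then Fact~\ref{fact:C_n_and_sigma_n}(ii) applied twice gives $\sigma_k\subseteq\sigma_n$ and $\sigma_k\subseteq\sigma_m$, hence $\sigma_k\subseteq\sigma_n\cap\sigma_m$, and $k\in\sigma_k$ by (i). So $\sigma_k$ is the required basic set; in fact $\Sigma_C$ is closed under the relevant intersections up to containment.

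\textbf{Item (ii): characterization of opens.} The topology $\Tau_C$ generated by the base $\Sigma_C$ consists of arbitrary unions of members of $\Sigma_C$. If $U=\bigcup_{n\in U}\sigma_n$, this is manifestly such a union, so $U\in\Tau_C$. Conversely, if $U\in\Tau_C$, write $U=\bigcup_{i}\sigma_{n_i}$. For each $n\in U$ we have $n\in\sigma_{n_i}$ for some $i$, whence $\sigma_n\subseteq\sigma_{n_i}\subseteq U$ by Fact~\ref{fact:C_n_and_sigma_n}(ii); combined with $n\in\sigma_n$ this yields $U=\bigcup_{n\in U}\sigma_n$. (The same argument shows that for an \emph{arbitrary} $U\subseteq\N$, $U$ is open iff $\sigma_n\subseteq U$ for all $n\in U$.)

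\textbf{Items (iii)--(v).} Item (iii) is immediate from Fact~\ref{fact:C_n_and_sigma_n}(iv): every $\tau\in C$ satisfies $\tau=\bigcup_{n\in\tau}\sigma_n$, which is the criterion from item (ii), so $\tau\in\Tau_C$. For item (iv), let $\{U_j\}_{j\in J}$ be open and $U=\bigcap_j U_j$; for $n\in U$ we have $n\in U_j$ for every $j$, so $\sigma_n\subseteq U_j$ for every $j$ by item (ii), hence $\sigma_n\subseteq U$, and item (ii) again gives $U\in\Tau_C$. Finally, for item (v), let $\Tau$ be any Alexandrov topology containing $C$. For each $n\in\N$, the set $\sigma_n=\bigcap C_n$ is an intersection of members of $C\subseteq\Tau$ (or is $\N$ if $C_n=\emptyset$), hence open in $\Tau$ since $\Tau$ is Alexandrov; therefore every $U=\bigcup_{n\in U}\sigma_n\in\Tau_C$ lies in $\Tau$, i.e., $\Tau_C\subseteq\Tau$. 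Since $\Tau_C$ is itself an Alexandrov topology containing $C$ by (iii)--(iv), it is the smallest such.

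I do not expect a genuine obstacle here: the proposition is essentially a bookkeeping consequence of Fact~\ref{fact:C_n_and_sigma_n}, and the only mild subtlety is being careful with the degenerate case $C_n=\emptyset$ (where $\sigma_n=\N$), which is harmless since $\N$ belongs to every topology on $\N$. If anything warrants a second look, it is item (v): one must make sure that "smallest Alexandrov topology containing $C$" is well-defined, which follows because an arbitrary intersection of Alexandrov topologies is again Alexandrov and contains $C$ if each does — but since we exhibit $\Tau_C$ explicitly and show it is contained in every competitor, we can sidestep that remark entirely.
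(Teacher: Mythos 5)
Your proof is correct and follows essentially the same route as the paper's: items (i)--(iv) are derived from Fact~\ref{fact:C_n_and_sigma_n} in the same way, and item (v) is obtained by noting that each $\sigma_n=\bigcap C_n$ is open in any Alexandrov topology containing $C$. The extra care you take with the degenerate case $C_n=\emptyset$ and with the well-definedness of the smallest Alexandrov topology is harmless but not needed beyond what the paper already does.
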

\begin{proof}
    (i) According to Fact~\ref{fact:C_n_and_sigma_n}(i), it forms a cover of $\N$. Let $n,m\in\N$. If $\sigma_n\cap\sigma_m\neq\emptyset$, then for every $k\in\sigma_n\cap\sigma_m$, $k\in\sigma_k\subseteq\sigma_n\cap\sigma_m$ according to Fact~\ref{fact:C_n_and_sigma_n}(ii).

    (ii) If $U$ can be written as a union of elements of the base, it is trivially open. We want to show that if $U\in\Tau_C$, then $U\supseteq\bigcup_{n\in U}\sigma_n$ since the converse inclusion is clear.
    Let $n\in U$. Since $U$ is a union of elements of the base, there is $k\in U$ such that $n\in\sigma_k\subseteq U$. Hence, $\sigma_n\subseteq\sigma_k\subseteq U$ due to Fact~\ref{fact:C_n_and_sigma_n}(ii).

    (iii) It trivially follows from
    Fact~\ref{fact:C_n_and_sigma_n}(iv).

    (iv) Consider an arbitrary family $\mathcal{U}=\{U_i\}_{i\in I}$ of open subsets. Define $V=\bigcap\mathcal{U}$. For every $i\in I$, $U_i=\bigcup_{n\in U_i}\sigma_n$ according to item (ii). We claim that 
    \[
      V\supseteq\bigcup_{n\in V}\sigma_n,
    \]
    which concludes the proof. 
    Let $n\in V$. Then for every $i\in I$, there is $k_n^i\in U_i$ such that $n\in\sigma_{k_{n}^{i}}$. Hence, $\sigma_n\subseteq\sigma_{k_{n}^{i}}\subseteq U_i$, and so $\sigma_n\subseteq V$.

    (v) Clearly, for every $n\in\N$, $\sigma_n$ belongs to any Alexandrov topology containing $C$. Then, the result follows from items (ii), (iii) and (iv).
    
\end{proof}

The following proposition establishes the strength equivalence of three constraint sets.
 
\begin{proposition}\label{prop:C-constrained_iff_TC-constrained}
    Let $\N\in C\subseteq\mathcal P(\N)$ and  
    $f\colon\MP{1}\to\MP{2}$ be a map between chromatic metric pairs. Then, the following properties are equivalent:
    \begin{compactenum}[(i)]
        \item $f$ is $C$-constrained, 
        \item $f$ is $\{\sigma_n\}_{n\in\N}$-constrained, and
        \item $f$ is $\Tau_C$-constrained.
    \end{compactenum}
\end{proposition}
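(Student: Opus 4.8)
The plan is to establish the cycle of implications (iii) $\Rightarrow$ (i) $\Rightarrow$ (ii) $\Rightarrow$ (iii). The implication (iii) $\Rightarrow$ (i) is immediate from Proposition~\ref{prop:T_C}(iii): since $C\subseteq\Tau_C$, any map that is $\sigma$-constrained for every $\sigma\in\Tau_C$ is in particular $\sigma$-constrained for every $\sigma\in C$. Likewise (i) $\Rightarrow$ (ii) follows from Fact~\ref{fact:C_n_and_sigma_n}(iv): each $\sigma_n$ is a union of... wait, that is the wrong direction. Let me instead argue (i) $\Rightarrow$ (ii) directly: fix $n\in\N$ and let $x\in X_1$ with $\chi_1(x)\in\sigma_n$; I must show $\chi_2(f(x))\in\sigma_n$. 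Set $m=\chi_1(x)$, so $m\in\sigma_n$, hence $\sigma_m\subseteq\sigma_n$ by Fact~\ref{fact:C_n_and_sigma_n}(ii); thus it suffices to show $\chi_2(f(x))\in\sigma_m$. Now for every $\tau\in C_m$ we have $m=\chi_1(x)\in\tau$, so $x\in\chi_1^{-1}(\tau)$, and since $f$ is $\tau$-constrained, $f(x)\in\chi_2^{-1}(\tau)$, i.e. $\chi_2(f(x))\in\tau$. As this holds for all $\tau\in C_m$, we get $\chi_2(f(x))\in\bigcap C_m=\sigma_m\subseteq\sigma_n$, which is exactly $\{\sigma_n\}_{n\in\N}$-constrainedness.

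For (ii) $\Rightarrow$ (iii), let $U\in\Tau_C$ and let $x\in X_1$ with $\chi_1(x)\in U$. By Proposition~\ref{prop:T_C}(ii), $U=\bigcup_{n\in U}\sigma_n$, so there is some $n\in U$ with $\chi_1(x)\in\sigma_n$. By (ii), $\chi_2(f(x))\in\sigma_n\subseteq U$, so $f(x)\in\chi_2^{-1}(U)$. Hence $f$ is $U$-constrained for every $U\in\Tau_C$, i.e. $\Tau_C$-constrained. This closes the cycle.

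There is essentially no obstacle here: the content is entirely bookkeeping with the definitions of $C_n$, $\sigma_n$, and $\Tau_C$, and all the needed combinatorial facts are already packaged in Fact~\ref{fact:C_n_and_sigma_n} and Proposition~\ref{prop:T_C}. The one point requiring a moment's care is the (i) $\Rightarrow$ (ii) step, where one must not try to write $\sigma_n$ as a union of constraints (it is an \emph{intersection}) but instead use that a point colored in $\sigma_n$ is colored in \emph{every} $\tau\in C_n$, push each such membership through $f$, and intersect; the reduction $\sigma_m\subseteq\sigma_n$ via Fact~\ref{fact:C_n_and_sigma_n}(ii) is what makes the argument go through cleanly even when $\chi_1(x)\neq n$.
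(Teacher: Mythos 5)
Your proof is correct and follows essentially the same route as the paper: (i)$\Rightarrow$(ii) by intersecting the constraints in $C_n$ (the paper does this at the level of preimages, you do it pointwise, with a harmless extra reduction through $\sigma_m\subseteq\sigma_n$ that could be skipped since $\chi_1(x)\in\sigma_n$ already places $\chi_1(x)$ in every $\tau\in C_n$), and (ii)$\Rightarrow$(iii) by writing each open set as $\bigcup_{n\in U}\sigma_n$ via Proposition~\ref{prop:T_C}(ii). No gaps.
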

\begin{proof}
    The implication (iii)$\to$(i) is trivial.

    As for the implication (i)$\to$(ii), for every $n\in\N$,
    \[
        \begin{aligned}
        f(\chi_1^{-1}(\sigma_n))&\,=f(\chi_1^{-1}(\bigcap C_n)) =
        f(\bigcap_{\sigma\in C_n}\chi_1^{-1}(\sigma)) \subseteq
        \bigcap_{\sigma\in C_n}f(\chi_1^{-1}(\sigma)) \subseteq\\
        &\,\subseteq
        \bigcap_{\sigma\in C_n}\chi_2^{-1}(\sigma) =
        \chi_2^{-1}(\bigcap_{\sigma\in C_n}\sigma)=\chi_2^{-1}(\sigma_n).
        \end{aligned}
    \]
    
    Similarly, we obtain that $f$ is $\bigcup_{n\in I}\{\sigma_n\}$-constrained for every subset $I$ of $\N$, which shows the implication (ii)$\to$(iii). Indeed,
    \begin{equation}\label{eq:C-constrained_iff_TC-constrained}
        f(\chi_1^{-1}(\bigcup_{n\in I}\sigma_n)) =
        f(\bigcup_{n\in I}\chi_1^{-1}(\sigma_n)) =
        \bigcup_{n\in I}f(\chi_1^{-1}(\sigma_n)) \subseteq
        \bigcup_{n\in I}\chi_2^{-1}(\sigma_n) =
        \chi_2^{-1}(\bigcup_{n\in I}\sigma_n).
    \end{equation}
\end{proof}

Our goal is to characterize constraint sets up to their strengths. Proposition~\ref{prop:C-constrained_iff_TC-constrained} establishes that $C$ and $\Tau_C$ have the same strength. We would like to take advantage of this and compare the strength of constraint sets using inclusions of the Alexandrov topologies they generate. In order to do that, it remains to be checked that two constraint sets of the same strength always generate the same Alexandrov topology.
We consider the collection of all constraint sets with the same strength as $C$, and show that $\Tau_C$ is its maximum with respect to inclusion---in other words, if $\tau\subseteq\N$ is a constraint such that every $C$-constrained map is also $\tau$-constrained, then $\tau\in\Tau_C$.

\begin{theorem}\label{thm:all-constraints-are-alexandrov-topology}
    Let $\N\in C\subseteq\mathcal P(\N)$.
    Then $\Tau_C = \setdef{
            \sigma\subseteq\N
        }{
            \text{every $C$-constrained map is $\sigma$-constrained}
        }$.
\end{theorem}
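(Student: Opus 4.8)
The plan is to prove the two inclusions of the claimed set equality. Write $S = \setdef{\sigma\subseteq\N}{\text{every $C$-constrained map is $\sigma$-constrained}}$; we must show $\Tau_C = S$. The inclusion $\Tau_C\subseteq S$ is already available: Proposition~\ref{prop:C-constrained_iff_TC-constrained} shows that every $C$-constrained map is $\Tau_C$-constrained, i.e.\ $\sigma$-constrained for every $\sigma\in\Tau_C$, which is exactly the statement $\Tau_C\subseteq S$. So the whole content is in the reverse inclusion $S\subseteq\Tau_C$, and this is where essentially all the work (and the main obstacle) lies.

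For $S\subseteq\Tau_C$, I would argue by contraposition: fix $\sigma\subseteq\N$ with $\sigma\notin\Tau_C$ and build an explicit $C$-constrained map $f\colon(A_1,\chi_1)\to(A_2,\chi_2)$ that fails to be $\sigma$-constrained. By Proposition~\ref{prop:T_C}(ii), $\sigma\notin\Tau_C$ means $\sigma\neq\bigcup_{n\in\sigma}\sigma_n$; since the reverse inclusion $\sigma\supseteq\bigcup_{n\in\sigma}\sigma_n$ always holds (each $n\in\sigma_n$), there must exist $n^\ast\in\sigma$ and $m^\ast\in\sigma_{n^\ast}\setminus\sigma$. This pair $(n^\ast,m^\ast)$ is the obstruction we exploit: a $C$-constrained map is forced (by the (i)$\to$(ii) implication of Proposition~\ref{prop:C-constrained_iff_TC-constrained}) to send $\chi_1^{-1}(\sigma_{n^\ast})$ into $\chi_2^{-1}(\sigma_{n^\ast})$, hence it is allowed to move a point of color $n^\ast$ to a point of color $m^\ast$, thereby leaving $\chi_1^{-1}(\sigma)$.

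The construction itself: take $A_1 = A_2$ to be a two-point metric space (or even a one-point space on the source side suffices) — concretely let $(A_1,\chi_1)$ be a single point $x$ with $\chi_1(x)=n^\ast$, and let $(A_2,\chi_2)$ be a single point $y$ with $\chi_2(y)=m^\ast$, and $f$ the unique map $x\mapsto y$. I would then verify that $f$ is $C$-constrained: for each $\tau\in C$, we need $f(\chi_1^{-1}(\tau))\subseteq\chi_2^{-1}(\tau)$; the left side is empty unless $n^\ast\in\tau$, and if $n^\ast\in\tau\in C$ then $\tau\in C_{n^\ast}$, so $\sigma_{n^\ast}\subseteq\tau$ by definition of $\sigma_{n^\ast}=\bigcap C_{n^\ast}$; since $m^\ast\in\sigma_{n^\ast}\subseteq\tau$ we get $f(x)=y\in\chi_2^{-1}(\tau)$, as required. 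On the other hand $f$ is \emph{not} $\sigma$-constrained: $x\in\chi_1^{-1}(\sigma)$ because $n^\ast\in\sigma$, but $f(x)=y\notin\chi_2^{-1}(\sigma)$ because $m^\ast\notin\sigma$. Hence $\sigma\notin S$, completing the contrapositive and thus the inclusion $S\subseteq\Tau_C$.

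The only subtlety — and the step I'd flag as the place to be careful rather than a genuine obstacle — is making sure the single-point (or two-point) chromatic metric pairs are legitimate objects in the sense of Definition~\ref{def:chromatic-metric-pair} (here $A=X$ is a point, a chromatic metric space, which is fine) and that the empty-preimage cases in the $C$-constrainedness check are handled correctly (if $C_{n^\ast}=\emptyset$ then $\sigma_{n^\ast}=\N$, so $m^\ast$ can be any color not in $\sigma$, and such $m^\ast$ exists precisely because $\sigma\neq\N$; note $\N\in C$ forces $n^\ast\in\N$ trivially and $\N\notin S$-failure cannot occur since $\N\in\Tau_C$). Once these bookkeeping points are in place, the argument is complete; no hard analysis is needed, the weight of the theorem being carried entirely by Fact~\ref{fact:C_n_and_sigma_n}, Proposition~\ref{prop:T_C}, and Proposition~\ref{prop:C-constrained_iff_TC-constrained}.
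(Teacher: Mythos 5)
Your proposal is correct and follows essentially the same route as the paper: the inclusion $\Tau_C\subseteq S$ via Proposition~\ref{prop:C-constrained_iff_TC-constrained}, and the reverse inclusion by exhibiting, for $\sigma\notin\Tau_C$, a pair $n^\ast\in\sigma$, $m^\ast\in\sigma_{n^\ast}\setminus\sigma$ and the map between singleton chromatic metric spaces sending a point of color $n^\ast$ to one of color $m^\ast$, which is exactly the paper's construction. One cosmetic slip: the inclusion that always holds is $\sigma\subseteq\bigcup_{n\in\sigma}\sigma_n$ (because $n\in\sigma_n$), not $\sigma\supseteq\bigcup_{n\in\sigma}\sigma_n$ as written; the conclusion you draw from it ($\sigma\subsetneq\bigcup_{n\in\sigma}\sigma_n$, hence the existence of $n^\ast$ and $m^\ast$) is nonetheless the correct one.
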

\begin{proof}
    Proposition~\ref{prop:C-constrained_iff_TC-constrained} proves $(\subseteq)$. For $(\supseteq)$, let $\tau\in\mathcal P(\N)\setminus\Tau_C$. 
    We need to construct a map that is $C$-constrained, but not $\tau$-constrained. As $\tau\notin \Tau_C$, we have $\tau \subsetneq \bigcup_{n\in\tau}\sigma_n$ according to Proposition~\ref{prop:T_C}(ii). Then, there exists $n\in\tau$ and $m\in \sigma_n\setminus\tau$. Consider the identity map between two singleton chromatic metric spaces 
    \[
        f\colon(\{\ast\},\chi:\ast\mapsto n) \longrightarrow (\{\ast\},\psi:\ast\mapsto m).
    \]
    Then, $f$ is $C$-constrained, since $m\in\sigma_n$ and $\sigma_n$ is exactly the set of colors where $C$-constraint maps can send a point colored by $n$.
    But it is not $\tau$-constrained, since $n\in\tau$ and $m\notin\tau$.
\end{proof}

A characterization of the strength of constraint sets immediately follows.
\begin{corollary}\label{coro:constraints-strength-and-alexandrov-topology}
    Given two subsets $\N\in C,D\subseteq\mathcal P(\N)$, the constraint set $C$ is stronger than $D$, $C\succeq D$, if and only if $\Tau_C \supseteq \Tau_D$.
    They have the same strength (i.e., $C\preceq D$ and $C\succeq D$) if and only if $\Tau_C = \Tau_D$.
\end{corollary}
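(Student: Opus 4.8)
The plan is to reduce everything to Theorem~\ref{thm:all-constraints-are-alexandrov-topology} combined with the fact, recorded in Proposition~\ref{prop:T_C}(iii)--(v), that $\Tau_C$ is the smallest Alexandrov topology containing $C$. First I would unwind the definition of strength: by definition, $C \succeq D$ means that every $C$-constrained map between chromatic metric pairs is $D$-constrained, i.e.\ is $\sigma$-constrained for every $\sigma \in D$. Equivalently, $D \subseteq \{\sigma\subseteq\N \mid \text{every $C$-constrained map is $\sigma$-constrained}\}$. By Theorem~\ref{thm:all-constraints-are-alexandrov-topology} the set on the right is precisely $\Tau_C$, so $C \succeq D$ if and only if $D \subseteq \Tau_C$.

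Next I would upgrade the condition $D \subseteq \Tau_C$ to the topology inclusion $\Tau_D \subseteq \Tau_C$. One direction is immediate: if $\Tau_D \subseteq \Tau_C$ then $D \subseteq \Tau_D \subseteq \Tau_C$, using $D \subseteq \Tau_D$ from Proposition~\ref{prop:T_C}(iii). Conversely, if $D \subseteq \Tau_C$, then $\Tau_C$ is an Alexandrov topology (Proposition~\ref{prop:T_C}(iv)) that contains $D$, hence it contains the smallest Alexandrov topology containing $D$, which by Proposition~\ref{prop:T_C}(v) is $\Tau_D$; therefore $\Tau_D \subseteq \Tau_C$. Chaining the two equivalences yields $C \succeq D \iff \Tau_C \supseteq \Tau_D$, which is the first assertion.

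For the ``same strength'' clause, $C$ and $D$ have the same strength iff $C \succeq D$ and $D \succeq C$; applying the first part in each direction, this is equivalent to $\Tau_C \supseteq \Tau_D$ and $\Tau_D \supseteq \Tau_C$, i.e.\ $\Tau_C = \Tau_D$. I expect no genuine obstacle: the substantive work is entirely contained in Theorem~\ref{thm:all-constraints-are-alexandrov-topology} and Proposition~\ref{prop:T_C}. The only step warranting a word of care is the passage from $D \subseteq \Tau_C$ to $\Tau_D \subseteq \Tau_C$, where it is essential that $\Tau_D$ is minimal among \emph{Alexandrov} topologies containing $D$ (not merely among ordinary topologies), so that the Alexandrov property of $\Tau_C$ guaranteed by Proposition~\ref{prop:T_C}(iv) can be invoked.
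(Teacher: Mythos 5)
Your proof is correct and follows exactly the route the paper intends: the paper states this corollary as an immediate consequence of Theorem~\ref{thm:all-constraints-are-alexandrov-topology}, and your argument is precisely the filling-in of that deduction, combining the identification $C\succeq D\iff D\subseteq\Tau_C$ with the minimality of $\Tau_D$ among Alexandrov topologies containing $D$ from Proposition~\ref{prop:T_C}. The point you flag---that minimality among \emph{Alexandrov} topologies is what is needed---is the right one to be careful about, and you handle it correctly.
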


\section{Gromov-Hausdorff distance for chromatic metric pairs}\label{sec:C-constrained_GH}

Having set up the notion of $C$-constrained maps, we can define a variant of the Gromov-Hausdorff distance for chromatic metric pairs analogous to the definition via pairs of maps in Theorem~\ref{thm:Kalton_Ostrovskii}.
We give the definition for any constraint set, whether it contains $\N$ or not---recall Remark~\ref{remark:always-include-N} for more details.

\begin{definition}\label{def:C-constrained_GH}
	Let $\MP{1}$ and $\MP{2}$ be two chromatic metric pairs and $C\subseteq\mathcal P(\N)$. We define their {\em $C$-constrained Gromov-Hausdorff distance} as
	\[
	\GH^C(\MP{1},\MP{2})=\frac{1}{2}\inf_{\substack{f\colon A_1\to A_2\\g\colon A_2\to A_1\\\text{$C$-constrained}}}\max\{\dis f,\dis g,\codis(f,g)\}.
	\]
\end{definition}

For every choice of $C$, we get a different notion of distance. Clearly, two constraint sets of the same strength will yield the same distances. By the same token, a weaker constraint set yields at most as large distances as a stronger one, since there is more freedom for the choice of $f$ and $g$ to minimize over. One motivation for choosing a weaker $C$, with weaker distinguishing power, is the desire to bound distances between various invariants as tightly as possible---if an invariant does not use all the color information, we should choose a weaker constraint set $C$ and consequently get a smaller distance between the pairs.
Note, in particular, that the choice $C=\{\N\}$ yields a Gromov-Hausdorff distance between the pairs $(A_1, X_1), (A_2, X_2)$---see Corollary~\ref{coro:GH_metric_pairs} for comparison with the notion defined in \cite{GomChe}---and the choice $C=\emptyset$ yields the usual Gromov-Hausdorff distance between the spaces $A_1$ and $A_2$.

\begin{example}
    Let us consider the spaces studied in Example~\ref{ex:examples_of_C-constrained_maps}. The $C_T$-constrained Gromov-Hausdorff distance between the four spaces is trivially $0$, and the $C_D$-constrained one is infinite since, for no pair of spaces, there are $C_D$-constrained maps going in both directions. Consider $C_1$. Using the knowledge gathered in the example, we can easily conclude that
    \[
    \GH^{C_1}(\chi_i,\chi_j)<\infty,\quad \text{and}\quad \GH^{C_1}(\chi_i,\chi_4)=\infty,\quad\text{for $i,j=1,2,3$}.
    \]
    Similarly, we can show that
    \[
    \GH^{C_2}(\chi_i,\chi_j)<\infty,\quad \text{and}\quad \GH^{C_2}(\chi_1,\chi_i)=\infty,\quad\text{for $i,j=2,3,4$}.
    \]
\end{example}

Let us state a few immediate observations.
\begin{remark}\label{rem:C-constrained_GH_and_other_GH}
	\begin{compactenum}[(i)]
		\item The difference between Definition \ref{def:C-constrained_GH} and the characterization of the classical Gromov-Hausdorff distance provided in Theorem \ref{thm:Kalton_Ostrovskii} is that, in the former, we restrict the family of admissible functions. In particular, it is immediate to see that, in the notation of Definition \ref{def:C-constrained_GH},
		\[
    		\GH^C(\MP{1},\MP{2})\geq\max\{\GH(A_1,A_2),\GH(\chi_1^{-1}(\sigma),\chi_2^{-1}(\sigma))\mid\sigma\in C\}.
		\]
        It is not hard to construct examples where the inequality is strict. Consider $X_1=X_2=[-M,M]$ for some $M>0$, $m<M/2$, and define the following $\chi_1, \chi_2$ (see Figure \ref{fig:GH_not_enough}):
        \[
            \chi_1\colon\begin{cases}
                \begin{aligned}
                    [0,2m]&\,\mapsto 0\\
                    [-M,M]\setminus[0,2m]&\,\mapsto 1,
                \end{aligned}
            \end{cases}\quad\text{and}\quad
             \chi_2\colon\begin{cases}
                \begin{aligned}
                    [-m,m]&\,\mapsto 0\\
                    [-M,M]\setminus[-m,m]&\,\mapsto 1
                \end{aligned}
            \end{cases}
        \]
        \begin{figure}[h!]
            \centering
            \begin{tikzpicture}
                \draw[magenta,ultra thick] (0,0)--(2,0);
                \draw[cyan,ultra thick] (-2.5,0)--(0,0) (2,0)--(2.5,0);
                \draw 
                (0,0.1) node[above]{$0$};
                \fill[magenta] (0,0) circle (2pt);
                \draw 
                (2,0.1) node[above]{$2m$};
                \fill[magenta] (2,0) circle (2pt);
                \draw 
                (-2.5,0.1) node[above]{$-M$}; 
                \fill[cyan] (-2.5,0) circle (2pt);
                \draw 
                (2.5,0.1) node[above]{$M$}; 
                \fill[cyan] (2.5,0) circle (2pt);
                \draw (0,-0.3)node[below]{$\chi_1$};

                \draw[magenta,ultra thick] (6,0)--(8,0);
                \draw[cyan,ultra thick] (4.5,0)--(6,0) (8,0)--(9.5,0);
                \draw 
                (6,0.1) node[above]{$-m$};
                \fill[magenta] (6,0) circle (2pt);
                \draw 
                (8,0.1) node[above]{$m$};
                \fill[magenta] (8,0) circle (2pt);
                \draw 
                (4.5,0.1) node[above]{$-M$}; 
                \fill[cyan] (4.5,0) circle (2pt);
                \draw 
                (9.5,0.1) node[above]{$M$};
                \fill[cyan] (9.5,0) circle (2pt);
                \draw (7,-0.3)node[below]{$\chi_2$};
            \end{tikzpicture}
            \caption{\footnotesize A representation of the two chromatic metric spaces defined in Remark \ref{rem:C-constrained_GH_and_other_GH}(i).}\label{fig:GH_not_enough}
        \end{figure}
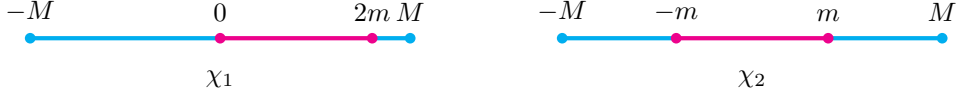
        
        Set $C=\{\{0\},\N\}$. Then, $\GH^{C}(\chi_1,\chi_2)>0$, but $\GH(X_1,X_2)=\GH(\chi_1^{-1}(\{0\}),\chi_2^{-1}(\{0\}))=0$ since both pairs are isometric.
		
        \item We have discussed in Remark~\ref{rem:from_metric_to_chromatic_metric_pairs} that metric spaces and metric pairs can be canonically seen as chromatic metric pairs. Those constructions agree with the $C$-constrained Gromov-Hausdorff distance. Indeed, for every pair of metric spaces $X$ and $Y$ and every
        constraint set $C\subseteq\mathcal P(\N)$,
	\[
		\GH(X,Y)=\GH^C((X,
        0_X)
        ,(Y,
        0_Y)
        ).
		\]
		Furthermore, if $\chi\colon X\to\N$ and $\psi\colon Y\to\N$ are two arbitrary coloring functions, then
		\[
		\GH(X,Y)=\GH^{\{\N\}}((X,\chi \colon X \to \N),(Y,\psi \colon Y \to \N)).
        \]
        Using the same idea, we can induce a notion of Gromov-Hausdorff distance on metric pairs: if $(A_1,X_1)$ and $(A_2,X_2)$ are metric pairs, then define their {\em Gromov-Hausdorff distance} as
        \[
        \GH((A_1,X_1),(A_2,X_2))=\GH^{\{\N\}}((A_1,0_{X_1}),(A_2,0_{X_2})).
        \]
        This distance notion will be compared to that introduced in \cite{GomChe} in Corollary~\ref{coro:GH_metric_pairs}.
		
        \item Since compositions of $C$-constrained maps are $C$-constrained, 
        the triangle inequality of the $C$-constrained Gromov-Hausdorff distance can be shown as in the classic case. It is obvious that it is also symmetric.

        \item Let $\N\in C_i\subseteq\mathcal P(\N)$ for $i=1,2$ be two constraint sets. If $C_1\preceq C_2$, that is, every $C_2$-constrained map is also $C_1$-constrained, then, for every pair of chromatic metric pairs $\MP{1}$ and $\MP{2}$,
        \[
        \GH^{C_1}(\MP{1},\MP{2})\leq\GH^{C_2}(\MP{1},\MP{2}).
        \]
		
        \item 
        In the notation of Definition \ref{def:C-constrained_GH}, 
		\[
    		\GH^C(\MP{1},\MP{2})
            =\GH^{\Sigma_C}(\MP{1},\MP{2})
            =\GH^{\Tau_C}(\MP{1},\MP{2}),
		\]
		where $\Sigma_C=\{\sigma_n\}_{n\in\N}$. This holds because Proposition \ref{prop:C-constrained_iff_TC-constrained} implies that the three constraint sets allow for the same maps. In fact, the distance $\GH^{C'}$ is the same for any $C'$ such that $C\subseteq C' \subseteq \Tau_C$ or $\Sigma_C\subseteq C' \subseteq \Tau_C$.
	\end{compactenum}
\end{remark}

\subsection{Invariants and computations}\label{sub:invariants}

In \cite{Mem12}, the author collected some {\em metric invariants} assigning to every metric space $X$ a value $\psi(X)$ in some other metric space such that, if $X$ and $Y$ are isometric metric spaces, then $\psi(X)=\psi(Y)$. Furthermore, an important property of these invariants is that they are {\em stable}, i.e., the distance between two invariants $\psi(X)$ and $\psi(Y)$ is bounded by some function of the Gromov-Hausdorff distance between $X$ and $Y$.
The diameter of a metric space is an example that we already recalled in Proposition~\ref{prop:GH_and_diam}.

To provide lower bounds to the $C$-constrained Gromov-Hausdorff distance, we can use Remark \ref{rem:C-constrained_GH_and_other_GH}(i) in conjunction with stable metric invariants. Suppose that $\psi$ is a metric invariant with values in $(\mathcal Y,d_{\mathcal Y})$ satisfying, for some $k>0$,
\[
d_{\mathcal Y}(\psi(X),\psi(Y))\leq k\cdot\GH(X,Y)
\]
for every pair of metric spaces $X$ and $Y$. 
Then, for every $\N\in C\subseteq\mathcal P(\N)$ and every pair of chromatic metric pairs $\MP{1}$ and $\MP{2}$,
\begin{equation*} \label{eq:lower-bound_1}\GH^C(\MP{1},\MP{2})\geq\GH(A_1,A_2)\geq\frac{1}{k}d_{\mathcal Y}(\psi(A_1),\psi(A_2))
\end{equation*}
and, for every $\sigma\in C$,
\begin{equation*} \label{eq:lower-bound_2}
    \GH^C(\MP{1},\MP{2})\geq\GH(\chi_1^{-1}(\sigma),\chi_2^{-1}(\sigma))\geq\frac{1}{k}d_{\mathcal Y}(\psi(\chi_1^{-1}(\sigma)),\psi(\chi_2^{-1}(\sigma))).
\end{equation*}

In the following example, we make use of these ideas to provide exact computations of the $C$-constrained Gromov-Hausdorff distance.

\begin{example}\label{ex:C-GH_computations}
    
    Let us consider four chromatic metric spaces: $([0,3r),\chi_1)$, $([0,4r),\chi_2)$, $([0,5r),\chi_3)$ and $([0,3r),\chi_4)$ where the intervals are endowed with the usual Euclidean metric and the coloring maps are
    \[
        \chi_1\colon[0,3r)\mapsto 0,\quad\chi_2\colon\begin{cases}
            \begin{aligned}
                [0,r) &\mapsto 1\\
                [r,4r) &\mapsto 0,
            \end{aligned}
        \end{cases}
        \quad \chi_3\colon\begin{cases}
            \begin{aligned}
                [0,r)\cup[4r,5r) &\mapsto 1\\
                [r,4r) &\mapsto 0,
            \end{aligned}
        \end{cases}
        \quad\text{and}\quad\chi_4\colon[0,3r)\mapsto 1.
    \]
    (see Figure \ref{fig:C-GH_computations}). We want to compute the $C$-constrained Gromov-Hausdorff distances between them for $C_D$ and $C=\{\{0\},\N\}$.
    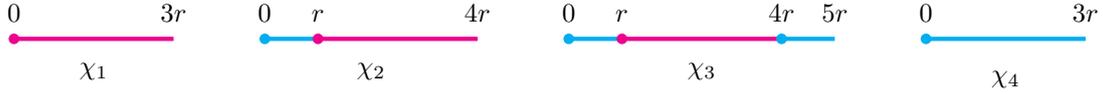
\begin{figure}[h!]
        \centering
        \begin{tikzpicture}
            \draw[magenta, ultra thick] (0,0)--(2.1,0);
            \draw (1.05,-0.2) node[below]{$\chi_1$};
            \draw[magenta, ultra thick] (4,0)--(6.1,0);
            \draw[cyan, ultra thick] (3.3,0)--(4,0);
            \draw (4.7,-0.2) node[below]{$\chi_2$};
            \draw[magenta, ultra thick] (8,0)--(10.1,0);
            \draw[cyan, ultra thick] (7.3,0)--(8,0) (10.1,0)--(10.8,0);
            \draw (9.05,-0.2) node[below]{$\chi_3$};
            \draw[cyan, ultra thick] (12,0)--(14.1,0);
            \draw (13.05,-0.3) node[below]{$\chi_4$};
            \draw 
            (0,0.1) node[above]{$0$};
            \fill[magenta] (0,0) circle (2pt);
            \draw 
            (2.1,0.1) node[above]{$3r$};
            \draw 
            (3.3,0.1) node[above]{$0$};
            \fill[cyan] (3.3,0) circle (2pt);
            \draw 
            (4,0.1) node[above]{$r$};
            \fill[magenta] (4,0) circle (2pt);
            \draw 
            (6.1,0.1) node[above]{$4r$};
            \draw 
            (7.3,0.1) node[above]{$0$};
            \fill[cyan] (7.3,0) circle (2pt);
            \draw 
            (8,0.1) node[above]{$r$};
            \fill[magenta] (8,0) circle (2pt);
            \draw 
            (10.1,0.1) node[above]{$4r$};
            \fill[cyan] (10.1,0) circle (2pt);
            \draw 
            (10.8,0.1) node[above]{$5r$};
            \draw 
            (12,0.1) node[above]{$0$};
            \fill[cyan] (12,0) circle (2pt);
            \draw 
            (14.1,0.1) node[above]{$3r$};
        \end{tikzpicture}
        \caption{\footnotesize A representation of the four chromatic metric pairs defined in Example \ref{ex:C-GH_computations}(ii). The points colored in $0$ are represented in magenta, and in cyan, those colored in $1$.}
        \label{fig:C-GH_computations}
    \end{figure}

    First, we can immediately note that
    \begin{gather*}
    \GH^{C_D}(\chi_1,\chi_2)=\GH^{C_D}(\chi_1,\chi_3)=\GH^{C_D}(\chi_1,\chi_4)=\GH^{C_D}(\chi_2,\chi_4)=\GH^{C_D}(\chi_3,\chi_4)=\infty,\quad\text{and}\\
    \GH^C(\chi_1,\chi_4)=\GH^C(\chi_2,\chi_4)=\GH^C(\chi_3,\chi_4)=\infty.
    \end{gather*}
    To lower bound the other distances, we use Remark \ref{rem:C-constrained_GH_and_other_GH}(i) and Proposition \ref{prop:GH_and_diam}. Using the two results, we obtain the following inequalities:
    \begin{gather*}
    \GH^{C_D}(\chi_2,\chi_3)\geq\frac{\lvert\diam\chi_2^{-1}(\{1\})-\diam\chi_3^{-1}(\{1\})\rvert}{2}=\frac{\lvert r-5r\rvert}{2}=2r,\\
    \GH^C(\chi_1,\chi_2)\geq \frac{\lvert\diam\chi_1^{-1}(\N)-\diam\chi_2^{-1}(\N)\rvert}{2}=\frac{r}{2},\quad \GH^C(\chi_1,\chi_3)\geq r,\quad\text{and}\quad \GH^C(\chi_2,\chi_3)\geq \frac{r}{2}.
    \end{gather*}
    We construct particular constrained maps to provide upper bounds. 
    
    Let us start with $\GH^{C_D}(\chi_2,\chi_3)$. We consider the inclusion map $i\colon \chi_2\to\chi_3$, whose distortion is $0$. In the converse direction, both segments $[0,r)$ and $[4r,5r)$ must be sent into $[0,r)$. Define a $C_D$-constrained map $f\colon\chi_3\to\chi_2$ 
    as follows: for every $x\in[0,5r)$,
    \[
    f(x)=\begin{cases}
        \begin{aligned}
            & x & \text{if $x\in[0,4r)$,}\\
            & x-4r & \text{if $x\in [4r,5r)$.}
        \end{aligned}
    \end{cases}
    \]
    Then, $\dis f=\codis(f,i)=4r$, and $\GH^{C_D}(\chi_2,\chi_3)\leq 2r$. Therefore, $\GH^{C_D}(\chi_2,\chi_3)=2r$. 

    Let us estimate the $C$-constrained Gromov-Hausdorff distance between $\chi_1$, $\chi_2$ and $\chi_3$. In all three cases, in one direction, we can consider the inclusion maps. In the opposite direction, we define maps $f^j_i\colon\chi_j\to\chi_i$ for $1\leq i<j\leq 3$ as follows:
    \[
    f^2_1(x)=\begin{cases}
        \begin{aligned}
            & 0 & \text{if $x\in[0,r)$,}\\
            & x-r & \text{if $x\in[r,4r)$,}
        \end{aligned}
    \end{cases}\quad
    f^3_1(x)=\begin{cases}
        \begin{aligned}
            & 0 & \text{if $x\in[0,r)$,}\\
            & x-r & \text{if $x\in[r,4r)$,}\\
            & 3r & \text{if $x\in[4r,5r)$,}
        \end{aligned}
    \end{cases}\quad\text{and}\quad
    f^3_2(x)=\begin{cases}
        \begin{aligned}
            & x & \text{if $x\in[0,4r)$,}\\
            & 4r & \text{if $x\in[4r,5r)$.}\\
        \end{aligned}
    \end{cases}
    \]
    
    By computing the distortion and codistortion of these maps, we can conclude that
    \[
    \GH^C(\chi_1,\chi_2)=\frac{r}{2},\quad\GH^C(\chi_1,\chi_3)=r,\quad\text{and}\quad \GH^C(\chi_2,\chi_3)=\frac{r}{2}.
    \]
    We want to emphasize that
    \[
    \GH^C(\chi_2,\chi_3)=\frac{r}{2}<2r=\GH^{C_D}(\chi_2,\chi_3),
    \]
    which is consistent with Remark \ref{rem:C-constrained_GH_and_other_GH}(iv).
    
\end{example}

Inspired by known metric invariants, we can define similar concepts in the realm of chromatic metric pairs. The goal is to obtain more discerning tools providing tighter lower bounds.

\begin{definition}\label{def:cchromatic_invariants}
    Let $\N\in C\subseteq\mathcal P(\N)$.
    For every chromatic metric pair $\MP{}$, and every pair of distinct $\sigma,\tau\in\mathcal U$, we define the function
    \[
        \mathcal L_{\MP{}}^{\sigma,\tau}\colon\chi^{-1}(\sigma)\to\mathcal P(\R_{\geq 0}),
        \text{ by letting }\mathcal L_{\MP{}}^{\sigma,\tau}(x)=\{d(x,x')\mid x'\in\chi^{-1}(\tau)\}
        \text{, for every $x\in\chi^{-1}(\sigma)$,}
    \]
    be the collection of all distances from $x$ to $\tau$-colored points.
    This is a generalization of the local distance set as presented in \cite{Mem12}.
    We also define more invariants, inspired by the notions of the distance set, the eccentricity function, the separation, and the circumradius, which are all classical invariants of the Gromov-Hausdroff distance, collected in \cite{Mem12}, and give more intuitive descriptions below:
    \begin{compactenum}[-]
        \item $\mathcal D_{\MP{}}^{\sigma,\tau}=\bigcup\mathcal L_{\MP{}}^{\sigma,\tau}(\chi^{-1}(\sigma))$;
        
        \item $\ecc_{\MP{}}^{\sigma,\tau}\colon\chi^{-1}(\sigma)\to\R_{\geq 0}$, 
        given by 
        $\ecc_{\MP{}}^{\sigma,\tau}(x)=\sup\mathcal L_{\MP{}}^{\sigma,\tau}(x) = \sup_{x'\in\chi^{-1}(\tau)}d(x,x')$, for $x\in\chi^{-1}(\sigma)$;
        
        \item $\sep_{\MP{}}^{\sigma,\tau}\colon\chi^{-1}(\sigma)\to\R_{\geq 0}$, given by 
        $\sep_{\MP{}}^{\sigma,\tau}(x)=\inf\mathcal L_{\MP{}}^{\sigma,\tau}(x)
        = \inf_{x'\in\chi^{-1}(\tau)}d(x,x')$, for $x\in\chi^{-1}(\sigma)$;
        
        \item $\mathcal E_{\MP{}}^{\sigma,\tau}=\ecc_{\MP{}}^{\sigma,\tau}(\chi^{-1}(\sigma))$;
        
        \item $\mathcal S_{\MP{}}^{\sigma,\tau}=\sep_{\MP{}}^{\sigma,\tau}(\chi^{-1}(\sigma))$;
        
        \item $\rad_{\MP{}}^{\sigma,\tau}=\inf\mathcal E_{\MP{}}^{\sigma,\tau}
         = \inf_{x\in\chi^{-1}(\sigma)}\sup_{x'\in\chi^{-1}(\tau)}d(x,x')$;
        
        \item $\dist_{\MP{}}^{\sigma,\tau}=\inf\mathcal S_{\MP{}}^{\sigma,\tau}
         = \inf_{x\in\chi^{-1}(\sigma)}\inf_{x'\in\chi^{-1}(\tau)}d(x,x')$.
    \end{compactenum}
    
\end{definition}

The intuitive meaning of the invariants defined above is as follows.
For every $\sigma$-colored point $x\in X$:
\begin{compactenum}[-]
    \item $\mathcal L_{\MP{}}^{\sigma,\tau}(x)$ collects all its distances to points that are $\tau$-colored;
    \item $\mathcal D^{\sigma,\tau}_{\MP{}}$ consists of all the lengths of edges connecting a $\sigma$-colored point and a $\tau$-colored point;
    \item $\ecc_{\MP{}}^{\sigma,\tau}(x)$ is the radius of the smallest closed 
    ball centered in $x$ containing all the $\tau$-colored points;
    \item $\sep_{\MP{}}^{\sigma,\tau}(x)$ is the distance between $x$ and the set $\chi^{-1}(\tau)$,
    \item $\rad_{\MP{}}^{\sigma,\tau}$ is the radius of the smallest closed ball centered in a $\sigma$-colored point and containing all the $\tau$-colored points;
    \item $\dist_{\MP{}}^{\sigma,\tau}$ is
    
    the distance $\dist(\chi^{-1}(\sigma),\chi^{-1}(\tau))$, with the usual notation $\dist(A,B)=\inf\{d(a,b)\mid a\in A,b\in B\}$ for two subsets $A$ and $B$ of a common metric space.
\end{compactenum}

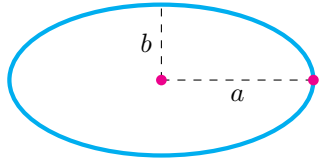
\begin{figure}[!h]
    \centering
    \begin{tikzpicture}
                \draw[dashed] (0,0)--(2,0)node[pos=0.5,below]{$a$};
                \draw[dashed] (0,0)--(0,1)node[pos=0.5,left]{$b$};
                \draw[cyan, ultra thick] (0,0) ellipse (2cm and 1cm);
                \fill[magenta] (0,0) circle (2pt) (2,0) circle (2pt);
                
            \end{tikzpicture}
    \caption{\footnotesize The chromatic metric pair $(\mathbb R^2, \chi)$ as defined in Example \ref{ex:invariants}. The colors $0$ and $1$ are represented in magenta and cyan, respectively.}
    \label{fig:invariants}
\end{figure}

\begin{example} \label{ex:invariants}
    Let us discuss 
    a quick example to demonstrate the different invariants defined above. Take the chromatic metric pair $(\mathbb R^2, \chi\colon X \to \mathbb \{0,1\})$ endowed with the usual Euclidean metric and where $X$ is given by the disjoint union of an ellipse with semi-major axis $a$ and semi-minor axis $b\leq a$ together with its center point at $(0,0)$. The coloring is
    \[ \chi(x) = \begin{cases}
        0 & \text{if }x\in\{(0,0), (a,0)\}, \\
        1 & \text{otherwise},
    \end{cases} 
    \]
    as shown 
    in Figure \ref{fig:invariants}.  
    We have: 
    \begin{align*}
        \mathcal L_{(\mathbb R^2, \chi)}^{\{0\},\{1\}}((0,0)) &= [b,a], &
        \mathcal L_{(\mathbb R^2, \chi)}^{\{0\},\{1\}}((a,0)) &= (0,2a], &
        \mathcal D_{(\mathbb R^2, \chi)}^{\{0\},\{1\}} &= (0,2a], &
        &
        \\
        \ecc_{(\mathbb R^2, \chi)}^{\{0\},\{1\}}((0,0)) &= a, &
        \ecc_{(\mathbb R^2, \chi)}^{\{0\},\{1\}}((a,0)) &= 2a, &
        \mathcal E_{(\mathbb R^2, \chi)}^{\{0\},\{1\}} &= \{a,2a\}, &
        \rad_{(\mathbb R^2, \chi)}^{\{0\},\{1\}} &= a,
        \\
        \sep_{(\mathbb R^2, \chi)}^{\{0\},\{1\}}((0,0)) &= b, &
        \sep_{(\mathbb R^2, \chi)}^{\{0\},\{1\}}((a,0)) &= 0, &
        \mathcal S_{(\mathbb R^2, \chi)}^{\{0\},\{1\}} &= \{0,b\}, &
        \dist_{(\mathbb R^2, \chi)}^{\{0\},\{1\}} &= 0,
    \end{align*}
    with the first column corresponding to the point $(0, 0)$, the second to the point $(a, 0)$, the third column is the union of the previous two, and the last is the infimum of the third.
\end{example}

All the objects defined in Definition \ref{def:cchromatic_invariants} are stable with respect to the $C$-constrained Gromov-Hausdorff distance as we formally state in the following result.

\begin{proposition}\label{prop:chromatic_invariants}
    Let $\N\in C\subseteq\mathcal P(\N)$. For every pair of chromatic metric pairs $\MP{1}$ and $\MP{2}$, and every pair of distinct $\sigma,\tau\in\mathcal \Tau_C$, we consider the following invariant
    \[
        d_{\mathcal{L}}^{\sigma,\tau}((A_1,\chi_1), (A_2, \chi_2))
        =
        \inf_{R\in\mathcal R(\chi_1^{-1}(\sigma),\chi_2^{-1}(\sigma))}\sup_{(x_1,x_2)\in R}d_H(\mathcal L_{\MP{1}}^{\sigma,\tau}(x_1),\mathcal L_{\MP{2}}^{\sigma,\tau}(x_2)).
    \]
    Then, the following inequalities hold:
    \begin{equation}
    \begin{aligned}
        \lvert\rad_{\MP{1}}^{\sigma,\tau}-\rad_{\MP{2}}^{\sigma,\tau}\rvert&\,
        \leq
        d_H(\mathcal E_{\MP{1}}^{\sigma,\tau},\mathcal E_{\MP{2}}^{\sigma,\tau})
        = \\
        &\,=
        \inf_{R\in\mathcal R(\chi_1^{-1}(\sigma),\chi_2^{-1}(\sigma))}\sup_{(x_1,x_2)\in R}\lvert\ecc_{\MP{1}}^{\sigma,\tau}(x_1)-\ecc_{\MP{2}}^{\sigma,\tau}(x_2)\rvert\leq \\
        &\,\leq
        d_{\mathcal{L}}^{\sigma,\tau}((A_1,\chi_1), (A_2, \chi_2)),
    \end{aligned}
    \end{equation}
    \begin{equation}
    \begin{aligned}
        \lvert\dist_{\MP{1}}^{\sigma,\tau}-\dist_{\MP{2}}^{\sigma,\tau}\rvert&\,
        \leq
        d_H(\mathcal S_{\MP{1}}^{\sigma,\tau},\mathcal S_{\MP{2}}^{\sigma,\tau})
        =\\
        &\,=
        \inf_{R\in\mathcal R(\chi_1^{-1}(\sigma),\chi_2^{-1}(\sigma))}\sup_{(x_1,x_2)\in R}\lvert\sep_{\MP{1}}^{\sigma,\tau}(x_1)-\sep_{\MP{2}}^{\sigma,\tau}(x_2)\rvert
        \leq \\
        &\,\leq
        d_{\mathcal{L}}^{\sigma,\tau}((A_1,\chi_1), (A_2, \chi_2)),
    \end{aligned}
    \end{equation}
    
    \begin{equation}\label{eq:stable_chromatic_invariants}
    \begin{aligned}
    d_H(\mathcal D_{\MP{1}}^{\sigma,\tau},\mathcal D_{\MP{2}}^{\sigma,\tau})
    &\,\leq
    d_{\mathcal{L}}^{\sigma,\tau}((A_1,\chi_1), (A_2, \chi_2))
    \leq \\
    &\,\leq
    2\GH^C(\chi_1,\chi_2)\leq 2\GH^C(\MP{1},\MP{2}),
    \end{aligned}
    \end{equation}
    where, we recall, $\chi_i$ denotes the chromatic metric space $(\dom\chi_i, \chi_i)$.
\end{proposition}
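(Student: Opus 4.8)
The plan is to reduce the whole statement to the single substantive estimate
$d_{\mathcal L}^{\sigma,\tau}((A_1,\chi_1),(A_2,\chi_2))\le\max\{\dis f,\dis g,\codis(f,g)\}$
for an arbitrary pair of $C$-constrained maps $f\colon X_1\to X_2$, $g\colon X_2\to X_1$; everything else is then an infimum together with a chain of elementary facts about Hausdorff distances of subsets of $[0,\infty]$. Throughout I would use that $\sigma,\tau\in\Tau_C$, so by Proposition~\ref{prop:C-constrained_iff_TC-constrained} every $C$-constrained map is simultaneously $\sigma$- and $\tau$-constrained; this is exactly what makes the maps compatible with the sets $\chi_i^{-1}(\sigma)$ and $\chi_i^{-1}(\tau)$, hence with the invariants.

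I would first isolate two elementary lemmas. \textbf{(a)} For nonempty $A,B\subseteq[0,\infty]$, $|\inf A-\inf B|\le d_H(A,B)$ and $|\sup A-\sup B|\le d_H(A,B)$; this is immediate from the characterization of $d_H$ as the smallest $\delta$ with $A$ in the $\delta$-thickening of $B$ and conversely. \textbf{(b)} For maps $h_i\colon Z_i\to W$ into a metric space $W$, $d_H(h_1(Z_1),h_2(Z_2))=\inf_{R\in\mathcal R(Z_1,Z_2)}\sup_{(z_1,z_2)\in R}d_W(h_1(z_1),h_2(z_2))$; the inequality $\le$ comes from pushing a correspondence on $Z_1,Z_2$ forward along $(h_1,h_2)$, and $\ge$ from pulling a correspondence between the images back by selecting preimages. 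Applying \textbf{(b)} with $h_i=\ecc_{\MP{i}}^{\sigma,\tau}$, resp.\ $\sep_{\MP{i}}^{\sigma,\tau}$, and $Z_i=\chi_i^{-1}(\sigma)$, $W=\R_{\ge 0}$, gives the two middle equalities; combining with $\rad_{\MP{i}}^{\sigma,\tau}=\inf\mathcal E_{\MP{i}}^{\sigma,\tau}$, $\dist_{\MP{i}}^{\sigma,\tau}=\inf\mathcal S_{\MP{i}}^{\sigma,\tau}$ and \textbf{(a)} gives the leftmost inequalities. For the rightmost ones, \textbf{(a)} applied pointwise yields $|\ecc_{\MP{1}}^{\sigma,\tau}(x_1)-\ecc_{\MP{2}}^{\sigma,\tau}(x_2)|\le d_H(\mathcal L_{\MP{1}}^{\sigma,\tau}(x_1),\mathcal L_{\MP{2}}^{\sigma,\tau}(x_2))$ and likewise for $\sep$, so taking $\sup$ over any correspondence and then $\inf$ over correspondences gives $\le d_{\mathcal L}^{\sigma,\tau}$. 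Finally $d_H(\mathcal D_{\MP{1}}^{\sigma,\tau},\mathcal D_{\MP{2}}^{\sigma,\tau})\le d_{\mathcal L}^{\sigma,\tau}$ follows by fixing a near-optimal correspondence $R$ between the $\sigma$-preimages and noting that each element of $\mathcal D_{\MP{1}}^{\sigma,\tau}$ lies in some $\mathcal L_{\MP{1}}^{\sigma,\tau}(x_1)$, hence within $\sup_R d_H(\cdot,\cdot)$ of $\mathcal L_{\MP{2}}^{\sigma,\tau}(x_2)\subseteq\mathcal D_{\MP{2}}^{\sigma,\tau}$ for the partner $x_2$, and symmetrically.

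The core step is the estimate $d_{\mathcal L}^{\sigma,\tau}((A_1,\chi_1),(A_2,\chi_2))\le\mu:=\max\{\dis f,\dis g,\codis(f,g)\}$ for $C$-constrained $f\colon X_1\to X_2$, $g\colon X_2\to X_1$. Put $R=\{(x,f(x))\mid x\in\chi_1^{-1}(\sigma)\}\cup\{(g(y),y)\mid y\in\chi_2^{-1}(\sigma)\}$, which is a correspondence between $\chi_1^{-1}(\sigma)$ and $\chi_2^{-1}(\sigma)$ because $f,g$ are $\sigma$-constrained. I claim $d_H(\mathcal L_{\MP{1}}^{\sigma,\tau}(x_1),\mathcal L_{\MP{2}}^{\sigma,\tau}(x_2))\le\mu$ for every $(x_1,x_2)\in R$. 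For a pair $(x,f(x))$: given $d(x,x')\in\mathcal L_{\MP{1}}^{\sigma,\tau}(x)$ with $x'\in\chi_1^{-1}(\tau)$, $\tau$-constraint of $f$ gives $f(x')\in\chi_2^{-1}(\tau)$, so $d(f(x),f(x'))\in\mathcal L_{\MP{2}}^{\sigma,\tau}(f(x))$ and $|d(x,x')-d(f(x),f(x'))|\le\dis f\le\mu$; conversely, given $d(f(x),y')\in\mathcal L_{\MP{2}}^{\sigma,\tau}(f(x))$ with $y'\in\chi_2^{-1}(\tau)$, $\tau$-constraint of $g$ gives $g(y')\in\chi_1^{-1}(\tau)$, so $d(x,g(y'))\in\mathcal L_{\MP{1}}^{\sigma,\tau}(x)$ and $|d(f(x),y')-d(x,g(y'))|\le\codis(f,g)\le\mu$. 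Pairs $(g(y),y)$ are handled symmetrically, with $\dis g$ controlling one direction and $\codis(f,g)$ the other. Hence $\sup_{(x_1,x_2)\in R}d_H(\mathcal L_{\MP{1}}^{\sigma,\tau}(x_1),\mathcal L_{\MP{2}}^{\sigma,\tau}(x_2))\le\mu$, so $d_{\mathcal L}^{\sigma,\tau}\le\mu$; taking the infimum over $C$-constrained $f,g$ gives $d_{\mathcal L}^{\sigma,\tau}\le 2\GH^C(\chi_1,\chi_2)$. The last inequality $\GH^C(\chi_1,\chi_2)\le\GH^C(\MP{1},\MP{2})$ holds because a $C$-constrained pair of maps $A_1\to A_2$, $A_2\to A_1$ restricts along $X_1\subseteq A_1$, $X_2\subseteq A_2$ (using $\{\N\}$-constraint, i.e.\ $f(X_1)\subseteq X_2$) to a $C$-constrained pair $X_1\to X_2$, $X_2\to X_1$ whose distortions and codistortion do not increase.

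The main obstacle is the bookkeeping inside the core step: one must track which of $\dis f$, $\dis g$, $\codis(f,g)$ bounds each of the four containments, and which of $f,g$ is used to transport a witness into the correct $\tau$-coloured preimage; the rest is routine. One point of hygiene worth flagging is that the invariants may equal $+\infty$ (if $\chi_i^{-1}(\tau)$ is unbounded) or be degenerate (if a preimage is empty), so the proof should be read in $[0,\infty]$, with the observation that whenever one of the four sets $\chi_i^{-1}(\sigma),\chi_i^{-1}(\tau)$ is empty while its counterpart in the other space is not, there is no $C$-constrained pair of maps, so $\GH^C(\chi_1,\chi_2)=\infty$ and every inequality in the statement holds trivially.
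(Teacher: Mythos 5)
Your proposal is correct and follows essentially the same route as the paper: the elementary Hausdorff-distance facts dispose of the outer inequalities and the two equalities, and the core estimate uses the same correspondence $R=\gr(f|_{\chi_1^{-1}(\sigma)})\cup(\gr(g|_{\chi_2^{-1}(\sigma)}))^{-1}$ together with the same $\dis f$/$\codis(f,g)$ bookkeeping (your two-containment argument is just the paper's correspondence $S$ between $\mathcal L_{\MP{1}}^{\sigma,\tau}(x_1)$ and $\mathcal L_{\MP{2}}^{\sigma,\tau}(x_2)$ read as mutual thickenings). Your explicit treatment of the final restriction inequality and of the degenerate/infinite cases is slightly more detailed than the paper's, but the substance is identical.
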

\begin{proof}
Most of the inequalities are easy to verify---relying on the observation that the difference between the infima (or suprema) of two subsets of $\R$ are always upper-bounded by their Hausdorff distance. For the equalities, a correspondence on either side can be used to construct a correspondence on the other such that the differences considered under the suprema are the same.

Let us focus on the most relevant inequality for our purposes, i.e., the second one in \eqref{eq:stable_chromatic_invariants}. Fix $\varepsilon>0$, and two $C$-constrained maps $f\colon\chi_1\to\chi_2$ and $g\colon\chi_2\to\chi_1$ 
such that
\[
    \max\{\dis f,\dis g,\codis(f,g)\}\leq 2\GH^C(\chi_1,\chi_2)+\varepsilon.
\]
Define $R=\gr(f|_{\chi_1^{-1}(\sigma)})\cup(\gr(g|_{\chi_2^{-1}(\sigma)}))^{-1}$, which is a correspondence between $\chi_1^{-1}(\sigma)$ and $\chi_2^{-1}(\sigma)$.
Fix $(x_1,x_2)\in R$. By definition, either $f(x_1)=x_2$ or $x_1=g(x_2)$. Let us consider the former case; the latter can be treated analogously. We upper-bound the Hausdorff distance between $\mathcal L_{\MP{1}}^{\sigma,\tau}(x_1)$ and $\mathcal L_{\MP{2}}^{\sigma,\tau}(x_2)$ by considering the correspondence \[
    S = \setdef{(d_1(x_1, x'), d_2(x_2, f(x')))}{x'\in\chi_1^{-1}(\tau)} \cup \setdef{(d_1(x_1, g(y')), d_2(x_2, y'))}{y'\in\chi_2^{-1}(\tau)}.
\] 
For every $x'\in\chi_1^{-1}(\tau)$ and $y'\in\chi_2^{-1}(\tau)$, we have
\begin{align*}
&\lvert d_1(x_1,x')-d_2(x_2,f(x'))\rvert = \lvert d_1(x_1,x')-d_2(f(x_1),f(x'))\rvert \leq \dis f,
\\
&\lvert d_1(x_1,g(y'))-d_2(x_2,y')\rvert = \lvert d_1(x_1,g(y'))-d_2(f(x_1),y')\rvert \leq \codis(f,g),
\end{align*}
and so 
\begin{equation}\label{eq:stability}
    d_H(\mathcal L_{\MP{1}}^{\sigma,\tau}(x_1),\mathcal L_{\MP{2}}^{\sigma,\tau}(x_2))\leq\max\{\dis f,\codis(f,g)\}.
\end{equation}
For the analogous inequality in the case where $x_1=g(x_2)$, we need to use $\dis g$ in the argument of the maximum on the right-hand side of \eqref{eq:stability}.

Hence, we have shown that
\[
    \sup_{(x_1,x_2)\in R}d_H(\mathcal L_{\MP{1}}^{\sigma,\tau}(x_1),\mathcal L_{\MP{2}}^{\sigma,\tau}(x_2))\leq\max\{\dis f,\dis g,\codis(f,g)\}\leq 2\GH^C(\chi_1,\chi_2)+\varepsilon.
\]
Since $\varepsilon$ can be taken arbitrarily small, we have obtained the desired inequality.
\end{proof}

\begin{example}\label{ex:chromatic_invariants}
    Let us show examples where these new invariants can be applied to compute the $C$-constrained Gromov-Hausdorff distance. We intend to provide instances where even the weakest invariants (i.e., those with weaker discerning power according to Proposition \ref{prop:chromatic_invariants}) provide optimal lower bounds.

\begin{compactenum}[(i)]    
\item We construct two chromatic metric spaces $\chi_1$ and $\chi_2$ and a constraint set $\N\in C\subseteq\mathcal P(\N)$ such that
\begin{compactenum}[(a)]
\item $\chi_1^{-1}(\sigma)$ and $\chi_2^{-1}(\sigma)$ are isometric for every $\sigma\in C$, and
\item there are two elements $\sigma$ and $\tau$ of an open partition of $(\N,\Tau_C)$ such that $\GH^C(\chi_1,\chi_2)=\lvert\rad^{\sigma,\tau}_{\chi_1}-\rad^{\sigma,\tau}_{\chi_2}\rvert/2$.
\end{compactenum}

Consider the following two chromatic metric spaces: take the subspace $X$ of $\R^2$ endowed with the $1$-norm defined by
    \[X'=[(0,0),(1,1)]\cup[(1,1),(0,2)], \quad X''= [(1,0),(2,1)]\cup[(2,1),(1,2)],\quad \text{and}\quad X=X'\cup X'',\] 
    where $[a,b]$ denotes the usual segment in the plane between $a$ and $b$, and 
    \[
    \chi_1\colon \begin{cases}
        \begin{aligned}
            X'&\,\mapsto 0,\\
            X'' &\,\mapsto 1,
        \end{aligned}
    \end{cases}\quad\text{and}\quad
    \chi_2\colon \begin{cases}
        \begin{aligned}
            X'&\,\mapsto 1,\\
            X'' &\,\mapsto 0,
        \end{aligned}
    \end{cases}
    \]
    (see Figure \ref{fig:chromatic_invariants}).
    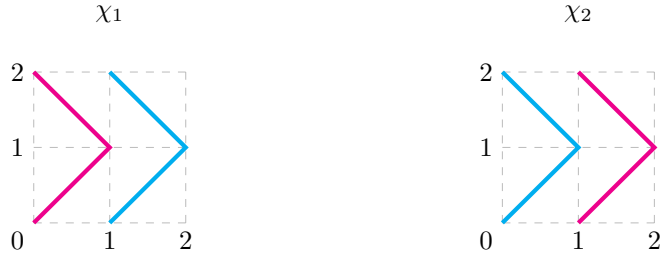
\begin{figure}[h!]
        \centering
        \begin{subfigure}{0.38\textwidth}
            \centering
            \begin{tikzpicture}
                \draw[lightgray, dashed] (1,0)--(1,2) (2,0)--(2,2) (0,1)--(2,1) (0,0)--(0,2) (0,0)--(2,0) (0,2)--(2,2);
                \draw (0,0) node[below left]{$0$};
                \draw (1,0) node[below]{$1$};
                \draw (2,0) node[below]{$2$};
                \draw (0,1) node[left]{$1$};
                \draw (0,2) node[left]{$2$};
                \draw[magenta,ultra thick] (0,0)--(1,1)--(0,2);
                \draw[cyan,ultra thick] (1,0)--(2,1)--(1,2);
                \draw (1,3) node[below]{$\chi_1$};
            \end{tikzpicture}
        \end{subfigure}
        \begin{subfigure}{0.38\textwidth}
            \centering
            \begin{tikzpicture}
                \draw[lightgray, dashed] (1,0)--(1,2) (2,0)--(2,2) (0,1)--(2,1) (0,0)--(0,2) (0,0)--(2,0) (0,2)--(2,2);
                \draw (0,0) node[below left]{$0$};
                \draw (1,0) node[below]{$1$};
                \draw (2,0) node[below]{$2$};
                \draw (0,1) node[left]{$1$};
                \draw (0,2) node[left]{$2$};
                \draw[cyan,ultra thick] (0,0)--(1,1)--(0,2);
                \draw[magenta,ultra thick] (1,0)--(2,1)--(1,2);
                \draw (1,3) node[below]{$\chi_2$};
            \end{tikzpicture}
        \end{subfigure}
        \caption{\footnotesize A representation of the chromatic metric spaces $\chi_1$ and $\chi_2$ defined in Example~\ref{ex:chromatic_invariants}(i).
        }\label{fig:chromatic_invariants}
    \end{figure}

    Consider the discrete constraint set $C_D$. In this case, it is easy to see that $\chi_1^{-1}(\sigma)$ is isometric to $\chi_2^{-1}(\sigma)$ for every $\sigma\in C_D$, and so the usual Gromov-Hausdorff distance does not help providing lower bounds to the $C_D$-constrained Gromov-Hausdorff distance. 
    
    We intend to apply Proposition \ref{prop:chromatic_invariants}. Consider two singletons $\{0\}$ and $\{1\}$, which form an open partition (more formally, $\{0\}$ and $\N\setminus\{0\}$ do that). It is easy to see that $\rad_{\chi_1}^{\{0\},\{1\}}=1$ (it is enough to consider the point $(1,1)$). However, $\rad_{\chi_2}^{\{0\},\{1\}}=3$. Therefore, 
    \[
    \GH^{C_D}(\chi_1,\chi_2)\geq\frac{1}{2}\lvert\rad_{\chi_1}^{\{0\},\{1\}}-\rad_{\chi_2}^{\{0\},\{1\}}\rvert= 1.
    \]

    Consider the intuitive map $f\colon \chi_1 \to \chi_2$ swapping $X'$ with $X''$ and its inverse $f^{-1}$. They satisfy 
    \[
    \max\{\dis f,\dis(f^{-1}),\codis(f,f^{-1})\}=2,
    \]
    and so $\GH^{C_D}(\chi_1,\chi_2)=1$.
\item We provide a chromatic metric space $\chi_1$, a sequence $\{\chi_\varepsilon\}_{0<\varepsilon\leq1}$ of chromatic metric spaces, and a constraint set $\N\in C\subseteq\mathcal P(\N)$ such that
\begin{compactenum}[(a)]
\item $\GH(\chi_1^{-1}(\sigma),\chi_2^{-1}(\sigma))\leq\varepsilon$ for every $\sigma\in C$, and
\item there are two elements $\sigma$ and $\tau$ of an open partition of $(\N,\Tau_C)$ such that $\GH^C(\chi_1,\chi_\varepsilon)=\lvert\dist^{\sigma,\tau}_{\chi_1}-\dist^{\sigma,\tau}_{\chi_\varepsilon}\rvert/2$.
\end{compactenum}
Let us define the following chromatic metric spaces: for $0<\varepsilon\leq 1$, $(\N\cup\{-\varepsilon\},\chi_\varepsilon)$, where 
\[
\chi_\varepsilon\colon\begin{cases}
    \begin{aligned}
        \N &\mapsto 0,\\
        -\varepsilon&\mapsto 1,
    \end{aligned}
\end{cases}
\]
and the metric is induced by the usual distance on the real line (see Figure \ref{fig:dist}).
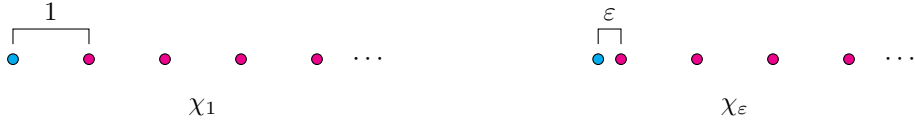
\begin{figure}[h!]
    \centering
    \begin{tikzpicture}
        \fill[magenta] (0,0) circle (2pt) (1,0) circle (2pt) (2,0) circle (2pt) (3,0) circle (2pt);
        \fill[cyan] (-1,0) circle (2pt);
        \draw (0,0) circle (2pt) (1,0) circle (2pt) (2,0) circle (2pt) (3,0) circle (2pt);
        \draw (-1,0) circle (2pt);
        \draw (3.7,0) node{$\cdots$};
        \draw (-1,0.2)--(-1,0.4)--(0,0.4)node[pos=0.5,above]{$1$}--(0,0.2);
        \draw (1.5,-0.4) node[below]{$\chi_1$};
            
        \fill[magenta] (7,0) circle (2pt) (8,0) circle (2pt) (9,0) circle (2pt) (10,0) circle (2pt);
        \fill[cyan] (6.7,0) circle (2pt);
        \draw (7,0) circle (2pt) (8,0) circle (2pt) (9,0) circle (2pt) (10,0) circle (2pt);
        \draw (6.7,0) circle (2pt);
        \draw (10.7,0) node{$\cdots$};
        \draw (6.7,0.2)--(6.7,0.4)--(7,0.4)node[pos=0.5,above]{$\varepsilon$}--(7,0.2);
        \draw (8.5,-0.4) node[below]{$\chi_\varepsilon$};
    \end{tikzpicture}
    \caption{\footnotesize A representation of the chromatic metric pairs $\chi_1$ and $\chi_\varepsilon$ defined in Example  \ref{ex:chromatic_invariants}(ii).}
    \label{fig:dist}
\end{figure}
Let us estimate the $C_D$-constrained Gromov-Hausdorff distance between $\chi_1$ and $\chi_\varepsilon$. Clearly, $\chi_1^{-1}(0)$ and $\chi_\varepsilon^{-1}(0)$, and $\chi_1^{-1}(1)$ and $\chi_\varepsilon^{-1}(1)$ are pairwise isometric. Define two maps,  $f_\varepsilon\colon\N\cup\{-1\}\to\N\cup\{-\varepsilon\}$ and $g_\varepsilon\colon\N\cup\{-\varepsilon\}\to\N\cup\{-\varepsilon\}$ as follows:
\[
f_\varepsilon\colon n\mapsto n+1,\text{ for $n\in\N\cup\{-1\}$, and }g_\varepsilon\colon\begin{cases}
    \begin{aligned}
        -\varepsilon&\mapsto -1,\\
        n&\mapsto n-1,\text{ for $n\in\N$}
    \end{aligned}
\end{cases}
\]
(see Figure \ref{fig:dist_2}). This pair of maps shows that 
\[
2\GH(\chi_1^{-1}(\N),\chi_\varepsilon^{-1}(\N))\leq\max\{\dis f_\varepsilon,\dis g_\varepsilon,\codis(f_\varepsilon,g_\varepsilon)\}=\varepsilon.
\]
However, we immediately see that neither map is $C_D$-constrained.

A $C_D$-constrained bijection $h\colon\chi_1\to\chi_\varepsilon$ can be defined as $h(n)=n$ for every $n\in\N$, and $h(-1)=\varepsilon$. Note that also $h^{-1}$ is $C_D$-constrained (see again Figure \ref{fig:dist_2}). Then,
\[
2\GH^{C_D}(\chi_1,\chi_\varepsilon)\leq\max\{\dis h,\dis h^{-1},\codis(h,h^{-1})\}=1-\varepsilon.
\]

\begin{figure}
[h!]
    \centering
    \begin{tikzpicture}
        \draw[->,dashed,thick] (-1,0) to [out=-40,in=90] (0,-2);
        \draw[->,dashed,thick] (0,0)to [out=-40,in=90](1,-2); 
        \draw[->,dashed,thick] (1,0)to [out=-40,in=90](2,-2);
        \draw[->,dashed,thick] (2,0)to [out=-40,in=90](3,-2);
        \draw[->,dotted,thick] (0,-2) to [out=130,in=-90](-1,0) ;
        \draw[->,dotted,thick] (1,-2) to [out=130,in=-90] (0,0); 
        \draw[->,dotted,thick] (2,-2)to [out=130,in=-90](1,0);
        \draw[->,dotted,thick] (3,-2)to [out=130,in=-90](2,0);
        \draw[->,dotted,thick] (-0.3,-2)to [out=120,in=-90](-1,0);
    
        \fill[magenta] (0,0) circle (2pt) (1,0) circle (2pt) (2,0) circle (2pt) (3,0) circle (2pt);
        \fill[cyan] (-1,0) circle (2pt);
        \draw (0,0) circle (2pt) (1,0) circle (2pt) (2,0) circle (2pt) (3,0) circle (2pt);
        \draw (-1,0) circle (2pt);
        \draw (3.7,0) node{$\cdots$};
        \draw (-1,0.2)--(-1,0.4)--(0,0.4)node[pos=0.5,above]{$1$}--(0,0.2);
        \draw (1.5,0.4) node[above]{$\chi_1$};
        
        \fill[magenta] (0,-2) circle (2pt) (1,-2) circle (2pt) (2,-2) circle (2pt) (3,-2) circle (2pt);
        \fill[cyan] (-0.3,-2) circle (2pt);
        \draw (0,-2) circle (2pt) (1,-2) circle (2pt) (2,-2) circle (2pt) (3,-2) circle (2pt);
        \draw (-0.3,-2) circle (2pt);
        \draw (3.7,-2) node{$\cdots$};
        \draw (-0.3,-2.2)--(-0.3,-2.4)--(0,-2.4)node[pos=0.5,below]{$\varepsilon$}--(0,-2.2);
        \draw (1.5,-2.4) node[below]{$\chi_\varepsilon$};

        \draw[->] (7,0)--(7,-2); 
        \draw[->] (8,0)--(8,-2);
        \draw[->] (9,0)--(9,-2);
        \draw[<-] (10,-2)--(10,0);
        \draw[<-] (6.7,-2)--(6,0);
    
        \fill[magenta] (7,0) circle (2pt) (8,0) circle (2pt) (9,0) circle (2pt) (10,0) circle (2pt);
        \fill[cyan] (6,0) circle (2pt);
        \draw (7,0) circle (2pt) (8,0) circle (2pt) (9,0) circle (2pt) (10,0) circle (2pt);
        \draw (6,0) circle (2pt);
        \draw (10.7,0) node{$\cdots$};
        \draw (6,0.2)--(6,0.4)--(7,0.4)node[pos=0.5,above]{$1$}--(7,0.2);
        \draw (8.5,0.4) node[above]{$\chi_1$};
        
        \fill[magenta] (7,-2) circle (2pt) (8,-2) circle (2pt) (9,-2) circle (2pt) (10,-2) circle (2pt);
        \fill[cyan] (6.7,-2) circle (2pt);
        \draw (7,-2) circle (2pt) (8,-2) circle (2pt) (9,-2) circle (2pt) (10,-2) circle (2pt);
        \draw (6.7,-2) circle (2pt);
        \draw (10.7,-2) node{$\cdots$};
        \draw (6.7,-2.2)--(6.7,-2.4)--(7,-2.4)node[pos=0.5,below]{$\varepsilon$}--(7,-2.2);
        \draw (8.5,-2.4) node[below]{$\chi_\varepsilon$};
    \end{tikzpicture}
    \caption{\footnotesize A representation of the pairs of maps used to show upper bounds in Example  \ref{ex:chromatic_invariants}(ii). On the left-hand side, we represent the maps $f_\varepsilon$ (dashed) pointing downwards and $g_\varepsilon$ (dotted) pointing upwards. On the right-hand side, we show the $C_D$-constrained bijection $h$.}
    \label{fig:dist_2}
\end{figure}
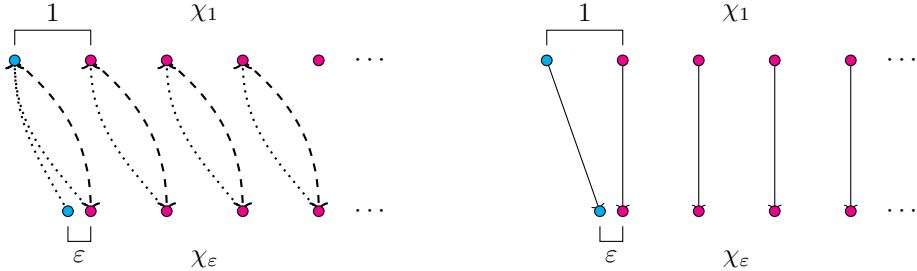

We intend to show that $2\GH^{C_D}(\chi_1,\chi_\varepsilon)\geq 1-\varepsilon$, thus achieving the equality. Consider, as in item (i), the two singletons $\{0\}$ and $\{1\}$ which form an open partition. Then, $\dist_{\chi_1}^{\{0\},\{1\}}=1$ and $\dist_{\chi_\varepsilon}^{\{0\},\{1\}}=\varepsilon$. Hence, Proposition \ref{prop:chromatic_invariants} implies that
\[
2\GH^{C_D}(\chi_1,\chi_\varepsilon)\geq\lvert\dist_{\chi_1}^{\{0\},\{1\}}-\dist_{\chi_\varepsilon}^{\{0\},\{1\}}\rvert=1-\varepsilon. 
\]

The reader may notice that the same lower bound could be achieved by considering the distance between $\rad_{\chi_1}^{\{0\},\{1\}}$ and $\rad_{\chi_2}^{\{0\},\{1\}}$. However, the example can be modified to make this second approach not fruitful (see Figure \ref{fig:dist_3}).
\begin{figure}
[h!]
    \centering
    \begin{tikzpicture}
        \fill[magenta] (0,0) circle (2pt) (1,0) circle (2pt) (2,0) circle (2pt) (3,0) circle (2pt);
        \fill[cyan] (-1,0) circle (2pt) (-2,0) circle (2pt) (-3,0) circle (2pt) (-4,0) circle (2pt);
        \draw (0,0) circle (2pt) (1,0) circle (2pt) (2,0) circle (2pt) (3,0) circle (2pt);
        \draw (-1,0) circle (2pt) (-2,0) circle (2pt) (-3,0) circle (2pt) (-4,0) circle (2pt);
        \draw (3.7,0) node{$\cdots$};
        \draw (-4.7,0) node{$\cdots$};
        \draw (-1,-0.2)--(-1,-0.4)--(0,-0.4)node[pos=0.5,below]{$1$}--(0,-0.2);
        \draw (-0.5,0.4) node[above]{$\chi'_1$};
        
        \fill[magenta] (0,-2) circle (2pt) (1,-2) circle (2pt) (2,-2) circle (2pt) (3,-2) circle (2pt);
        \fill[cyan] (-0.3,-2) circle (2pt) (-1.3,-2) circle (2pt) (-2.3,-2) circle (2pt) (-3.3,-2) circle (2pt);
        \draw (0,-2) circle (2pt) (1,-2) circle (2pt) (2,-2) circle (2pt) (3,-2) circle (2pt);
        \draw (-0.3,-2) circle (2pt) (-1.3,-2) circle (2pt) (-2.3,-2) circle (2pt) (-3.3,-2) circle (2pt);
        \draw (3.7,-2) node{$\cdots$};        
        \draw (-4,-2) node{$\cdots$};
        \draw (-0.3,-1.8)--(-0.3,-1.6)--(0,-1.6)node[pos=0.5,above]{$\varepsilon$}--(0,-1.8);
        \draw (-0.5,-2.4) node[below]{$\chi'_\varepsilon$};
    \end{tikzpicture}
    \caption{\footnotesize A modification of the chromatic metric spaces provided in Example \ref{ex:chromatic_invariants}(ii). The two underlying metric spaces of $\chi_1'$ and $\chi_\varepsilon'$ are the subspaces $\Z$ and $\N\cup(-\N-\varepsilon)=\N\cup\{-n-\varepsilon\mid n\in\N\}$ of the real line, respectively. The coloring functions satisfy $\chi_1'\colon\N\mapsto 0$, $\chi_1'\colon\Z\setminus\N\mapsto 1$, $\chi_\varepsilon'\colon\N\mapsto 0$ and $\chi_\varepsilon'\colon-\N-\varepsilon\mapsto 1$. As in Example \ref{ex:chromatic_invariants}(ii), $\GH^{C_D}(\chi_1',\chi_\varepsilon')=\tfrac{1-\varepsilon}{2}$, which can be shown using an immediate adaptation of the map $h$ and comparing $\dist_{\chi_1'}^{\{0\},\{1\}}=1$ and $\dist_{\chi_\varepsilon'}^{\{0\},\{1\}}=\varepsilon$. However, $\rad_{\chi_1'}^{\{0\},\{1\}}=\rad_{\chi_\varepsilon'}^{\{0\},\{1\}}=\infty$, and so the lower bound cannot be alternatively provided.}
    \label{fig:dist_3}
\end{figure}
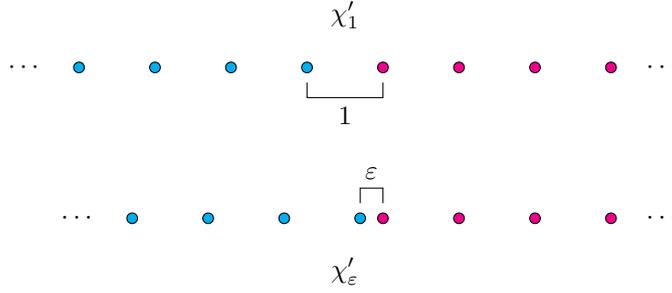

\end{compactenum}
\end{example}

\subsection{Characterizations of the $C$-constrained Gromov-Hausdorff distance}\label{sub:characterisations}

Let $\sigma\subseteq\N$ be a constraint. Given two chromatic metric pairs $\MP{1}$ and $\MP{2}$, we say that a correspondence $R\in\mathcal R(A_1,A_2)$ is {\em $\sigma$-constrained} if $R|_{\chi_1^{-1}(\sigma)\times\chi_2^{-1}(\sigma)}$ is a correspondence. If $C$ is a family of subsets of $\N$ containing $\N$, we say that a correspondence $R\in\mathcal R(A_1,A_2)$ is {\em $C$-constrained} if $R$ is $\sigma$-constrained for every $\sigma\in C$. Let us denote by $\mathcal R^C(\MP{1},\MP{2})$ the set of all $C$-constrained correspondences between $\MP{1}$ and $\MP{2}$.

\begin{theorem}\label{thm:characterization_via_correspondences}
	Let $\extMP{1}$ and $\extMP{2}$ be two chromatic metric pairs. Let $\N\in C\subseteq\mathcal P(\N)$ be a constraint set.
    Then, for every constraint set $\Sigma_C\subseteq C'\subseteq\Tau_C$ (see Fact \ref{fact:C_n_and_sigma_n} for the definition of $\Sigma_C$),
	\[
	\GH^C(\MP{1},\MP{2})=\GH^{C'}(\MP{1},\MP{2})=\frac{1}{2}\inf_{R\in\mathcal R^{C'}(\MP{1},\MP{2})}\dis R.
	\]
\end{theorem}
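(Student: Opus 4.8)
The plan is to adapt the proof of the Kalton--Ostrovskii characterization (Theorem~\ref{thm:Kalton_Ostrovskii}), carrying the colour constraints along. The first equality $\GH^C(\MP{1},\MP{2})=\GH^{C'}(\MP{1},\MP{2})$ is immediate from Remark~\ref{rem:C-constrained_GH_and_other_GH}(v): since $\Sigma_C\subseteq C'\subseteq\Tau_C$, Proposition~\ref{prop:C-constrained_iff_TC-constrained} shows that $C$, $C'$ and $\Tau_C$ admit exactly the same constrained maps, so the three infima defining these distances agree. It thus remains to prove $\GH^{C'}(\MP{1},\MP{2})=\tfrac{1}{2}\inf_{R\in\mathcal R^{C'}(\MP{1},\MP{2})}\dis R$. (We may assume $\N\in C'$: if not, enlarging $C'$ to $C'\cup\{\N\}$ imposes no new constraint on maps or correspondences, because $\bigcup_{n}\sigma_n=\N$ by Fact~\ref{fact:C_n_and_sigma_n}(i) together with $\Sigma_C\subseteq C'$.)

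One inequality is obtained by turning maps into a correspondence, as in the second half of the proof of Theorem~\ref{thm:Kalton_Ostrovskii}. Given $C'$-constrained maps $f\colon A_1\to A_2$ and $g\colon A_2\to A_1$, set $R=\gr f\cup(\gr g)^{-1}$; then $R$ is a correspondence between $A_1$ and $A_2$ with $\dis R=\max\{\dis f,\dis g,\codis(f,g)\}$, the three contributions coming from pairs both in $\gr f$, both in $(\gr g)^{-1}$, and one in each. Moreover $R\in\mathcal R^{C'}(\MP{1},\MP{2})$: for $\sigma\in C'$, every $x\in\chi_1^{-1}(\sigma)$ is matched by $(x,f(x))\in R$ with $f(x)\in\chi_2^{-1}(\sigma)$ since $f$ is $\sigma$-constrained, and symmetrically for $\sigma$-coloured points of $A_2$ via $g$. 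Taking the infimum over $f$ and $g$ gives $\tfrac{1}{2}\inf_{R}\dis R\le\GH^{C'}(\MP{1},\MP{2})$.

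The opposite inequality is obtained by turning a correspondence into maps, and here the hypothesis $\Sigma_C\subseteq C'$ is essential. Let $R\in\mathcal R^{C'}(\MP{1},\MP{2})$. Define $f\colon A_1\to A_2$ by choosing, for $x\notin X_1$, any $f(x)$ with $(x,f(x))\in R$, and for $x\in X_1$ with $\chi_1(x)=n$, some $f(x)\in\chi_2^{-1}(\sigma_n)$ with $(x,f(x))\in R$ --- possible because $R$ is $\sigma_n$-constrained, $\sigma_n$ being an element of $\Sigma_C\subseteq C'$. Define $g\colon A_2\to A_1$ symmetrically. Then $f$ is $C'$-constrained: it is $\Sigma_C$-constrained, since $\chi_1(x)=n\in\sigma_k$ forces $\sigma_n\subseteq\sigma_k$ by Fact~\ref{fact:C_n_and_sigma_n}(ii), hence $f(x)\in\chi_2^{-1}(\sigma_n)\subseteq\chi_2^{-1}(\sigma_k)$; and a $\Sigma_C$-constrained map is $\Tau_C$-constrained by Proposition~\ref{prop:C-constrained_iff_TC-constrained}, hence $C'$-constrained because $C'\subseteq\Tau_C$. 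Exactly as in the proof of Theorem~\ref{thm:Kalton_Ostrovskii}, $\dis f\le\dis R$, $\dis g\le\dis R$, and $\codis(f,g)\le\dis R$ (the last because $(x,f(x)),(g(y),y)\in R$ for all $x\in A_1$ and $y\in A_2$), so $\max\{\dis f,\dis g,\codis(f,g)\}\le\dis R$; taking infima yields $\GH^{C'}(\MP{1},\MP{2})\le\tfrac{1}{2}\inf_{R}\dis R$.

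The one genuinely new point --- and the step I expect to be the main obstacle --- is the simultaneous choice of $f(x)$ in the correspondence-to-maps direction: a $C'$-constrained correspondence only guarantees, for each constraint separately, a matched partner of the appropriate colour, whereas $f(x)$ must lie in $\chi_2^{-1}(\sigma)$ for \emph{all} $\sigma\in C'$ with $\chi_1(x)\in\sigma$ at once. This is resolved by the observation that every such $\sigma$ contains $\sigma_n$ (Proposition~\ref{prop:T_C}(ii), using $C'\subseteq\Tau_C$), so a single partner picked in $\chi_2^{-1}(\sigma_n)$ --- available because $\sigma_n\in\Sigma_C\subseteq C'$ and $R$ is $\sigma_n$-constrained --- works for all of them simultaneously; this is precisely why the range $\Sigma_C\subseteq C'\subseteq\Tau_C$ is the natural one to state the theorem for. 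The remaining estimates are a verbatim transcription of the uncoloured arguments.
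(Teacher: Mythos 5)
Your proof is correct and follows essentially the same route as the paper's: the first equality via Remark~\ref{rem:C-constrained_GH_and_other_GH}(v), the maps-to-correspondence direction via $R=\gr f\cup(\gr g)^{-1}$, and the correspondence-to-maps direction by choosing partners inside $\chi_2^{-1}(\sigma_n)$ for $n=\chi_1(x)$, which is exactly where the paper also uses $\Sigma_C\subseteq C'$. Your explicit justification that the resulting map is $\Sigma_C$-constrained (via Fact~\ref{fact:C_n_and_sigma_n}(ii)) and hence $C'$-constrained is a welcome elaboration of a step the paper leaves implicit.
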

\begin{proof}
    From the assumption on $C'$, $\Tau_C=\Tau_{C'}$, and thus the first equality follows as in 
    Remark~\ref{rem:C-constrained_GH_and_other_GH}(v). 
    
    Let us now discuss the second one. The proof is essentially the same of that of Theorem~\ref{thm:Kalton_Ostrovskii}. Given two $C'$-constrained maps $f\colon A_1\to A_2$ and $g\colon A_2\to A_1$, we need to show that $R=\gr(f)\cup(\gr(g))^{-1}$ is a $C'$-constrained correspondence. This fact is trivial since, for every $\sigma\in C'$, if $x\in\chi_1^{-1}(\sigma)$, then $f(x)\in\chi_2^{-1}(\sigma)$ and $(x,f(x))\in R$. A similar conclusion can be shown for every point in $\chi_2^{-1}(\sigma)$. Thus, $R$ is $\sigma$-constrained. 
	
	Let us consider the converse construction. 
    Fix a $C'$-constrained correspondence $R$. In particular, $R$ is $\Sigma_C$-constrained. For every $x\in A_1\setminus X_1$, pick an arbitrary point $y\in A_2$ such that $(x,y)\in R$ and set $f(x)=y$. Consider now a point $x$ belonging to $X_1$. Set $n=\chi_1(x)$. Choose any point $y\in X_2$ such that $(x,y)\in R|_{\chi_1^{-1}(\sigma_n)\times\chi_2^{-1}(\sigma_n)}$, which exists since $R|_{\chi_1^{-1}(\sigma_n)\times\chi_2^{-1}(\sigma_n)}$ is a correspondence. Define $f(x)=y$. Then, the so-defined map $f$ is $\Sigma_C$-constrained and so $C'$-constrained. Similarly, we can define a map $g$ in the opposite direction.
\end{proof}

Let us discuss the further assumption on the constraint set added in the statement Theorem \ref{thm:characterization_via_correspondences}.

\begin{remark}\label{rem:why_intersection?}
	The reader may wonder why we used $C'$ rather than the original constraint set $C$ in Theorem~\ref{thm:characterization_via_correspondences}; something we did not do in Definition~\ref{def:C-constrained_GH}.
	
	First of all, we note that, according to Remark~\ref{rem:C-constrained_GH_and_other_GH}(v), it is not a restriction on constraint sets we can use. Indeed, $\Tau_C=\Tau_{C'}$ (Proposition~\ref{prop:C-constrained_iff_TC-constrained}) and so the two constraint sets have the same strength.
	
	Secondly, if we consider a constraint set $C'$ that has the same strength as $C$, but does not contain $\Sigma_C$ (e.g., potentially even $C$ itself), 
    
    we can construct examples of pairs of chromatic metric pairs where there are $C'$-constrained correspondences, but no $C$-constrained map can be defined. Take two chromatic metric pairs 
	\[
	(\{a,b,c\},\chi_1\colon a\mapsto 0,\,b\mapsto 1,\, c\mapsto 2)\quad\text{and}\quad (\{x,y\},\chi_2\colon x\mapsto 0,\, y\mapsto 2)
	\]
    (we do not specify the distances since they do not play any role). Let $C=\{\{0,1\},\{1,2\}\}$. Then, $R=\{(a,x),(b,x),(b,y),(c,y)\}$ is a $C$-constrained correspondence. However, there exists no $C$-constrained map $f\colon\{a,b,c\}\to\{x,y\}$ since, wherever $b$ is sent, it violates one of the two constraints. We represent the situation in Figure \ref{fig:C_closed_under_intersection}.
	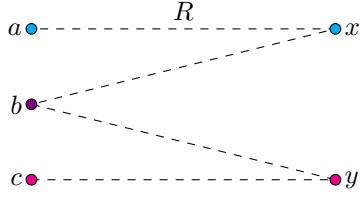
\begin{figure}[h!]
		\centering
		\begin{tikzpicture}
			\draw (0,0) node[left]{$c$};
			\draw (0,1) node[left]{$b$};
			\draw (0,2) node[left]{$a$};
			\draw (4,0) node[right]{$y$};
			\draw (4,2) node[right]{$x$};
			\draw (2,2) node[above]{$R$};
			\draw[dashed] (0,0)--(4,0)--(0,1)--(4,2)--(0,2); 
			\fill[magenta] (0,0) circle (2pt) (4,0) circle (2pt);
			\fill[violet] (0,1) circle (2pt);
			\fill[cyan] (0,2) circle (2pt) (4,2) circle (2pt);
            \draw (0,0) circle (2pt) (4,0) circle (2pt);
			\draw (0,1) circle (2pt);
			\draw (0,2) circle (2pt) (4,2) circle (2pt);
		\end{tikzpicture}
	\caption{\footnotesize A representation of the example constructed in Remark \ref{rem:why_intersection?}. The three colors $0,1,2$ are represented in cyan, violet and magenta, respectively. Since both $R|_{\{a,b\}\times\{x\}}$ and $R|_{\{b,c\}\times\{y\}}$ are correspondences, $R$ is a $C$-constrained correspondence. However, if a map sends the violet-colored point $b$ to $x$, it violates the constraint $\{\text{violet, magenta}\}$. Alternatively, if it is sent to $y$, it violates $\{\text{violet, cyan}\}$. }\label{fig:C_closed_under_intersection}
	\end{figure}
\end{remark}

\begin{definition}
	Let $\MP{}$ be a chromatic metric pair and $Y$ and $Z$ be two subspaces thereof. Let $\N\in C\subseteq\mathcal P(\N)$ be a constraint set. Then, their {\em $C$-constrained Hausdorff distance} is defined as
	\[
	d_H^C(Y,Z)=\newinf_{R\in\mathcal R^C(Y,Z)}\sup_{(y,z)\in R}d(y,z).
	\]
\end{definition}

There is a fundamental difference between the chromatic versions of the Gromov-Hausdorff and the Hausdorff distance. In the latter, the embeddings of the subspaces are already given. Therefore, measuring the distances of individual subspaces given by various sets of colors $\sigma$ can happen independently of each other, and we get the following relation.

\begin{proposition}\label{prop:d_H_rewritten}
Let $(A, \chi\colon X\rightarrow \N)$ be a chromatic metric pair, $Y, Z\subseteq X$ be two subspaces, $\N \in C\subseteq \mathcal{P}(\N)$. For each $\sigma\in C$, let $Y_\sigma=\chi^{-1}(\sigma)\cap Y$ and $Z_\sigma=\chi^{-1}(\sigma)\cap Z$.
    Then \[
        \max\{d_H(Y,Z),\sup_{\sigma\in C} d_H(Y_\sigma, Z_\sigma)\} = d_H^C(Y, Z).
    \]
\end{proposition}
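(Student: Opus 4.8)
The plan is to prove the equality by establishing the two inequalities $(\leq)$ and $(\geq)$ separately, unwinding the definitions of all three Hausdorff-type quantities in terms of correspondences.

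\textbf{First, the inequality $(\geq)$.} Suppose $R\in\mathcal R^C(Y,Z)$ is any $C$-constrained correspondence between $Y$ and $Z$, and let $s=\sup_{(y,z)\in R}d(y,z)$. Since $R$ is in particular a correspondence in $\mathcal R(Y,Z)$, it witnesses $d_H(Y,Z)\leq s$. Moreover, for each $\sigma\in C$, the restriction $R|_{Y_\sigma\times Z_\sigma}$ is a correspondence between $Y_\sigma$ and $Z_\sigma$ by the definition of $\sigma$-constrained, and its edges all have length at most $s$; hence $d_H(Y_\sigma,Z_\sigma)\leq s$. Taking the supremum over $\sigma\in C$ and combining, $\max\{d_H(Y,Z),\sup_{\sigma\in C}d_H(Y_\sigma,Z_\sigma)\}\leq s$. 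Taking the infimum over all $R\in\mathcal R^C(Y,Z)$ gives $(\geq)$.

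\textbf{Next, the inequality $(\leq)$.} This is where the ``embeddings are already given'' observation does the work: because all the subspaces sit inside the fixed metric space $A$, we can build a good $C$-constrained correspondence by taking a \emph{union} of nearest-neighbor correspondences. Fix $\varepsilon>0$ and set $M=\max\{d_H(Y,Z),\sup_{\sigma\in C}d_H(Y_\sigma,Z_\sigma)\}$. Since $d_H(Y,Z)\leq M$, for every $y\in Y$ there is some $z\in Z$ with $d(y,z)\leq M+\varepsilon$, and symmetrically; collect all such nearest-neighbor pairs into a relation $R_\N\subseteq Y\times Z$, which is a correspondence with all edges $\leq M+\varepsilon$. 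Similarly, for each $\sigma\in C$, using $d_H(Y_\sigma,Z_\sigma)\leq M$, build a correspondence $R_\sigma\subseteq Y_\sigma\times Z_\sigma$ with edges $\leq M+\varepsilon$. Let $R=R_\N\cup\bigcup_{\sigma\in C}R_\sigma$. Then $R$ is a relation between $Y$ and $Z$ all of whose edges have length $\leq M+\varepsilon$; it is a correspondence since $R_\N$ already is; and it is $\sigma$-constrained for every $\sigma\in C$ because $R|_{Y_\sigma\times Z_\sigma}\supseteq R_\sigma$ is a correspondence between $Y_\sigma$ and $Z_\sigma$ (note $R_\sigma\subseteq Y_\sigma\times Z_\sigma$, so it contributes to this restriction; edges of $R_\tau$ for $\tau\neq\sigma$ that happen to land in $Y_\sigma\times Z_\sigma$ only help). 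Hence $R\in\mathcal R^C(Y,Z)$ and $d_H^C(Y,Z)\leq M+\varepsilon$; letting $\varepsilon\to 0$ finishes $(\leq)$.

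\textbf{The main subtlety} to be careful about is the $C$-constrained condition for $R$ in the $(\leq)$ direction: one must verify that $R|_{Y_\sigma\times Z_\sigma}$ is genuinely a correspondence for every $\sigma\in C$, i.e., that every point of $Y_\sigma$ is $R$-related to some point of $Z_\sigma$ and vice versa. This is exactly guaranteed by having thrown in $R_\sigma$, whose edges lie in $Y_\sigma\times Z_\sigma$ with length controlled by $d_H(Y_\sigma,Z_\sigma)$ — and this is precisely the step that fails in the Gromov-Hausdorff analogue, where the different $\sigma$ cannot be handled simultaneously because the embeddings are not fixed. A minor point worth a sentence: if some $Y_\sigma$ or $Z_\sigma$ is empty the corresponding $d_H(Y_\sigma,Z_\sigma)$ should be read as $0$ when both are empty (and the statement is vacuous or the two sides are $+\infty$ together otherwise); also the suprema may be infinite, in which case the identity holds trivially with both sides $+\infty$. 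Everything else is a routine unwinding of definitions.
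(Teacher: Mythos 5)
Your proof is correct and follows essentially the same route as the paper's: the $(\geq)$ direction by restricting a $C$-constrained correspondence to each $Y_\sigma\times Z_\sigma$, and the $(\leq)$ direction by forming the union of a global correspondence with (near-optimal) correspondences $R_\sigma\subseteq Y_\sigma\times Z_\sigma$ and checking the union is $C$-constrained. The only cosmetic difference is that you carry a uniform $\varepsilon$-slack explicitly (and note the degenerate empty/infinite cases), whereas the paper passes to sequences of correspondences converging to the infima.
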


\begin{proof}
    By definition, if $R\in \mathcal R^C(Y,Z)$, then $R\cap (Y_\sigma \times Z_\sigma)$ is a correspondence between $Y_\sigma$ and $Z_\sigma$. This implies $d_H(Y_\sigma, Z_\sigma) \leq d_H^C(Y, Z)$, and then also $\sup_{\sigma\in C} d_H(Y_\sigma, Z_\sigma) \leq d_H^C(Y, Z)$. Clearly also $d_H(Y, Z)\leq d_H^C(Y,Z)$, as $\mathcal R^C(Y,Z)\subseteq \mathcal R(Y,Z)$.

    For the other direction, consider a correspondence $R_\sigma \in \mathcal{R}(Y_\sigma, Z_\sigma)$ for each $\sigma \in C$, and a correspondence $R'\in\mathcal R(Y,Z)$. Let $R=R'\cup\big(\bigcup_{\sigma\in C}R_\sigma\big)$.
    Then, $R\in \mathcal R^C(Y,Z)$, because $R_\sigma \subseteq R\cap(Y_\sigma \times Z_\sigma)$ for each $\sigma\in C$.  Clearly, \[
        \sup_{\sigma\in C}\sup_{(y, z) \in R_\sigma} d(y, z)
        = \sup_{(y, z)\in R} d(y, z)
        \geq d_H^C(Y, Z).
    \]
    Passing to sequences of correspondences converging to the infima from the definition of $d_H(Y_\sigma, Z_\sigma)$ on the left-hand side, we get the desired inequality $\sup_{\sigma\in C} d_H(Y_\sigma, Z_\sigma) \geq d_H^C(Y, Z)$.
\end{proof}

Let $\MP{1}$ and $\MP{2}$ be two chromatic metric pairs. For an admissible metric $d\in\mathcal D(A_1,A_2)$ (see Theorem \ref{thm:GH_via_embed} for the definition of an admissible metric), we can endow $(A_1\sqcup A_2,d)$ with a chromatic metric pair structure $(A', \chi'\colon X' \to \N)$ by setting $A'=(A_1\sqcup A_2,d)$, $X'=i_1(X_1)\sqcup i_2(X_2)$, where $i_k\colon A_k\to A_1\sqcup A_2$ for $k=1,2$ are the canonical inclusion, and $\chi'(i_k(x))=\chi_k(x)$ for $k=1,2$ and $x\in X_k$. With this additional structure on $A'$, the inclusions $i_k\colon \MP{k}\to (A',\chi')$ for $k=1,2$ are $C$-constrained isomorphic embeddings. We write $i_k(\MP{k})$ to denote the corresponding subspace of $(A',\chi')$ to differentiate it from the metric subspace $i_k(A_k)$ of $A'$.

\begin{theorem}\label{thm:C-GH_via_embeddings}
	Let $\N\in C\subseteq\mathcal P(\N)$ be a constraint set and $\Sigma_C\subseteq C'\subseteq\Tau_C$. Then, for every pair of chromatic metric pairs $\MP{1}$ and $\MP{2}$,
    \[
    \GH^C(\MP{1},\MP{2})=\GH^{C'}(\MP{1},\MP{2})=\inf_{d\in\mathcal D(A_1,A_2)}d_H^{C'}(i_1(\MP{1}),i_2(\MP{2})).
    \]
\end{theorem}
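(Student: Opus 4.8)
The plan is to mirror the proof of Theorem~\ref{thm:GH_via_embed}, carrying along the extra chromatic structure, and then invoke Proposition~\ref{prop:d_H_rewritten} and Theorem~\ref{thm:characterization_via_correspondences} to reconcile the two characterizations. The first equality $\GH^C = \GH^{C'}$ is immediate from Remark~\ref{rem:C-constrained_GH_and_other_GH}(v), since $\Tau_C = \Tau_{C'}$ under the hypothesis on $C'$; so everything reduces to showing that the infimum over admissible metrics equals $\frac12\inf_{R\in\mathcal R^{C'}}\dis R$, which by Theorem~\ref{thm:characterization_via_correspondences} is $\GH^{C'}(\MP{1},\MP{2})$. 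Throughout, note that an admissible metric $d\in\mathcal D(A_1,A_2)$ makes $(A_1\sqcup A_2, d)$ into a chromatic metric pair $(A',\chi')$ in which $i_1,i_2$ are $C$-constrained isomorphic embeddings, so $d_H^{C'}(i_1(\MP{1}),i_2(\MP{2}))$ is well-defined and, by Proposition~\ref{prop:d_H_rewritten}, equals $\max\{d_H(i_1(A_1),i_2(A_2)),\ \sup_{\sigma\in C'} d_H(i_1(\chi_1^{-1}(\sigma)), i_2(\chi_2^{-1}(\sigma)))\}$.

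For the inequality $(\le)$: given an admissible metric $d$, any $C'$-constrained correspondence $R\in\mathcal R^{C'}(i_1(\MP{1}),i_2(\MP{2}))$ between the two embedded copies pulls back to a $C'$-constrained correspondence $R'$ between $\MP{1}$ and $\MP{2}$, and $\dis R' \le 2\sup_{(z,z')\in R} d(z,z')$ exactly as in the classical case (the distortion of a relation inside an ambient metric is at most twice its ``diameter''). Taking infima gives $\GH^{C'}(\MP{1},\MP{2}) \le d_H^{C'}(i_1(\MP{1}),i_2(\MP{2}))$ for every admissible $d$, hence $(\le)$.

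For the inequality $(\ge)$: fix a $C'$-constrained correspondence $R$ between $\MP{1}$ and $\MP{2}$ with $\dis R = 2\varepsilon$. We want an admissible metric $d_R$ on $A_1\sqcup A_2$ that (i) restricts correctly to $A_1$ and $A_2$, and (ii) satisfies $d_H^{C'}(i_1(\MP{1}),i_2(\MP{2})) \le \varepsilon$. The natural candidate is the same gluing formula as in Theorem~\ref{thm:GH_via_embed}, namely $d_R(z,z') = \inf_{(x,y)\in R}(d_1(z,x) + \varepsilon + d_2(y,z'))$ for $z\in A_1$, $z'\in A_2$; the verification that this is a genuine admissible metric is identical to the classical argument and uses only $\dis R = 2\varepsilon$. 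The new point is the chromatic one: by Proposition~\ref{prop:d_H_rewritten}, $d_H^{C'}(i_1(\MP{1}),i_2(\MP{2})) \le \varepsilon$ provided that for every $\sigma\in C'$ and every $\sigma$-colored $x\in\chi_1^{-1}(\sigma)$ there is a $\sigma$-colored $y\in\chi_2^{-1}(\sigma)$ with $d_R(i_1(x),i_2(y))\le\varepsilon$, and symmetrically. But since $R$ is $\sigma$-constrained, such a $y$ exists with $(x,y)\in R$, and then the gluing formula gives $d_R(i_1(x),i_2(y)) \le d_1(x,x) + \varepsilon + d_2(y,y) = \varepsilon$; the symmetric statement is the same. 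Hence $d_R$ realizes $d_H^{C'}\le\varepsilon = \frac12\dis R$, and taking the infimum over $R$ yields $(\ge)$.

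The one genuinely delicate point is whether the gluing formula respects all constraints in $C'$ simultaneously rather than just individually---this is exactly the subtlety flagged in Remark~\ref{rem:why_intersection?}, and it is the reason the hypothesis $\Sigma_C\subseteq C'\subseteq\Tau_C$ is imposed. Concretely, one should check that the correspondence used in the pullback direction, or equivalently the one extracted from $d_R$ via nearest-neighbours, can be taken to be $\Sigma_C$-constrained (hence $C'$-constrained, by the union argument in \eqref{eq:C-constrained_iff_TC-constrained}): for a point $x$ with $\chi_1(x) = n$, one picks the partner $y$ inside $\chi_2^{-1}(\sigma_n)$, which is legitimate because $R$ is $\Sigma_C$-constrained, and then $y$ automatically lies in $\chi_2^{-1}(\sigma)$ for every $\sigma\in C'$ containing $n$, since $\sigma_n\subseteq\sigma$ by Fact~\ref{fact:C_n_and_sigma_n}(iii). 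This is precisely the mechanism that makes the proof of Theorem~\ref{thm:characterization_via_correspondences} work, and invoking that theorem lets us avoid redoing the map-versus-correspondence bookkeeping here. I expect this coherence-across-constraints step to be the main obstacle; everything else is a routine transcription of the classical embedding proof.
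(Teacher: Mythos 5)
Your proposal is correct and follows essentially the same route as the paper: the authors also dispatch the first equality via Remark~\ref{rem:C-constrained_GH_and_other_GH}(v) and then adapt the proof of Theorem~\ref{thm:GH_via_embed}, checking that the correspondences constructed there (pullbacks of $C'$-constrained correspondences in one direction, the gluing metric $d_R$ in the other) respect the constraints. Your write-up simply makes explicit the details---in particular the use of Proposition~\ref{prop:d_H_rewritten} and the role of $\Sigma_C\subseteq C'$---that the paper leaves as ``it is easy to see.''
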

\begin{proof}
    The first equality is immediate. As for the second one, the proof is an adaptation of that of Theorem~\ref{thm:GH_via_embed}. Indeed, it is easy to see that the correspondences constructed as in the proof of Theorem~\ref{thm:GH_via_embed} have the desired additional properties.
\end{proof}

\subsubsection{Comparison with other existing notions}\label{subsub:related_notions}

Let us specialize Theorem \ref{thm:C-GH_via_embeddings} in the setting of metric pairs. We have discussed in Remark \ref{rem:C-constrained_GH_and_other_GH}(iv) how the $C$-constrained Gromov-Hausdorff distance canonically induces a Gromov-Hausdorff distance between metric pairs.

\begin{corollary}\label{coro:GH_metric_pairs}
    Let $(A_1,X_1)$ and $(A_2,X_2)$ be two metric pairs. Then,
    \[
    \GH((A_1,X_1),(A_2,X_2))=\inf_{d\in\mathcal{D}(A_1,A_2)}\max\{d_H(i_1(A_1),i_2(A_2)),d_H(i_1(X_1),i_2(X_2))\}
    \]
\end{corollary}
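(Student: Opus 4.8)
The statement is the specialization of Theorem~\ref{thm:C-GH_via_embeddings} to the case of a metric pair, viewed as a chromatic metric pair via the constant-zero coloring (Remark~\ref{rem:from_metric_to_chromatic_metric_pairs}), with the trivial constraint set $C=\{\N\}$. So the plan is simply to unwind all the definitions and check that the right-hand side of Theorem~\ref{thm:C-GH_via_embeddings} collapses to the displayed expression. First I would recall from Remark~\ref{rem:C-constrained_GH_and_other_GH}(ii) that $\GH((A_1,X_1),(A_2,X_2))$ is \emph{by definition} $\GH^{\{\N\}}((A_1,0_{X_1}),(A_2,0_{X_2}))$, so the left-hand side is already a $C$-constrained Gromov--Hausdorff distance with $C=\{\N\}$.

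Next I would compute $\Sigma_C$ and $\Tau_C$ for $C=\{\N\}$: here $C_n=\{\N\}$ for every $n$, hence $\sigma_n=\N$ for all $n$, so $\Sigma_C=\{\N\}$ and $\Tau_C=\{\emptyset,\N\}$. Thus $C'=\Sigma_C=C=\{\N\}$ is the only choice satisfying $\Sigma_C\subseteq C'\subseteq\Tau_C$ apart from adding $\emptyset$, and either way the constrained distance is the same by Remark~\ref{rem:C-constrained_GH_and_other_GH}(v). Applying Theorem~\ref{thm:C-GH_via_embeddings} with this $C$ gives
\[
\GH^{\{\N\}}(\MP{1},\MP{2})=\inf_{d\in\mathcal D(A_1,A_2)}d_H^{\{\N\}}\bigl(i_1(\MP{1}),i_2(\MP{2})\bigr).
\]
Now I would invoke Proposition~\ref{prop:d_H_rewritten} to rewrite the $\{\N\}$-constrained Hausdorff distance inside the disjoint union $A'=(A_1\sqcup A_2,d)$: with the chromatic structure $X'=i_1(X_1)\sqcup i_2(X_2)$ and constant coloring, the only nontrivial $\sigma\in\{\N\}$ yields $Y_\N=i_1(X_1)$ and $Z_\N=i_2(X_2)$, so
\[
d_H^{\{\N\}}\bigl(i_1(\MP{1}),i_2(\MP{2})\bigr)=\max\{d_H(i_1(A_1),i_2(A_2)),\,d_H(i_1(X_1),i_2(X_2))\}.
\]
Substituting this into the infimum over $d\in\mathcal D(A_1,A_2)$ yields exactly the claimed formula.

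The only point needing a word of care — and the nearest thing to an obstacle, though it is routine — is matching the normalizations: Theorem~\ref{thm:GH_via_embed} and Theorem~\ref{thm:C-GH_via_embeddings} already fold the factor $1/2$ into the embedding picture (the distortion is twice the Hausdorff distance of the embedded copies), so there is no stray constant to reconcile. I would also note that $\mathcal R^{\{\N\}}(Y,Z)$ in the definition of $d_H^{\{\N\}}$ consists of those correspondences whose restriction to $X'\cap i_1(A_1)\times X'\cap i_2(A_2)$ is again a correspondence, which is precisely the content used in Proposition~\ref{prop:d_H_rewritten}; since $X_1\subseteq A_1$ and $X_2\subseteq A_2$ are the only colored sets, no further compatibility conditions arise. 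This completes the proof.
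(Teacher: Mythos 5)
Your proposal is correct and follows exactly the paper's own (one-line) proof: specialize Theorem~\ref{thm:C-GH_via_embeddings} to $C=\{\N\}$ via the constant-zero coloring and then apply Proposition~\ref{prop:d_H_rewritten}. Your explicit computation of $\Sigma_C=\{\N\}$ and $\Tau_C=\{\emptyset,\N\}$ and the identification $Y_\N=i_1(X_1)$, $Z_\N=i_2(X_2)$ just fills in details the paper leaves implicit.
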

\begin{proof}
    It follows from Theorem \ref{thm:C-GH_via_embeddings} and Proposition \ref{prop:d_H_rewritten}.
\end{proof}

The above result implies that our Gromov-Hausdorff distance between metric pairs coincides with that defined in \cite{TorGon}. In \cite{GomChe}, the authors provide a different, but related Gromov-Hausdorff distance notion between metric pairs. Let $(A_1,X_1)$ and $(A_2,X_2)$ be two metric pairs. The authors define their Gromov-Hausdorff distance as the value
\[
    \widetilde{\GH}((A_1,X_1),(A_2,X_2))=\inf_{d\in\mathcal D(A_1,A_2)
    }d_H(i_1(A_1),i_2(A_2))+d_H(i_1(X_1),i_2(X_2).
\]
The following result, which is an immediate application of Corollary \ref{coro:GH_metric_pairs}, compares it with our notion.

\begin{corollary}\label{coro:GH_metric_pairs}
	For every two metric pairs $(A_1,X_1)$ and $(A_2,X_2)$,
	\begin{equation*}
		\GH((A_1,X_1),(A_2,X_2))\leq\widetilde{\GH}((A_1,X_1),(A_2,X_2))\leq 2\GH((A_1,X_1),(A_2,X_2)).
	\end{equation*}
\end{corollary}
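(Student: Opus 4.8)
The plan is to reduce both sides to infima over the \emph{same} family of admissible metrics and then invoke an elementary numerical inequality. Combining the closed form for the Gromov--Hausdorff distance on metric pairs established above (Theorem~\ref{thm:C-GH_via_embeddings} with $C=C'=\{\N\}$, together with Proposition~\ref{prop:d_H_rewritten}) one has
\[
\GH((A_1,X_1),(A_2,X_2))=\inf_{d\in\mathcal D(A_1,A_2)}\max\{d_H(i_1(A_1),i_2(A_2)),\,d_H(i_1(X_1),i_2(X_2))\}.
\]
Thus, writing $a(d)=d_H(i_1(A_1),i_2(A_2))$ and $b(d)=d_H(i_1(X_1),i_2(X_2))$ for a fixed $d\in\mathcal D(A_1,A_2)$, the quantity $\GH((A_1,X_1),(A_2,X_2))$ is the infimum over $d$ of $\max\{a(d),b(d)\}$, whereas $\widetilde{\GH}((A_1,X_1),(A_2,X_2))$ is, by its very definition, the infimum over the same index set of $a(d)+b(d)$.

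Next I would apply the elementary bound $\max\{a,b\}\le a+b\le 2\max\{a,b\}$, valid for all $a,b\ge 0$, pointwise in $d$. Taking the infimum over $d\in\mathcal D(A_1,A_2)$ of each of the three terms and using that the infimum is monotone, the left-hand inequality yields $\GH((A_1,X_1),(A_2,X_2))\le\widetilde{\GH}((A_1,X_1),(A_2,X_2))$, and the right-hand one yields $\widetilde{\GH}((A_1,X_1),(A_2,X_2))\le 2\inf_{d}\max\{a(d),b(d)\}=2\GH((A_1,X_1),(A_2,X_2))$. This is exactly the asserted chain.

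There is no genuine obstacle here; the only point worth stating carefully is that $\GH$ and $\widetilde{\GH}$ are optimized over the \emph{identical} family $\mathcal D(A_1,A_2)$ of admissible metrics on $A_1\sqcup A_2$, so that the pointwise comparison of the two objective functions survives the passage to the infimum, with no interchange of infima and suprema and no compactness hypothesis required. Equivalently, one may argue directly from the definitions: an admissible metric witnessing $\GH((A_1,X_1),(A_2,X_2))$ up to $\varepsilon$ witnesses $\widetilde{\GH}((A_1,X_1),(A_2,X_2))$ up to $2\varepsilon$, and one witnessing $\widetilde{\GH}((A_1,X_1),(A_2,X_2))$ up to $\varepsilon$ witnesses $\GH((A_1,X_1),(A_2,X_2))$ up to $\varepsilon$; letting $\varepsilon\to 0$ gives both inequalities.
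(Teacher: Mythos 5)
Your proof is correct and matches the paper's intended argument: the paper derives this as an immediate consequence of the preceding corollary expressing $\GH((A_1,X_1),(A_2,X_2))$ as $\inf_{d\in\mathcal D(A_1,A_2)}\max\{d_H(i_1(A_1),i_2(A_2)),d_H(i_1(X_1),i_2(X_2))\}$, combined with the elementary bound $\max\{a,b\}\leq a+b\leq 2\max\{a,b\}$ applied pointwise over the same family of admissible metrics. Nothing further is needed.
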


Since the two metrics $\GH$ and $\widetilde{\GH}$ are equivalent (i.e., the identity map is a bi-Lipschitz equivalence), the topological statements obtained in \cite{GomChe} for $\widetilde{\GH}$ can be also applied to the distance presented in this work.

\subsection{Characterization of spaces at zero $C$-constrained Gromov-Hausdorff distance}\label{sub:inf_is_min}

We want to prove the existence of the distortion-minimizing correspondence. From such result, a characterization of those spaces that have $C$-constrained Gromov-Hausdorff distance $0$ can be immediately provided. First, we need a couple of technical lemmas.

\begin{lemma}\label{lemma:closure_same_distortion}
    Let $R$ be a relation between the two metric spaces $X$ and $Y$. Then, $\dis\overline R=\dis R$ where $\overline R$ is the closure of $R$ in the product space $X\times Y$ endowed with the product topology.
\end{lemma}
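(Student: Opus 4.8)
The plan is to show the two inequalities $\dis\overline R\geq\dis R$ and $\dis\overline R\leq\dis R$ separately. The first is immediate: since $R\subseteq\overline R$, the supremum defining $\dis\overline R$ is taken over a larger set than the one defining $\dis R$, so $\dis R\leq\dis\overline R$. All the work is in the reverse inequality, which says that passing to the closure does not create new, larger discrepancies $\lvert d_X(x,x')-d_Y(y,y')\rvert$.

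For the reverse inequality, I would fix an arbitrary pair of points $(x,y),(x',y')\in\overline R$ and show that $\lvert d_X(x,x')-d_Y(y,y')\rvert\leq\dis R$. By definition of the closure in the product topology, there are sequences $(x_n,y_n)\to(x,y)$ and $(x_n',y_n')\to(x',y')$ with $(x_n,y_n),(x_n',y_n')\in R$ for all $n$; equivalently $x_n\to x$, $y_n\to y$, $x_n'\to x'$, $y_n'\to y'$ in the respective metric spaces. For each $n$ we have the bound $\lvert d_X(x_n,x_n')-d_Y(y_n,y_n')\rvert\leq\dis R$ since $(x_n,y_n),(x_n',y_n')\in R$. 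Then I would pass to the limit: continuity of the metric $d_X$ gives $d_X(x_n,x_n')\to d_X(x,x')$ and likewise $d_Y(y_n,y_n')\to d_Y(y,y')$, hence $\lvert d_X(x_n,x_n')-d_Y(y_n,y_n')\rvert\to\lvert d_X(x,x')-d_Y(y,y')\rvert$. Since each term of a convergent sequence of reals bounded by $\dis R$ has limit bounded by $\dis R$, we conclude $\lvert d_X(x,x')-d_Y(y,y')\rvert\leq\dis R$. Taking the supremum over all such pairs yields $\dis\overline R\leq\dis R$.

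There is essentially no obstacle here; the only point requiring a little care is the case $\dis R=+\infty$, which makes the reverse inequality vacuous, so one may harmlessly assume $\dis R<\infty$. The argument uses nothing beyond the joint continuity of a metric $d\colon Z\times Z\to\R$ with respect to the product topology (itself a consequence of the triangle inequality), and the elementary fact that limits preserve non-strict inequalities. I would write this up in just a few lines.

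\begin{proof}
    The inclusion $R\subseteq\overline R$ immediately gives $\dis R\leq\dis\overline R$. For the converse, we may assume $\dis R<\infty$. Let $(x,y),(x',y')\in\overline R$. Choose sequences $(x_n,y_n)_{n\in\N}$ and $(x_n',y_n')_{n\in\N}$ in $R$ with $(x_n,y_n)\to(x,y)$ and $(x_n',y_n')\to(x',y')$ in $X\times Y$, so that $x_n\to x$, $x_n'\to x'$ in $X$ and $y_n\to y$, $y_n'\to y'$ in $Y$. For every $n$, since $(x_n,y_n),(x_n',y_n')\in R$,
    \[
        \lvert d_X(x_n,x_n')-d_Y(y_n,y_n')\rvert\leq\dis R.
    \]
    Letting $n\to\infty$ and using the continuity of $d_X$ and $d_Y$, the left-hand side converges to $\lvert d_X(x,x')-d_Y(y,y')\rvert$, so this quantity is also at most $\dis R$. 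Taking the supremum over all $(x,y),(x',y')\in\overline R$ yields $\dis\overline R\leq\dis R$.
\end{proof}
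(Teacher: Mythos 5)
Your proof is correct and follows essentially the same route as the paper's: both establish the trivial inequality from $R\subseteq\overline R$ and then approximate points of $\overline R$ by sequences in $R$, passing to the limit. The only cosmetic difference is that the paper spells out the triangle-inequality estimates with an explicit $\varepsilon$ where you invoke continuity of the metrics directly.
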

\begin{proof}
    Clearly, $\dis R\leq\dis\overline R$. Let $(\overline x_i,\overline y_i)\in\overline R$ for $i=1,2$, and let $(x_i^n,y_i^n)$ be two sequences in $R$ converging to $(\overline x_i,\overline y_i)$. For every $\varepsilon>0$, $d_X(x_i^n,\overline x_i)<\varepsilon$ and $d_Y(y_i^n,\overline y_n)<\varepsilon$ eventually. Therefore, in the chain of inequalities
    \[
        \begin{aligned}
        d_X(\overline x_1,\overline x_2)-d_Y(\overline y_1,\overline y_2)&\,\leq d_X(\overline x_1,x_1^n)+d_X(x_1^n,x_2^n)+d_X(x_2^n,\overline x_2)-\\
        &\hspace{3cm}-d_Y(y_1^n,y_2^n)+d_Y(\overline y_1,y_1^n)+d_Y(y_2^n,\overline y_2)\leq\\
        &\,\leq \dis R+d_X(\overline x_1,x_1^n)+d_X(x_2^n,\overline x_2)+d_Y(\overline y_1,y_1^n)+d_Y(y_2^n,\overline y_2),
        \end{aligned}
    \]
    the latter term is eventually bounded by $\dis R+4\varepsilon$. Since $\varepsilon$ can be arbitrarily taken, $\dis\overline R\leq\dis R$.
\end{proof}

A metric pair $(A,X)$ is {\em compact} if $A$ is compact and $X$ is closed in $A$.

\begin{lemma}\label{lemma:pair_correspondences_converge_to_pair_correspondences}
    Let $(A_1,X_1)$ and $(A_2,X_2)$ be two compact metric pairs. Let $\{R_n\}_n$ be a sequence in $\mathcal R(A_1,A_2)$ such that 
    \begin{compactenum}[(i)]
        \item $R_n|_{X_1\times X_2}$ are correspondences,
        \item $R_n$ are closed in $A_1\times A_2$ endowed with the supremum metric, and 
        \item $R_n\xrightarrow{d_H}R$ where $R$ is a closed correspondence.
    \end{compactenum}
    Then, $R|_{X_1\times X_2}$ is a correspondence.
\end{lemma}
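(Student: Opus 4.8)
The plan is to verify directly that $R|_{X_1\times X_2}$ satisfies the two defining conditions of a correspondence, via a compactness argument. Since $(A_2,X_2)$ is a compact metric pair, $X_2$ is a closed subset of the compact space $A_2$ and hence compact, and likewise for $X_1$. By the symmetry of the hypotheses in the two pairs, it suffices to show that every point of $X_1$ is related by $R$ to some point of $X_2$; the dual statement follows by the same argument with the indices swapped.

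First I would fix $x\in X_1$. For each $n$, hypothesis (i) says $R_n|_{X_1\times X_2}$ is a correspondence, so I can pick $y_n\in X_2$ with $(x,y_n)\in R_n$. By compactness of $X_2$, after passing to a subsequence I may assume $y_n\to y$ for some $y\in X_2$; thus $(x,y_n)\to(x,y)$ in $A_1\times A_2$ with the supremum metric. Now I invoke hypothesis (iii): since $R_n\xrightarrow{d_H}R$, for each $n$ (along the subsequence) there is a point $(a_n,b_n)\in R$ with distance to $(x,y_n)$ at most $d_H(R_n,R)+\tfrac1n$; as $d_H(R_n,R)\to 0$ and $(x,y_n)\to(x,y)$, this forces $a_n\to x$ and $b_n\to y$. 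Because $R$ is closed (also hypothesis (iii)), the limit $(x,y)$ lies in $R$. Since $x\in X_1$ and $y\in X_2$, this is exactly the statement that $x$ is related in $R|_{X_1\times X_2}$, which completes the argument.

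This lemma is essentially routine, and I do not expect a genuine obstacle; the only point requiring a little care is to keep the two limiting processes — extracting a convergent subsequence of $(y_n)$ in $X_2$ and then approximating $(x,y_n)$ by points of $R$ using the Hausdorff convergence — properly ordered, so that no real diagonalization is needed (the two choices are independent once the subsequence is fixed). Hypotheses (ii) and the closedness of $R$ enter precisely to guarantee that the $d_H$-limit captures actual points of $R$ and that the pointwise limit $(x,y)$ belongs to $R$.
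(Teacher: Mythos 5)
Your proof is correct and follows essentially the same route as the paper: fix $x\in X_1$, use hypothesis (i) to pick $y_n\in X_2$ with $(x,y_n)\in R_n$, extract a convergent subsequence $y_n\to y\in X_2$ by compactness, and conclude $(x,y)\in R$ by symmetry-completed argument. The only cosmetic difference is that the paper cites a characterization of Hausdorff limits from Tuzhilin's notes to conclude $(x,y)\in R$, while you verify the needed half of that fact directly by approximating $(x,y_n)$ with points of $R$ and using closedness of $R$ — which is a perfectly sound, self-contained substitute.
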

\begin{proof}
    Let $x\in X_1$. For every $n\in\N$, there is $y_n\in X_2$ such that $(x,y_n)\in R_n$. Since $X_2$ is compact, $\{y_n\}_n$ (or possibly a subsequence thereof) converges to a point $y\in X_2$. In particular, $(x,y_n)\to(x,y)$, We claim that $(x,y)\in R$. Indeed, according to \cite[Propositions 5.18, 5.20]{Tuz}, 
    \[
    R=\overline R=\{z\in A_1\times A_2\mid \exists\{z_n\}_n,\, z_n\in R_n:\, z_n\to z\}
    \]
    and so $(x,y)\in R$. Similarly, we can show that, for every point $y\in X_2$, there exists $x\in X_1$ such that $(x,y)\in R$.
\end{proof}

Let $\N\in C\subseteq\mathcal P(\N)$. We say that a chromatic metric pair $\MP{}$ is {\em $C$-compact} if $A$ is compact, and every $\chi^{-1}(\sigma)\subseteq A$, for $\sigma\in C$, is closed. 

\begin{remark}\label{rem:C_and_Sigma_C_compact}
Let $\N\in C\subseteq\mathcal P(\N)$. A $C$-compact chromatic metric pair $\MP{}$ is automatically $(\Sigma_C\cup\{\N\})$-compact. Indeed, for every $n\in\N$, $\chi^{-1}(\sigma_n)=\bigcap_{\sigma\in C_n}\chi^{-1}(\sigma)$, which is closed provided that so is each $\chi^{-1}(\sigma)$.

The converse implication does not hold in general. Consider the discrete constraint set $C_D$. Then, $\sigma_n=\{n\}$ for every $n\in\N$. Define the chromatic metric pair $([0,1],\chi)$ where
\[
\chi\colon\begin{cases}
    \begin{aligned}
        0\,&\mapsto 0 &\\
        \frac{1}{n}\,&\mapsto n &\text{for every $n\in\N\setminus\{0\}$.}
    \end{aligned}
\end{cases}
\]
Then, $\chi^{-1}(\N)$ is closed since it is the union of a convergent sequence and its limit point, and $\chi^{-1}(\sigma_n)$ is a singleton for every $n\in\N$. Hence, $([0,1], \chi)$ is $(\Sigma_{C_D}\cup\{\N\})$-compact. However, $\chi^{-1}(\N\setminus\{0\})=\{1/n\mid n\in\N\setminus\{0\}\}$ is not closed, which shows that $([0,1],\chi)$ is not $C$-compact.
\end{remark}

\begin{proposition}\label{prop:correspondence_minimizing_distortion_metric_pairs}
    Let $\N\in C\subseteq\mathcal P(\N)$ be a constraint set containing $\Sigma_C$
    , and $\MP{1}$ and $\MP{2}$ be two $C$-compact chromatic metric pairs. Suppose that $\GH^C(\MP{1},\MP{2})<\infty$. Then, there is $R\in \mathcal R^C(\MP{1},\MP{2})$ such that $\GH^C(\MP{1},\MP{2})=\frac{1}{2}\dis R$. 
\end{proposition}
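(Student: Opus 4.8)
The plan is to mimic the standard compactness argument for the existence of a distortion-minimizing correspondence (Proposition~\ref{prop:distortion_minimizing_correspondence}), but carried out in the $C$-constrained setting, using the technical lemmas just established. First I would use Theorem~\ref{thm:characterization_via_correspondences} (with $C' = C$, which is legitimate precisely because the hypothesis assumes $\Sigma_C \subseteq C$) to rewrite
\[
    \GH^C(\MP{1},\MP{2}) = \frac{1}{2}\inf_{R\in\mathcal R^C(\MP{1},\MP{2})}\dis R,
\]
so that the task becomes showing this infimum is attained. Since $\GH^C(\MP{1},\MP{2})<\infty$, pick a sequence $\{R_n\}_n$ in $\mathcal R^C(\MP{1},\MP{2})$ with $\dis R_n \to 2\GH^C(\MP{1},\MP{2})$. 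By Lemma~\ref{lemma:closure_same_distortion} we may replace each $R_n$ by its closure $\overline{R_n}$ without changing the distortion; and the closure of a $C$-constrained correspondence is still a $C$-constrained correspondence (the closure still contains $R_n$, hence projects onto both factors, and for each $\sigma\in C$ the restriction $\overline{R_n}|_{\chi_1^{-1}(\sigma)\times\chi_2^{-1}(\sigma)}$ contains $R_n|_{\chi_1^{-1}(\sigma)\times\chi_2^{-1}(\sigma)}$, which is already a correspondence of the closed subspaces $\chi_1^{-1}(\sigma),\chi_2^{-1}(\sigma)$). So we may assume each $R_n$ is closed.

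Next I would pass to a convergent subsequence. Since $A_1$ and $A_2$ are compact, $A_1\times A_2$ is compact, and the hyperspace of its closed subsets with the Hausdorff metric is compact; so a subsequence of $\{R_n\}_n$ converges in Hausdorff distance to a closed set $R \subseteq A_1\times A_2$. By the characterization of Hausdorff limits as limits of sequences of points (cited via \cite{Tuz} in the proof of Lemma~\ref{lemma:pair_correspondences_converge_to_pair_correspondences}), $R$ is itself a correspondence between $A_1$ and $A_2$, and a standard estimate gives $\dis R \le \liminf \dis R_n = 2\GH^C(\MP{1},\MP{2})$ (in fact equality, since $R\in\mathcal R^C$ gives the reverse inequality). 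It remains to check $R \in \mathcal R^C(\MP{1},\MP{2})$, i.e.\ that $R$ is $\sigma$-constrained for every $\sigma\in C$. Here is where Lemma~\ref{lemma:pair_correspondences_converge_to_pair_correspondences} enters: for each fixed $\sigma\in C$, the pairs $(A_1,\chi_1^{-1}(\sigma))$ and $(A_2,\chi_2^{-1}(\sigma))$ are compact metric pairs (by $C$-compactness), the sequence $\{R_n\}_n$ satisfies hypotheses (i)--(iii) of that lemma relative to these subspaces, and hence $R|_{\chi_1^{-1}(\sigma)\times\chi_2^{-1}(\sigma)}$ is a correspondence. Doing this for every $\sigma\in C$ shows $R$ is $C$-constrained, completing the proof.

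The main obstacle, and the only place that needs real care, is verifying that $R|_{\chi_1^{-1}(\sigma)\times\chi_2^{-1}(\sigma)}$ is a genuine correspondence for each $\sigma$ — i.e.\ that the $C$-constraint survives the Hausdorff limit. This is exactly what Lemma~\ref{lemma:pair_correspondences_converge_to_pair_correspondences} was designed to handle, and the argument there is the $\varepsilon$-chasing via compactness of each $\chi_i^{-1}(\sigma)$: given $x\in\chi_1^{-1}(\sigma)$, produce $y_n\in\chi_2^{-1}(\sigma)$ with $(x,y_n)\in R_n$, extract a convergent subsequence $y_n\to y\in\chi_2^{-1}(\sigma)$ (using that $\chi_2^{-1}(\sigma)$ is closed in the compact $A_2$), and conclude $(x,y)\in R$ via the Hausdorff-limit characterization. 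I would just invoke the lemma once per $\sigma$ rather than reproving it. A minor point worth flagging: the hypothesis must include $\Sigma_C\subseteq C$ so that Theorem~\ref{thm:characterization_via_correspondences} applies with $C'=C$ and so that $C$-compactness directly provides closedness of the relevant preimages; this is precisely the hypothesis as stated.
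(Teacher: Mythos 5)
Your proposal is correct and follows essentially the same route as the paper: reduce to the correspondence characterization via Theorem~\ref{thm:characterization_via_correspondences}, take a minimizing sequence, replace each term by its closure using Lemma~\ref{lemma:closure_same_distortion}, extract a Hausdorff-convergent subsequence by Blaschke compactness, and apply Lemma~\ref{lemma:pair_correspondences_converge_to_pair_correspondences} once per $\sigma\in C$ to see that the limit stays $C$-constrained. Your explicit check that the closure of a $C$-constrained correspondence remains $C$-constrained is a small detail the paper leaves implicit, but otherwise the arguments coincide.
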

\begin{proof}
    According to the hypotheses, we can characterize the $C$-constrained Gromov-Hausdorff distance using $C$-constrained correspondences (Theorem \ref{thm:characterization_via_correspondences}). Let $R_n\in\mathcal R^C(\MP{1},\MP{2})$ be a sequence of $C$-constrained correspondences such that 
    \[
        2\GH^C(\MP{1},\MP{2})\leq\dis R_n\leq2\GH^C(\MP{1},\MP{2})+1/n. 
    \]
    According to Lemma \ref{lemma:closure_same_distortion}, we can assume that each $R_n$ is closed. Note that $A_1\times A_2$ is compact, and the space of all its compact subsets endowed with the Hausdorff distance is also compact by (Blaschke's Theorem, see \cite{BurBurIva}). According to \cite{ChoMem_geo}, the limit $R$ of (a subsequence of) $\{R_n\}_n$ is a correspondence between $A_1$ and $A_2$ and $\dis R=\lim\dis R_n=2\GH^C(\MP{1},\MP{2})$. Consider an arbitrary $\sigma\in C$. Both $\chi_1^{-1}(\sigma)$ and $\chi_2^{-1}(\sigma)$ are compact by assumption. By Lemma \ref{lemma:pair_correspondences_converge_to_pair_correspondences}, $R|_{\chi_1^{-1}(\sigma)\times\chi_2^{-1}(\sigma)}$ is a correspondence and so $R$ is $\sigma$-constrained.
\end{proof}

As a corollary, we obtain a characterization of those chromatic metric pairs at distance~$0$.

\begin{corollary}\label{coro:0_distance_chromatic}
    Let $\N\in C\subseteq\mathcal P(\N)$ be a constraint set containing $\Sigma_C$. Suppose that $\MP{1}$ and $\MP{2}$ are two $(\Sigma_C\cup\{\N\})$-compact chromatic metric pairs. Then, the following properties are equivalent:
    \begin{compactenum}[(i)]
        \item $\GH^C(\MP{1},\MP{2})=0$;
        \item $\GH^{\Sigma_C\cup\{\N\}}(\MP{1},\MP{2})=0$;
        \item $\MP{1}$ and $\MP{2}$ are $(\Sigma_C\cup\{\N\})$-constrainedly isomorphic;
        \item $\MP{1}$ and $\MP{2}$ are $C$-constrainedly isomorphic.
    \end{compactenum}
\end{corollary}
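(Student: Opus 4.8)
The plan is to prove the cycle of implications (ii)$\Rightarrow$(iii)$\Rightarrow$(iv)$\Rightarrow$(i)$\Rightarrow$(ii), using Proposition~\ref{prop:correspondence_minimizing_distortion_metric_pairs} as the main engine for the final step. Here $C' := \Sigma_C \cup \{\N\}$ is itself a constraint set containing $\Sigma_{C'} = \Sigma_C$ (since $\sigma_n$ is the same whether computed from $C$ or from $\Sigma_C \cup \{\N\}$, and $\N$ is already open), so both $C$ and $C'$ generate the same Alexandrov topology $\Tau_C = \Tau_{C'}$; consequently $\GH^C = \GH^{C'}$ by Remark~\ref{rem:C-constrained_GH_and_other_GH}(v), and likewise $C$-constrained isomorphism is the same as $C'$-constrained isomorphism, which immediately collapses (i)$\Leftrightarrow$(ii) and (iii)$\Leftrightarrow$(iv) into one observation. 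So the real content is (i)$\Leftrightarrow$(iii) for the constraint set $C'$.

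For (iii)$\Rightarrow$(i): if $\varphi\colon \MP{1}\to\MP{2}$ is a $C'$-constrained isomorphism, then $\varphi$ and $\varphi^{-1}$ are $C'$-constrained (hence $C$-constrained, since $C \preceq C'$ has the same strength, or directly because $C \subseteq \Tau_C = \Tau_{C'}$), and $\dis\varphi = \dis(\varphi^{-1}) = \codis(\varphi,\varphi^{-1}) = 0$ because $\varphi$ is an isometry with $\varphi^{-1}$ its inverse; plugging this pair into Definition~\ref{def:C-constrained_GH} gives $\GH^C(\MP{1},\MP{2}) = 0$. For (i)$\Rightarrow$(iii): assuming $\GH^C(\MP{1},\MP{2}) = 0 < \infty$ and noting that a $(\Sigma_C\cup\{\N\})$-compact pair is in particular $C'$-compact with $C'$ containing $\Sigma_{C'}$, Proposition~\ref{prop:correspondence_minimizing_distortion_metric_pairs} applied with the constraint set $C'$ yields a $C'$-constrained correspondence $R\in\mathcal R^{C'}(\MP{1},\MP{2})$ with $\dis R = 0$. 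One then argues, exactly as in the proof of Corollary~\ref{coro:0-GH}, that a correspondence with zero distortion is the graph of a surjective isometry: pick any $(x,y),(x,y')\in R$; then $0 = |d_1(x,x) - d_2(y,y')| = d_2(y,y')$, so $y = y'$, giving a well-defined map $\varphi$ with $\gr\varphi \subseteq R$; symmetrically $R^{-1}$ is a graph, forcing $R = \gr\varphi$ with $\varphi$ bijective, and $\dis R = 0$ makes $\varphi$ an isometry. Finally, $C'$-constrainedness of $R$ (i.e., $R|_{\chi_1^{-1}(\sigma)\times\chi_2^{-1}(\sigma)}$ is a correspondence for each $\sigma\in C'$) translates into $\varphi(\chi_1^{-1}(\sigma)) = \chi_2^{-1}(\sigma)$ for every $\sigma\in C'$: the inclusion $\subseteq$ is $\sigma$-constrainedness of $\varphi$, and $\supseteq$ comes from the same property for $R^{-1} = \gr(\varphi^{-1})$. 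Hence $\varphi$ is a $C'$-constrained isomorphism, which is the same as a $C$-constrained isomorphism, establishing (iii) and (iv).

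The only step requiring genuine care is the appeal to Proposition~\ref{prop:correspondence_minimizing_distortion_metric_pairs}, whose hypotheses demand $C$-compactness (all $\chi^{-1}(\sigma)$, $\sigma\in C$, closed) while the corollary only assumes $(\Sigma_C\cup\{\N\})$-compactness; as Remark~\ref{rem:C_and_Sigma_C_compact} shows, these differ. The resolution—and the reason the corollary is phrased with $C'$ throughout—is that we never invoke the proposition for $C$ itself but for $C' = \Sigma_C \cup \{\N\}$, for which the hypotheses $\Sigma_{C'} \subseteq C'$ and $C'$-compactness are exactly what is assumed. Everything else is the routine translation between zero-distortion correspondences and constrained isometries already carried out in the non-chromatic setting.
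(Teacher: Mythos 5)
Your proposal is correct and follows essentially the same route as the paper: collapse (i)$\Leftrightarrow$(ii) and (iii)$\Leftrightarrow$(iv) via the equal strength of $C$ and $\Sigma_C\cup\{\N\}$ (Proposition~\ref{prop:C-constrained_iff_TC-constrained}), get the easy direction from the zero-distortion isometry pair, and obtain the hard direction from the distortion-minimizing $C'$-constrained correspondence of Proposition~\ref{prop:correspondence_minimizing_distortion_metric_pairs}, correctly applied to $C'=\Sigma_C\cup\{\N\}$ so that the $(\Sigma_C\cup\{\N\})$-compactness hypothesis suffices. You merely spell out details the paper leaves implicit (that $\Sigma_{C'}=\Sigma_C$, and that a zero-distortion constrained correspondence is the graph of a constrained isomorphism, as in Corollary~\ref{coro:0-GH}).
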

\begin{proof}
    The equivalence (i)$\leftrightarrow$(ii) is shown in Remark \ref{rem:C-constrained_GH_and_other_GH}(v). The implication (ii)$\to$(iii) follows from Proposition \ref{prop:correspondence_minimizing_distortion_metric_pairs}, and (iv)$\to$(i) is straightforward. Finally, let us consider (iii)$\to$(iv). If $f$ is a $(\Sigma_C\cup\{\N\})$-constrained isomorphism, then the chain of inclusions \eqref{eq:C-constrained_iff_TC-constrained} used in Proposition \ref{prop:C-constrained_iff_TC-constrained} becomes a chain of equalities, and so $f$ is a $C$-constrained isomorphism as well according to Fact \ref{fact:C_n_and_sigma_n}(iv). 
\end{proof}

\begin{remark}\label{rem:isometry_of_finite_point_clouds}
    (i) Corollary \ref{coro:0_distance_chromatic} can be specialized to the setting of metric pairs using the definition provided in Remark \ref{rem:C-constrained_GH_and_other_GH}(ii). It reads that, for every pair of compact metric pairs $(A_1,X_1)$ and $(A_2,X_2)$, there is a pair isometry between them if and only if $\GH((A_1,X_1),(A_2,X_2))=0$.
    
    (ii) Compactness in the statement of Corollary \ref{coro:0_distance_chromatic} is only a sufficient condition to retrieve the bi-implication. Let us consider the following important class of examples. Let $(\R^n,\chi\colon X\to\N)$ and $(\R^n,\psi\colon Y\to\N)$ be two chromatic metric pairs such that $X$ and $Y$ are finite. Assume that $\GH^C((\R^n,\chi),(\R^n,\psi))=0$ where $\N\in C\subseteq\mathcal P(\N)$ is a constraint set closed under intersection. In particular, it is easy to see that $\GH^C(\chi,\psi)=0$. Since $X$ and $Y$ are finite, there is a $C$-constrained isomorphism $f\colon (X,\chi)\to (Y,\psi)$. Since $X$ and $Y$ are finite, the isometry $f$ can be extended to an isometry $\overline f\colon\R^n\to\R^n$ such that $\overline f|_X=f$. Since $f$ is a $C$-constrained isomorphism, so is $\overline f$.

\end{remark}

\section{Stability of the six-packs of persistent diagrams}\label{sec:stability}

We combine the notion of the above defined $C$-constrained Gromov-Hausdorff distance with techniques from \cite{ChoMem3} to strengthen a stability result in persistent homology from \cite{Coh-SteEdeHar, Coh-SteEdeHarMor}, relevant for the study of chromatic point sets using six-packs of persistent diagrams as described in \cite{CulDraEdeSag1}. We recall basic definitions from persistent homology in \S\ref{sub:ph-background}, formulate and prove the stability result in \S\ref{sub:stability}, and finally compare the result to a related stability result from \cite{Mem_tripods} in \S\ref{sub:Cech_stability}.

\subsection{Background on persistent homology}\label{sub:ph-background}

We recall relevant definitions for persistent homology; for a more complete treatment, see, e.g., \cite{EdeHar}. Succinctly, its main idea is to study a filtered topological space by applying a homology functor with field coefficients to the sequence of subspaces with inclusions, and decomposing thus created module into a unique direct sum of indecomposables. One typical example is the study of finite point sets, in particular in a Euclidean space. Consider the sequence of unions of balls centered at the points with radius growing from zero to infinity. Applying homology to this sequence, we study connected components, loops, voids and higher-dimensional analogues of such features in the unions of balls. Each summand in the algebraic decomposition mentioned above gives us an interval of radii within which a certain such feature persists. The collection of all those intervals is a popular invariant in topological data analysis.

For the rest of the section, fix a field $k$. A {\em persistence module}, $\mathcal U=\{U_\delta\xrightarrow{h_{\delta,\nu}}U_\nu\}_{\delta\leq\nu}$, is a family of vector spaces over the field $k$ indexed by $\R$ together with linear maps between them such that: $h_{\delta,\delta}$ is the identity for every $\delta\in\R$, and $h_{\nu,\eta}\circ h_{\delta,\nu}=h_{\delta,\eta}$. Equivalently, it is a functor from $\R$ as a poset category to vector spaces. Often the indexing set $\R$ is replaced by a finite sequence of indices---in that case, we can also think of the persistence module as a representation of a type $A_n$ quiver. More generally, any poset can be the indexing set;  here we always assume indexing by $\R$.

An important example of a persistence module is the \emph{interval module},
$\intmod{[a,b)}=\{I_\delta\xrightarrow{\iota_{\delta,\nu}}I_\nu\}_{\delta\leq\nu}$, for an interval $[a, b)\subseteq \R$,
with $I_\delta$ the one dimensional vector space, $k$, for $\delta\in[a,b)$ and zero otherwise, and $\iota_{\delta,\nu}$ the identity whenever $\delta\leq\nu$ are both in $[a,b)$ and zero otherwise. It is defined the same way for the other types of intervals. We say that a persistence module is \emph{finite dimensional} if each vector space in it is finite dimensional.

Given two persistence modules $\mathcal U=\{U_\delta\xrightarrow{h_{\delta,\nu}}U_\nu\}_{\delta\leq\nu}$ and $\mathcal V=\{V_\delta\xrightarrow{l_{\delta,\nu}}V_\nu\}_{\delta\leq\nu}$, a morphism $\varphi:\mathcal{U}\rightarrow \mathcal{V}$ is a collection of linear maps $\{\varphi_\delta:U_\delta\rightarrow V_\delta\}_{\delta}$ such that all the square diagrams commute: $\varphi_{\nu} h_{\delta,\nu}=l_{\delta,\nu}\varphi_{\delta}$. If we see $\mathcal{U}$ and $\mathcal{V}$ as functors, $\varphi$ is a natural transformation.

Most standard concepts for vector spaces naturally carry over to persistence modules using point-wise definitions. For example, a \emph{direct sum} of persistence modules, $\mathcal{U}\oplus\mathcal{V}$, consists of the direct sums $U_\delta\oplus V_\delta$ and maps $h_{\delta,\nu}\oplus l_{\delta,\nu}$. We can take the same approach for defining isomorphism, injection, surjection or images, kernels and cokernels of morphisms.

Crucially, every finite dimensional persistence module is isomorphic to a direct sum of interval modules, and the collection of the non-trivial interval modules in the sum and their multiplicities are unique (\cite{Gab, Cra-Boe}). This allows us to define a \emph{persistence diagram} of $\mathcal{U}$, denoted by $\Dgm(\mathcal{U})$, as the multiset containing the pair of endpoints, $(a, b)$, for each interval module in the decomposition given by an interval with those endpoints\footnote{
    For various reasons one might or might not want to include the diagonal elements $(a,a)$. We assume that no diagonal elements are part of the diagrams. The common alternatives are to consider all diagonal elements with infinite multiplicities or a single abstract element representing the whole diagonal with infinite multiplicity to be parts of the diagram; or to leave complete freedom of whether diagonal elements are or are not in a diagram. The particular choice is inconsequential, and the distance between diagrams differing only in this choice is zero for all commonly defined distances.
}. Generally, this is an invariant losing the information about the type of the intervals. Note, however, that all the persistence diagrams considered later have all the intervals of the type $[a. b)$.

To discuss stability results, we recall a standard notion of distance for persistence modules and persistence diagrams. Given two persistence modules, $\mathcal{U}$ and $\mathcal{V}$, as above, an {\em $\varepsilon$-interleaving} between them consists of two families of linear maps 
\[
f_{\delta}\colon U_\delta\to U_{\delta+\varepsilon}, \quad \text{and} \quad g_{\delta}\colon V_\delta\to V_{\delta+\varepsilon}
\]
for each $\delta\in\R$, such that the following four diagrams commute for every pair of indices $\delta\leq\nu$:

\begin{tabular}{cc}
    \begin{tikzcd}[row sep=normal, column sep=normal]
        U_\delta \arrow[rr, "h_{\delta,\nu}"] \arrow[dr, "f_\delta"]
        & & U_\nu \arrow[dr,"f_\nu"] &\\
        & V_{\delta+\varepsilon}  \arrow[rr, "l_{\delta+\varepsilon,\nu+\varepsilon}"] & & V_{\nu+\varepsilon}
    \end{tikzcd}
    &
    \begin{tikzcd}[row sep=normal, column sep=normal]
        & & U_{\delta+\varepsilon} \arrow[rr, "h_{\delta+\varepsilon,\nu+\varepsilon}"] 
        & & U_{\nu+\varepsilon}  &\\
        & V_\delta  \arrow[rr, "l_{\delta,\nu}"] \arrow[ur, "g_\delta"] & & V_\nu \arrow[ur, "g_\nu"]
    \end{tikzcd}
    \vspace{5mm}\\
    \begin{tikzcd}[row sep=normal, column sep=normal]
        U_\delta \arrow[rr, "h_{\delta,\delta+2\varepsilon}"] \arrow[dr, "f_\delta"]
        & & U_{\delta+2\epsilon}  &\\
        & V_{\delta+\varepsilon}  \arrow[ur, "g_{\delta+\varepsilon}"]
    \end{tikzcd}
    &
    \begin{tikzcd}[row sep=normal, column sep=normal]
        & & U_{\delta+\varepsilon} \arrow[dr, "f_{\delta+\varepsilon}"]  &\\
        & V_\delta  \arrow[ur, "g_\delta"] \arrow[rr, "l_{\delta,\delta+2\varepsilon}"]  & & V_{\delta+2\varepsilon}
    \end{tikzcd}
\end{tabular}

\noindent The interleaving distance between the persistence modules is then defined as \[
    \dint(\mathcal{U}, \mathcal{V}) = \inf\setdef{\epsilon\geq 0}{\text{an $\varepsilon$-interleaving between $\mathcal{U}$ and $\mathcal{V}$ exists}}.
\]
Note, in particular, that the interleaving distance between $\intmod{[a,b)}$ and $\intmod{[a-\varepsilon,b+\varepsilon)}$ is $\varepsilon$.

Next, let us recall that, given two multisets $Z$ and $W$ of elements in $\R^2$, their {\em bottleneck distance} is defined as
\[
    d_B(Z,W)=\inf_{\substack{Z'\subseteq Z\\W'\subseteq W}}\inf_{\substack{f\colon Z'\to W'\\\text{bijection}}}\max\{\sup_{z\in Z'}\lvert\lvert z-f(z)\rvert\rvert,\sup_{z\in Z\setminus Z'}\dist(z,\Delta),\sup_{w\in W\setminus W'}\dist(w,\Delta)\},
\]
where $\Delta=\{(x,x)\mid x\in\R^2\}$ denotes the diagonal of $\R^2$, and $\lvert\lvert-\rvert\rvert$ and $\dist(-,-)$
are $\ell_\infty$ norm and distance. The bottleneck distance on persistence diagrams and the interleaving distance on persistence modules are related:

\begin{theorem}[Algebraic stability theorem, \cite{ChaCoh-SteGliGuiOud}]\label{thm:algebraic_stability_theorem}
    For finitely dimensional persistence modules $\mathcal U$ and $\mathcal V$,
    \[
        d_B(\Dgm(\mathcal U),\Dgm(\mathcal V))\leq\dint(\mathcal{U}, \mathcal{V}).
    \]
\end{theorem}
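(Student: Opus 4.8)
The plan is to reduce the bottleneck bound to a matching statement and then run the induced-matching argument of Bauer and Lesnick. Recall that $d_B(\Dgm(\mathcal U),\Dgm(\mathcal V))$ is the infimum of those $\varepsilon\ge 0$ for which there exists an \emph{$\varepsilon$-matching} of the two diagrams: a partial bijection pairing up every point lying at $\ell_\infty$-distance more than $\varepsilon$ from the diagonal $\Delta$ and moving each matched point by at most $\varepsilon$ in $\ell_\infty$. Hence it suffices to produce, for every $\varepsilon>\dint(\mathcal U,\mathcal V)$, an $\varepsilon$-matching; since such an $\varepsilon$ admits an $\varepsilon$-interleaving by definition, I would fix interleaving maps $\{f_\delta\},\{g_\delta\}$ and construct the matching from them.

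The first block of work is purely algebraic and concerns a single morphism $\varphi\colon\mathcal M\to\mathcal N$ of finite dimensional modules. I would first record the elementary fact that a nonzero morphism $\intmod{[b,d)}\to\intmod{[b',d')}$ forces $b'\le b<d'\le d$ and is the identity on the common support. Using this, I would build the \emph{canonical induced matching} $\chi_\varphi$ between $\Dgm(\mathcal M)$ and $\Dgm(\mathcal N)$ by factoring $\mathcal M\twoheadrightarrow\im\varphi\hookrightarrow\mathcal N$ and composing two one-sided matchings: for an epimorphism one matches each bar of the target to a bar of the source with the \emph{same birth} and a later (or equal) death; for a monomorphism one matches each bar of the source to a bar of the target with the \emph{same death} and an earlier (or equal) birth. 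Establishing that these greedy choices are always feasible is the genuinely non-formal step — it relies on the restriction on morphisms above together with a Hall-type feasibility argument. The upshot is: a matched pair of $\chi_\varphi$ consists of an $\mathcal M$-bar $[p,q)$ and an $\mathcal N$-bar $[p',q')$ with $p'\le p$ and $q'\le q$, i.e. both endpoints of the $\mathcal M$-bar dominate those of the $\mathcal N$-bar.

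The second block is the quantitative estimate. Call a module \emph{$\delta$-trivial} if all its structure maps of length $\delta$ vanish, equivalently, if all its bars have length at most $\delta$. I would prove that when $\ker\varphi$ and $\coker\varphi$ are $\delta$-trivial, every bar left unmatched by $\chi_\varphi$ has length at most $\delta$, and every matched pair as above additionally satisfies $p\le p'+\delta$ and $q\le q'+\delta$. To finish, I would apply this to $\varphi=\Phi$, where $\Phi\colon\mathcal U\to\mathcal V[\varepsilon]$ (with $\mathcal V[\varepsilon]_\delta=V_{\delta+\varepsilon}$) is the morphism assembled from $\{f_\delta\}$ and $\Psi\colon\mathcal V\to\mathcal U[\varepsilon]$ the one assembled from $\{g_\delta\}$. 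The interleaving triangles say that $\Psi[\varepsilon]\circ\Phi$ and $\Phi[\varepsilon]\circ\Psi$ are the length-$2\varepsilon$ structure maps of $\mathcal U$ and $\mathcal V$; hence $\ker\Phi$ sits inside the submodule of $\mathcal U$ on which the $2\varepsilon$-structure maps vanish, and $\coker\Phi$, after the $\varepsilon$-shift, is a quotient of the cokernel of the $2\varepsilon$-structure map of $\mathcal V$, so both $\ker\Phi$ and $\coker\Phi$ are $2\varepsilon$-trivial.

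Applying the estimate with $\delta=2\varepsilon$ gives a matching between $\Dgm(\mathcal U)$ and $\Dgm(\mathcal V[\varepsilon])$; undoing the shift via $\Dgm(\mathcal V[\varepsilon])=\Dgm(\mathcal V)-(\varepsilon,\varepsilon)$ turns a matched pair into a $\mathcal U$-bar $[p,q)$ and a $\mathcal V$-bar $[p'+\varepsilon,q'+\varepsilon)$ with $p'\le p\le p'+2\varepsilon$ and $q'\le q\le q'+2\varepsilon$, so now each coordinate differs by at most $\varepsilon$. Here the \emph{sign} of the inequalities coming from the mono--epi factorisation is essential: it forces the $\mathcal V$-endpoint to sit in the middle of a window of radius $\varepsilon$ about the $\mathcal U$-endpoint (a mere two-sided bound of $2\varepsilon$ would leave a gap of up to $3\varepsilon$ after the shift). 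Unmatched bars keep length at most $2\varepsilon$ and hence stay within $\varepsilon$ of $\Delta$, so we have an $\varepsilon$-matching and $d_B\le\varepsilon$; letting $\varepsilon\downarrow\dint(\mathcal U,\mathcal V)$ gives the claim. I expect the main obstacle to be the first block — constructing the induced matchings for monomorphisms and epimorphisms, proving their feasibility, and extracting the $\delta$-triviality displacement bounds — since everything afterwards is bookkeeping. An alternative is the original interpolation argument of Chazal, Cohen-Steiner, Glisse, Guibas and Oudot: build a one-parameter family $\{\mathcal W_s\}_{s\in[0,2\varepsilon]}$ with $\mathcal W_0\cong\mathcal U$, $\mathcal W_{2\varepsilon}\cong\mathcal V$ and consecutive members interleaved in a controlled ``strong'' way, prove a box lemma bounding $d_B(\Dgm(\mathcal W_s),\Dgm(\mathcal W_t))$ by $t-s$ for such interleavings, and integrate along the path; there the obstacle becomes the explicit construction of the interpolation and the box lemma.
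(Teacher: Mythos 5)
This statement is not proved in the paper at all: it is quoted as a known result with a citation to Chazal--Cohen-Steiner--Glisse--Guibas--Oudot, so there is no in-paper argument to compare yours against. Your sketch is a correct outline of the standard proof from the literature, specifically the induced-matching proof of Bauer and Lesnick: the structure of morphisms between interval modules, the mono/epi factorisation through $\im\varphi$ with the ``same death, earlier birth'' and ``same birth, later death'' one-sided matchings, the $\delta$-triviality of $\ker\Phi$ and $\coker\Phi$ for $\delta=2\varepsilon$ extracted from the interleaving triangles, and the bookkeeping after undoing the $\varepsilon$-shift are all stated accurately, including the point that the one-sided inequalities are what make the shifted endpoints land within $\varepsilon$ rather than $3\varepsilon$. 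You correctly flag the feasibility of the one-sided matchings as the non-formal step, and your mention of the original interpolation-plus-box-lemma argument matches the approach actually taken in the cited reference. The only caveat is that this is a proof plan rather than a complete proof---the first block, in particular the counting argument that makes the greedy one-sided matchings well defined, is deferred---but for a result the paper itself imports as a black box, the outline is sound and nothing in it would fail.
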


\subsection{Stability result}\label{sub:stability}

In this section, we give definitions of \v{C}ech complexes and their $\Gamma$-subcomplexes, recall the definition of a six-pack of persistent diagrams, and show their stability with respect to suitable constrained Gromov-Hausdorff distances.

\paragraph{\v{Cech} complexes and filtrations.} For a metric pair $(A,X)$, we denote by $\check{C}(X;A)$ the {\em ambient \v{C}ech filtration} of $(A,X)$, i.e., the filtration $\{\check{C}_\delta(X;A)\xrightarrow{i_{\delta,\varepsilon}}\check{C}_\varepsilon(X;A)\}_{\delta\leq\varepsilon}$,
where the objects $\check{C}_\nu(X;A)$, for $\nu\geq 0$, are simplicial complexes defined as the collection of simplices $\sigma\subseteq X$ for which there exists $a\in A$ such that $d(a,x)\leq \nu$ for all $x\in\sigma$, and the maps $i_{\delta,\varepsilon}$ are inclusions.

Let $\extMP{}$ be a chromatic metric pair, and $\Gamma$ be a simplicial complex with vertices in $\N$. 
Based on \cite[Section~3.4]{CulDraEdeSag1}, we define the \emph{$\Gamma$-subcomplex} of the \v{C}ech complex at radius $\delta$, $\check{C}_\delta(X;A)$, as 
\[
\check{C}_\delta^\Gamma(\chi; A) = \left\{ \mu\in\check{C}_\delta(X;A) \ \middle|\ \chi(\mu)\in\Gamma \right\},
\]
and we denote by $\check{C}^\Gamma(\chi; A)$ the filtration $\left\{ \check{C}_\delta^\Gamma(\chi; A) \right\}_{\delta\geq0}$. 
Note that the coloring map $\chi$ naturally extends to a simplicial map from any complex with vertices in $X$ to a complete complex on $\N$ (or on $\im\chi$). Keeping the symbol $\chi$ to also denote this extension, we can write $\check{C}_\delta^\Gamma(\chi; A) = \chi^{-1}(\Gamma) \cap \check{C}_\delta(X;A)$.
Up to a natural homotopy equivalence, the complexes $\check{C}_\delta^\Gamma(\chi; A)$ are the same as the $\Gamma$-subcomplexes of chromatic alpha complexes studied in \cite{CulDraEdeSag1}. Indeed, in the proof of \cite[Theorem~3.8]{CulDraEdeSag1}, we can replace the Voronoi planks defined there by their uncropped versions, and replace the right side of the diagrams with the \v{C}ech versions of the complexes, while keeping the left side of the diagrams the same.

\paragraph{Six-packs of persistent diagrams.} Let $\extMP{}$ be a chromatic metric pair, and $\Lambda\leq\Gamma$ be two simplicial complexes with vertices in $\N$. Additionally, choose a degree $p\in\N$. We give definitions of six persistence modules parametrized by this fixed data. Consider the two filtrations and inclusion between them $\iota: \check{C}^\Lambda(\chi; A) \xrightarrow{} \check{C}^\Gamma(\chi; A)$, defined pointwise as $\iota_\delta: \check{C}_\delta^\Lambda(\chi; A) \xrightarrow{\subseteq} \check{C}_\delta^\Gamma(\chi; A)$. Denote by $H_p(\check{C}^\Gamma(\chi; A))$ the persistence module obtained by applying the degree $p$ homology functor, $H_p$, to the filtration $\check{C}^\Gamma(\chi; A)$, and analogously for $\Lambda$. Furthermore, apply $H_p$ to $\iota$ and denote by $\hat\iota=H_p(\iota):H_p(\check{C}^\Lambda(\chi; A))\rightarrow H_p(\check{C}^\Gamma(\chi; A))$ the morphism between the two persistence modules. Recall that we can define image, kernel and cokernel of $\hat\iota$ pointwise. That is, e.g., $\ker\hat\iota$ is a persistence module with vector spaces $(\ker\hat\iota)_\delta = \ker\hat\iota_\delta$ and maps the restrictions of the maps induced by the original filtration inclusions. Finally, consider the two \v{C}ech filtrations as a pair and apply the relative homology to get a persistence module $H_p(\check{C}^\Gamma(\chi; A), \check{C}^\Lambda(\chi; A))$. Altogether, we have six persistence modules:

\smallskip
\begin{compactenum}[-]
    \item $\Dom{A}{\chi}{\Lambda}{\Gamma}{p} = \dom \hat\iota = H_p(\check{C}^\Lambda(\chi; A))$,

    \item $\Cod{A}{\chi}{\Lambda}{\Gamma}{p} = \codom \hat\iota = H_p(\check{C}^\Gamma(\chi; A))$,

    \item $\Img{A}{\chi}{\Lambda}{\Gamma}{p} = \im \hat\iota$,

    \item $\Ker{A}{\chi}{\Lambda}{\Gamma}{p} = \ker \hat\iota$,

    \item $\Cok{A}{\chi}{\Lambda}{\Gamma}{p} = \coker \hat\iota$,

    \item $\Rel{A}{\chi}{\Lambda}{\Gamma}{p} = H_p(\check{C}^\Gamma(\chi; A), \check{C}^\Lambda(\chi; A))$.
\end{compactenum}
The degree $p$ six-pack of persistence diagrams is then the collection of the corresponding persistence diagrams:
\begin{multline*}
    \{
        \Dgm \Dom{A}{\chi}{\Lambda}{\Gamma}{p},\ 
        \Dgm \Cod{A}{\chi}{\Lambda}{\Gamma}{p},\ 
        \Dgm \Img{A}{\chi}{\Lambda}{\Gamma}{p},\  \\
        \Dgm \Ker{A}{\chi}{\Lambda}{\Gamma}{p},\ 
        \Dgm \Cok{A}{\chi}{\Lambda}{\Gamma}{p},\ 
        \Dgm \Rel{A}{\chi}{\Lambda}{\Gamma}{p}
    \}
\end{multline*}

\paragraph{The stability.} For a finite-dimensional simplicial complex $\Gamma$ with vertex set in $\N$, we define $C(\Gamma)$ as the set of maximal faces of $\Gamma$. This will be the right strength of constraints for the stability of six-packs of persistence diagrams.

\begin{lemma}
	Let $\Gamma$ be a finite-dimensional simplicial complex with vertices in $\N$. Then, every $C(\Gamma)$-constrained map $f\colon\extMP{1}\to\extMP{2}$ between chromatic metric pairs induces a simplicial map
	\[
		f_\delta^\Gamma\colon\check{C}_\delta^\Gamma(\chi_1;A_1)\to\check{C}_{\delta+\dis(f)}^\Gamma(\chi_2;A_2)
	\]
	for every $\delta\geq 0$.
\end{lemma}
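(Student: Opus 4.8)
The plan is to take $f_\delta^\Gamma$ to be $f$ itself, regarded as a map on vertex sets, and to verify that this assignment is a genuine simplicial map with the stated shift in the radius parameter. Before doing that, I would record the single combinatorial fact that makes everything work: since $\Gamma$ is finite-dimensional, every face of $\Gamma$ is contained in a maximal face, i.e., in some element of $C(\Gamma)$. Indeed, if a face $\sigma\in\Gamma$ is not maximal it can be enlarged to a face $\sigma'\supsetneq\sigma$, which strictly increases the dimension, so after at most $\dim\Gamma$ such steps one reaches a maximal face above $\sigma$.

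With this in hand, fix $\delta\ge 0$ and a simplex $\mu=\{x_0,\dots,x_k\}\in\check{C}_\delta^\Gamma(\chi_1;A_1)$. By definition $\chi_1(\mu)\in\Gamma$, so I would choose a maximal face $\tau\in C(\Gamma)$ with $\chi_1(\mu)\subseteq\tau$. Then $x_i\in\chi_1^{-1}(\tau)$ for every $i$, and since $f$ is $\tau$-constrained, $f(x_i)\in\chi_2^{-1}(\tau)\subseteq X_2$. In particular the case $k=0$ shows that $x\mapsto f(x)$ is well-defined on all vertices of $\check{C}_\delta^\Gamma(\chi_1;A_1)$; and in general $\chi_2(f(\mu))\subseteq\tau\in\Gamma$, so $\chi_2(f(\mu))\in\Gamma$ because $\Gamma$ is closed under passing to subsets. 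Note that the face $\tau$ depends on the particular simplex $\mu$; this is harmless, since a simplicial map is determined by its vertex map and we only need to certify that each simplex is sent to a simplex.

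Next I would control the \v{C}ech radius. Since $\mu\in\check{C}_\delta(X_1;A_1)$, there is a point $a\in A_1$ with $d_1(a,x_i)\le\delta$ for all $i$; then, by the definition of $\dis f$,
\[
    d_2(f(a),f(x_i))\le d_1(a,x_i)+\dis f\le\delta+\dis f
\]
for every $i$, so $f(a)$ witnesses $f(\mu)\in\check{C}_{\delta+\dis f}(X_2;A_2)$. Combined with the colour condition from the previous paragraph this gives $f(\mu)\in\check{C}_{\delta+\dis f}^\Gamma(\chi_2;A_2)$, and since $\mu$ was arbitrary, the vertex assignment $f_\delta^\Gamma:=f$ is the desired simplicial map.

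I do not expect any genuine obstacle. The only step requiring care is the appeal to finite-dimensionality of $\Gamma$ to ensure that every colour-simplex $\chi_1(\mu)$ appearing in the subcomplex lies inside one of the maximal faces recorded in $C(\Gamma)$; this is exactly what lets the $C(\Gamma)$-constraint be applied to all vertices of $\mu$ simultaneously, and everything else is a one-line distortion estimate.
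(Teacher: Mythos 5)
Your proof is correct and follows essentially the same route as the paper's: define $f_\delta^\Gamma$ on vertices by $f$ itself, use a witness point $a$ together with the distortion bound to get the radius shift, and use that $\chi_1(\mu)$ lies in some maximal face $\tau\in C(\Gamma)$ so that the $\tau$-constraint forces $\chi_2(f(\mu))\subseteq\tau\in\Gamma$. The only difference is that you make explicit the (correct) observation that finite-dimensionality guarantees every face of $\Gamma$ sits inside a maximal one, which the paper uses implicitly.
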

\begin{proof}
	For every $\delta\geq 0$, we define $f_\delta\colon\check{C}_\delta(X_1;A_1)\to\check{C}_{\delta+\dis(f)}(X_2;A_2)$ on the vertex set by setting $f_\delta(x)=f(x)$ for every vertex $x$ of $\Gamma$. 
    First, we prove that it is well-defined, and then that its restriction $f_\delta^\Gamma=f_\delta|_{\check{C}_\delta^\Gamma(\chi_1;A_1)}$ maps into $\check{C}^\Gamma_{\delta+\dis(f)}(\chi_2;A_2)$, i.e., for every simplex $\mu$, $\chi_1(\mu)\in\Gamma$ implies $\chi_2(f(\mu))\in\Gamma$.
	
	To prove that $f_\delta$ is well-defined, pick $\sigma\in\check{C}_\delta(X_1;A_1)$ and $a\in A_1$ such that $d_1(x,a)\leq\delta$ for every $x\in\sigma$. Then,
	\[
		d_2(f(x),f(a))\leq d_1(x,a)+\dis f\leq\delta+\dis f,
	\]
	and so $f(\sigma)\in\check{C}_{\delta+\dis(f)}(X_2;A_2)$.
	
	Let now $\mu\in\check{C}_\delta(X_1;A_1)$ such that $\chi_1(\mu)\in\Gamma$. 
    Then, $\chi_1(\mu)\subseteq \tau\in C(\Gamma)$ for some maximal simplex~$\tau$. 
    
    Hence, $\mu\subseteq \chi_1^{-1}(\tau)$, and so $f(\mu)\subseteq \chi_2^{-1}(\tau)$. Rewriting the latter containment, we obtain that $\chi_2(f_\delta(\mu))=\chi_2(f(\mu))\subseteq\tau$, and so $\chi_2(f_\delta(\mu))\in\Gamma$.
\end{proof}

Let us recall that two maps $f,g\colon K\to L$ between simplicial complexes are {\em contiguous} if $f(\sigma)\cup g(\sigma)\in L$ for every $\sigma\in K$.
\begin{lemma} \label{lem:diagrams_gamma}
	Let $\MP{1}$ and $\MP{2}$ be two chromatic metric pairs and $\Gamma$ be a finite-dimensional simplicial complex with vertices in $\N$. Let $f^1\colon\MP{1}\to\MP{2}$ and $f^2\colon\MP{2}\to\MP{1}$ be two $C(\Gamma)$-constrained maps such that $\max\{\dis f^1,\dis f^2,\codis(f^1,f^2)\}\leq2\varepsilon$. 
    Then, the following diagrams, for $0\leq\delta<\nu$,

    \begin{center}
    \begin{tikzcd}[row sep=small, column sep=-3mm]
    \check{C}^\Gamma_\delta(\chi_1;A_1) \arrow[rr, "i_{\delta,\nu}"] \arrow[dr, "f^1_\delta"]
    & & \check{C}^\Gamma_{\nu}(\chi_1;A_1) \arrow[dr,"f^1_\nu"] &\\
    & \check{C}^\Gamma_{\delta+2\varepsilon}(\chi_2;A_2)  \arrow[rr, "i_{\delta+2\varepsilon,\nu+2\varepsilon}"] & & \check{C}^\Gamma_{\nu+2\varepsilon}(\chi_2;A_2)
    \end{tikzcd}\hfill
    \begin{tikzcd}[row sep=small, column sep=-3mm]
    & & \check{C}^\Gamma_{\delta+2\varepsilon}(\chi_1;A_1) \arrow[rr, "i_{\delta+2\varepsilon,\nu+2\varepsilon}"] 
    & & \check{C}^\Gamma_{\nu+2\varepsilon}(\chi_1;A_1)  &\\
    & \check{C}^\Gamma_{\delta}(\chi_2;A_2)  \arrow[rr, "i_{\delta,\nu}"] \arrow[ur, "f^2_\delta"] & & \check{C}^\Gamma_\nu(\chi_2;A_2) \arrow[ur, "f^2_\nu"]
    \end{tikzcd}
    \end{center}
    
    \noindent commute, and the following ones

    \begin{center}
    \begin{tikzcd}[row sep=small, column sep=-2mm]
    \check{C}^\Gamma_\delta(\chi_1;A_1) \arrow[rr, "i_{\delta,\delta+4\varepsilon}"] \arrow[dr, "f^1_\delta"]
    & & \check{C}^\Gamma_{\delta+4\varepsilon}(\chi_1;A_1)  &\\
    & \check{C}^\Gamma_{\delta+2\varepsilon}(\chi_2;A_2)  \arrow[ur, "f^2_{\delta+2\varepsilon}"]  \\
    \end{tikzcd}\hspace{5mm}
    \begin{tikzcd}[row sep=small, column sep=-2mm]
    & & \check{C}^\Gamma_{\delta+2\varepsilon}(\chi_1;A_1) \arrow[dr, "f^1_{\delta+2\varepsilon}"]  &\\
    & \check{C}^\Gamma_\delta(\chi_2;A_2)  \arrow[ur, "f^2_\delta"] \arrow[rr, "i_{\delta,\delta+4\varepsilon}"]  & & \check{C}^\Gamma_{\delta+4\varepsilon}(\chi_2;A_2)  \\
    \end{tikzcd}
    \end{center}
    
    \noindent commute up to contiguity.
\end{lemma}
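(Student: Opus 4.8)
The plan is to build directly on the preceding lemma, which supplies, for each $C(\Gamma)$-constrained map $f^k$, a simplicial map $f^k_\delta\colon\check{C}^\Gamma_\delta(\chi_k;A_k)\to\check{C}^\Gamma_{\delta+\dis f^k}(\chi_{3-k};A_{3-k})$; since $\dis f^k\leq 2\varepsilon$ and the \v{C}ech filtration is monotone in the radius, each $f^k_\delta$ can be viewed as a map into $\check{C}^\Gamma_{\delta+2\varepsilon}(\chi_{3-k};A_{3-k})$, so all four diagrams are well-typed. For the two squares I would argue that a simplicial map is determined by its action on vertices: on a vertex $x$ every inclusion $i_{\bullet,\bullet}$ acts as the identity and every $f^1_\bullet$ acts as $f^1$, so both composites around each square send $x$ to $f^1(x)$ (resp. $f^2(x)$) and therefore coincide. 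Together with the well-definedness already established in the previous lemma, this gives strict commutativity with no further work.

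For the two triangles I would recall that maps $f,g\colon K\to L$ are contiguous when $f(\sigma)\cup g(\sigma)\in L$ for every $\sigma\in K$, and treat the left triangle (the right one being symmetric under $1\leftrightarrow 2$). So I must show that for every $\sigma\in\check{C}^\Gamma_\delta(\chi_1;A_1)$ the set $\sigma\cup f^2(f^1(\sigma))$ is a simplex of $\check{C}^\Gamma_{\delta+4\varepsilon}(\chi_1;A_1)$, which splits into a \v{C}ech condition and a color condition. For the \v{C}ech condition, pick a witness $a\in A_1$ with $d_1(x,a)\leq\delta$ for all $x\in\sigma$ and propose $b=f^2(f^1(a))\in A_1$ as a witness for the enlarged simplex: for $x\in\sigma$ one has $d_1(f^2(f^1(x)),b)\leq d_2(f^1(x),f^1(a))+\dis f^2\leq d_1(x,a)+\dis f^1+\dis f^2\leq\delta+4\varepsilon$, while applying the codistortion inequality to the pair $(x,f^1(a))$ gives $d_1(x,b)\leq d_2(f^1(x),f^1(a))+\codis(f^1,f^2)\leq d_1(x,a)+\dis f^1+\codis(f^1,f^2)\leq\delta+4\varepsilon$. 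For the color condition, $\chi_1(\sigma)\in\Gamma$ lies in some maximal face $\tau\in C(\Gamma)$, so $\sigma\subseteq\chi_1^{-1}(\tau)$; since $f^1$ and $f^2$ are $C(\Gamma)$-constrained, hence $\tau$-constrained, $f^1(\sigma)\subseteq\chi_2^{-1}(\tau)$ and then $f^2(f^1(\sigma))\subseteq\chi_1^{-1}(\tau)$, whence $\chi_1(\sigma\cup f^2(f^1(\sigma)))\subseteq\tau\in\Gamma$. This yields contiguity; the right triangle is handled identically with $a,b\in A_2$, $b=f^1(f^2(a))$, using $\dis f^1,\dis f^2,\codis(f^1,f^2)$ in the same pattern.

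The only point that requires genuine care—and hence the main obstacle—is the bookkeeping of the two distinct metric estimates in the triangle case: controlling the distance from $f^2f^1(x)$ to the new witness $b$ uses $\dis f^1$ together with $\dis f^2$, whereas controlling the distance from the original vertex $x$ to that same $b$ forces a switch to $\codis(f^1,f^2)$ (paired with $\dis f^1$), and one must verify that both chains of triangle inequalities land exactly at $\delta+4\varepsilon$ so that the radius shift in the statement is the correct one.
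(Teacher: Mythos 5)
Your proposal is correct: the strict commutativity of the squares on vertices, the choice of witness $b=f^2(f^1(a))$ for the contiguity of the triangles, and the split into the \v{C}ech estimate (using $\dis f^1+\dis f^2$ for the image vertices and $\dis f^1+\codis(f^1,f^2)$ for the original ones) together with the color argument via a maximal face $\tau\in C(\Gamma)$ are exactly right, and both chains do land at $\delta+4\varepsilon$. The paper itself only says the lemma is a straightforward adaptation of \cite[Proposition 15]{ChoMem3}, and your argument is precisely that adaptation written out, so it takes essentially the same approach.
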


\begin{proof}
	The proof is a straightforward adaptation of that of \cite[Proposition 15]{ChoMem3}.
\end{proof}

Let $(A,\chi)$ be a chromatic metric pair. If $\Gamma$ and $\Lambda$ are two finite-dimensional simplicial complexes with vertices in $\N$ such that $\Lambda\subseteq\Gamma$, then there is a canonical inclusion $j_\delta\colon\check{C}^\Lambda_\delta(\chi;A)\to\check{C}_\delta^\Gamma(\chi;A)$ for every $\delta\geq 0$. The following result is an easy consequence of what we have constructed so far.

\begin{lemma} \label{lem:diagrams_gamma-lambda}
    Let $\MP{1}$ and $\MP{2}$ be two chromatic metric pairs and $\Lambda\subseteq \Gamma$ be two finite-dimensional simplicial complexes with vertices in $\N$. Let $f^1\colon\MP{1}\to\MP{2}$ and $f^2\colon\MP{2}\to\MP{1}$ be two $C(\Gamma)\cup C(\Lambda)$-constrained maps and $\max\{\dis f^1,\dis f^2,\codis(f^1,f^2)\}\leq 2\varepsilon$. Then, all the squares and triangles in the following diagrams (where we removed the indices where there is no risk of ambiguity, for the sake of simplicity) commute up to contiguity:

    \begin{center}
    \begin{tikzcd}[row sep=small, column sep=small]
    \check{C}^\Gamma_\delta(\chi_1;A_1) \arrow[rr, "i"] \arrow[dr, "f^1"]
    & & \check{C}^\Gamma_{\nu}(\chi_1;A_1) \arrow[dr,"f^1"]
    \arrow[from=dd, near start, "j"] &\\
    & \check{C}^\Gamma_{\delta+2\varepsilon}(\chi_2;A_2)  \arrow[rr, near start, crossing over, "i"] & & \check{C}^\Gamma_{\nu+2\varepsilon}(\chi_2;A_2)  \\
    \check{C}^\Lambda_\delta(\chi_1;A_1) \arrow[rr, near end, "i"] \arrow[dr, "f^1"] \arrow[uu, "j"] & & \check{C}^\Lambda_\nu(\chi_1;A_1) \arrow[dr, "f^1"]  & \\
    & \check{C}^\Lambda_{\delta+2\varepsilon}(\chi_2;A_2) \arrow[rr, "i"] \arrow[uu, crossing over, near end, "j"] & & \check{C}^\Lambda_{\nu+2\varepsilon}(\chi_2;A_2) \arrow[uu, "j"] \\
    \end{tikzcd}

    \begin{tikzcd}[row sep=small, column sep=small]
    & & \check{C}^\Gamma_{\delta+2\varepsilon}(\chi_1;A_1) \arrow[from=dd, near start, "j"] \arrow[rr, "i"] 
    & & \check{C}^\Gamma_{\nu+2\varepsilon}(\chi_1;A_1)  &\\
    & \check{C}^\Gamma_{\delta}(\chi_2;A_2)  \arrow[rr, crossing over, near end, "i"] \arrow[ur, "f^2"] & & \check{C}^\Gamma_\nu(\chi_2;A_2) \arrow[ur, "f^2"] \\
    & & \check{C}^\Lambda_{\delta+2\varepsilon}(\chi_1;A_1) \arrow[rr, near start, "i"]  & & \check{C}^\Lambda_{\nu+2\varepsilon}(\chi_1;A_1) \arrow[uu, "j"]  & \\
    & \check{C}^\Lambda_\delta(\chi_2;A_2) \arrow[ur, "f^2"] \arrow[rr, "i"] \arrow[uu, "j"]  & & \check{C}^\Lambda_\nu(\chi_2;A_2) \arrow[uu, crossing over, near start, "j"] \arrow[ur, "f^2"]  \\
    \end{tikzcd}
    \end{center}

    \begin{tikzcd}[row sep=small, column sep=small]
    \check{C}^\Gamma_\delta(\chi_1;A_1) \arrow[rr, "i"] \arrow[dr, "f^1"]
    & & \check{C}^\Gamma_{\delta+4\varepsilon}(\chi_1;A_1)  &\\
    & \check{C}^\Gamma_{\delta+2\varepsilon}(\chi_2;A_2)  \arrow[ur, "f^2"]  \\
    \check{C}^\Lambda_\delta(\chi_1;A_1) \arrow[rr, near end, "i"] \arrow[dr, "f^1"] \arrow[uu, "j"] & & \check{C}^\Lambda_{\delta+4\varepsilon}(\chi_1;A_1) \arrow[uu, "j"] & \\
    & \check{C}^\Lambda_{\delta+2\varepsilon}(\chi_2;A_2) \arrow[uu, crossing over, near end, "j"] \arrow[ur, "f^2"] \\
    \end{tikzcd}
    \hspace{-1cm}
    \begin{tikzcd}[row sep=small, column sep=small]
    & & \check{C}^\Gamma_{\delta+2\varepsilon}(\chi_1;A_1) \arrow[dr, "f^1"] \arrow[from=dd, near start, "j"]  &\\
    & \check{C}^\Gamma_\delta(\chi_2;A_2)  \arrow[ur, "f^2"] \arrow[rr, crossing over, near start, "i"]  & & \check{C}^\Gamma_{\delta+4\varepsilon}(\chi_2;A_2)  \\
    & & \check{C}^\Lambda_{\delta+2\varepsilon}(\chi_1;A_1) \arrow[dr, "f^1"] & \\
    & \check{C}^\Lambda_\delta(\chi_2;A_2) \arrow[uu, crossing over, near end, "j"] \arrow[ur, "f^2"] \arrow[rr, "i"] & & \check{C}^\Lambda_{\delta+4\varepsilon}(\chi_2;A_2) \arrow[uu, "j"] \\
    \end{tikzcd}
\end{lemma}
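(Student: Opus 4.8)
The plan is to deduce the statement from Lemma~\ref{lem:diagrams_gamma} together with the observation that the vertical inclusions $j$ interact \emph{strictly} (on the nose) with both the horizontal inclusions $i$ and the induced simplicial maps $f^k_\delta$, so that no new contiguity is introduced when the $\Lambda$-level and $\Gamma$-level pictures are stacked on top of one another.

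First, a $C(\Gamma)\cup C(\Lambda)$-constrained map is in particular both $C(\Gamma)$-constrained and $C(\Lambda)$-constrained, so the construction of induced simplicial maps recalled above produces, for each of $f^1,f^2$, maps $(f^k_\delta)^\Gamma\colon\check{C}^\Gamma_\delta(\chi_i;A_i)\to\check{C}^\Gamma_{\delta+\dis f^k}(\chi_j;A_j)$ and $(f^k_\delta)^\Lambda\colon\check{C}^\Lambda_\delta(\chi_i;A_i)\to\check{C}^\Lambda_{\delta+\dis f^k}(\chi_j;A_j)$, both given on vertices by $x\mapsto f^k(x)$. Since $\Lambda\subseteq\Gamma$, the map $(f^k_\delta)^\Lambda$ is simply the restriction of $(f^k_\delta)^\Gamma$ to the subcomplex $\check{C}^\Lambda_\delta$, and the fact that this restriction indeed lands in $\check{C}^\Lambda_{\delta+\dis f^k}$ -- i.e.\ that $\chi_i(\mu)\in\Lambda$ forces $\chi_j(f^k(\mu))\in\Lambda$ -- is precisely what the $C(\Lambda)$-constraint provides, which is the reason the hypothesis demands both families of constraints. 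Consequently every square in the four displayed diagrams whose two vertical edges are $j$'s commutes on the nose: when its horizontal edges are $f^k$'s this is the identity $(f^k_\delta)^\Gamma\circ j=j\circ(f^k_\delta)^\Lambda$, and when its horizontal edges are $i$'s it is the triviality that nested inclusions of complexes compose in either order.

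It remains to handle the faces living at a single level. Applying Lemma~\ref{lem:diagrams_gamma} with the complex $\Gamma$ gives the commutativity of the $\Gamma$-level slab of each of the four diagrams (strictly for the $f^1$- and $f^2$-squares, up to contiguity for the two triangles), and applying it with $\Lambda$ does the same for the $\Lambda$-level slab. Since a simplicial map is always contiguous to itself ($f(\sigma)\cup f(\sigma)=f(\sigma)\in L$), strict commutativity is a special case of commutativity up to contiguity; hence every square and every triangle occurring in the four diagrams commutes up to contiguity. The only step that is not purely formal is the identification $(f^k_\delta)^\Lambda=(f^k_\delta)^\Gamma|_{\check{C}^\Lambda_\delta}$, which is exactly what forces the $C(\Lambda)$-part of the hypothesis; and since the statement asks only that each face separately commute up to contiguity, no compatibility among the several contiguity witnesses has to be checked, so there is no genuine obstacle beyond this bookkeeping.
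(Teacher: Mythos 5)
Your proof is correct and follows exactly the route the paper intends: the paper gives no written proof of this lemma, calling it only ``an easy consequence of what we have constructed so far,'' namely of Lemma~\ref{lem:diagrams_gamma} applied separately at the $\Gamma$- and $\Lambda$-levels together with the canonical inclusions $j$. Your observations that the mixed $i$--$j$ and $f^k$--$j$ squares commute on the nose, and that the $C(\Lambda)$-part of the hypothesis is precisely what makes the restriction of $(f^k_\delta)^\Gamma$ to $\check{C}^\Lambda_\delta$ land in $\check{C}^\Lambda_{\delta+2\varepsilon}$, correctly supply the details the paper omits.
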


\begin{theorem}\label{thm:stability_6_pack}
	Let $\extMP{1}$ and $\extMP{2}$ be two chromatic metric pairs with $X_1$ and $X_2$ finite, $\Lambda\leq \Gamma$ be finite-dimensional simplicial complexes on $\N$, and $C=C(\Gamma)\cup C(\Lambda)$. Choose $p\in\N$. Let $\mathcal{U}_1$ be one of the persistence modules
    $\Dom{A_1}{\chi_1}{\Lambda}{\Gamma}{p}$,
    $\Cod{A_1}{\chi_1}{\Lambda}{\Gamma}{p}$,
    $\Img{A_1}{\chi_1}{\Lambda}{\Gamma}{p}$,
    $\Ker{A_1}{\chi_1}{\Lambda}{\Gamma}{p}$,
    $\Cok{A_1}{\chi_1}{\Lambda}{\Gamma}{p}$,
    $\Rel{A_1}{\chi_1}{\Lambda}{\Gamma}{p}$,
    and $\mathcal{U}_2$ be the corresponding persistence module for the pair $(A_2, \chi_2)$. Then
	\[
	d_B(\Dgm\,\mathcal{U}_1,\Dgm\,\mathcal{U}_2)\leq 2\GH^{C}(\MP{1},\MP{2}).
	\]

\end{theorem}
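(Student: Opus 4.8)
The plan is to bound the interleaving distance $\dint(\mathcal U_1,\mathcal U_2)$ by $2\GH^{C}(\MP{1},\MP{2})$ and then conclude with the Algebraic Stability Theorem (Theorem~\ref{thm:algebraic_stability_theorem}). This last move is legitimate precisely because $X_1$ and $X_2$ are finite: then every \v{C}ech complex occurring is a finite simplicial complex, so all six persistence modules of either pair, together with the images, kernels and cokernels built from $\hat\iota$, are finite-dimensional. So first I would fix an arbitrary $\varepsilon>\GH^{C}(\MP{1},\MP{2})$ and choose $C$-constrained maps $f^1\colon\MP{1}\to\MP{2}$ and $f^2\colon\MP{2}\to\MP{1}$ with $\max\{\dis f^1,\dis f^2,\codis(f^1,f^2)\}\leq 2\varepsilon$. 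Because $C=C(\Gamma)\cup C(\Lambda)$, both maps are $C(\Gamma)$-constrained and $C(\Lambda)$-constrained, so by the lemma showing that $C(\Gamma)$-constrained maps induce simplicial maps between \v{C}ech $\Gamma$-subcomplexes (and its analogue for $\Lambda$) they induce simplicial maps $f^i_\delta$ on the \v{C}ech $\Gamma$- and $\Lambda$-subcomplexes, increasing the filtration value by at most $2\varepsilon$.

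Next I would apply the degree-$p$ homology functor $H_p$. Using the standard fact that contiguous simplicial maps, and contiguous maps of pairs, induce equal maps on homology, respectively on relative homology, Lemma~\ref{lem:diagrams_gamma}---applied both with $\Gamma$ and with $\Lambda$ in place of $\Gamma$, which is allowed since $f^1,f^2$ are both $C(\Gamma)$- and $C(\Lambda)$-constrained---yields that the linear maps $H_p(f^1_\delta)$ and $H_p(f^2_\delta)$ form honest $2\varepsilon$-interleavings between $H_p(\check{C}^\Gamma(\chi_1;A_1))$ and $H_p(\check{C}^\Gamma(\chi_2;A_2))$, and between $H_p(\check{C}^\Lambda(\chi_1;A_1))$ and $H_p(\check{C}^\Lambda(\chi_2;A_2))$. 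Lemma~\ref{lem:diagrams_gamma-lambda} then shows that these two interleavings are compatible with the inclusion-induced morphisms $\hat\iota_i\colon H_p(\check{C}^\Lambda(\chi_i;A_i))\to H_p(\check{C}^\Gamma(\chi_i;A_i))$, i.e., they assemble into a $2\varepsilon$-interleaving of the morphism $\hat\iota_1$ with the morphism $\hat\iota_2$; moreover, since each $f^i_\delta$ is simultaneously defined on the $\Gamma$- and the $\Lambda$-subcomplex, it is a map of pairs, so the same data produces a $2\varepsilon$-interleaving of the relative persistence modules $\Rel{A_1}{\chi_1}{\Lambda}{\Gamma}{p}$ and $\Rel{A_2}{\chi_2}{\Lambda}{\Gamma}{p}$.

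Finally I would read off the six cases. For $\mathcal U_i=\Dom{A_i}{\chi_i}{\Lambda}{\Gamma}{p}$ and $\mathcal U_i=\Cod{A_i}{\chi_i}{\Lambda}{\Gamma}{p}$ the interleaving of the $\Lambda$-modules, respectively the $\Gamma$-modules, is exactly what is required; for the image, kernel and cokernel I would invoke the elementary fact (see \cite{Coh-SteEdeHarMor}) that a $2\varepsilon$-interleaving of two morphisms restricts to $2\varepsilon$-interleavings of their images, kernels and cokernels; and the relative case is the interleaving constructed above. In every case $\dint(\mathcal U_1,\mathcal U_2)\leq 2\varepsilon$, whence $d_B(\Dgm\,\mathcal U_1,\Dgm\,\mathcal U_2)\leq 2\varepsilon$ by Theorem~\ref{thm:algebraic_stability_theorem}; letting $\varepsilon\downarrow\GH^{C}(\MP{1},\MP{2})$ gives the claim. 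I expect the main obstacle to be the bookkeeping around the passage from ``commutes up to contiguity'' (Lemmas~\ref{lem:diagrams_gamma} and~\ref{lem:diagrams_gamma-lambda}) to genuine commutativity of the homology-level diagrams---in particular on relative homology, via maps of pairs---and the verification that the interleaving of $\hat\iota_1$ with $\hat\iota_2$ is compatible in the precise sense needed to extract the image, kernel and cokernel interleavings.
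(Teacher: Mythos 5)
Your proposal is correct and follows essentially the same route as the paper: the same pair of $C$-constrained maps, the same induced simplicial maps and contiguity lemmas, passage to homology to obtain $2\varepsilon$-interleavings for the domain and codomain modules, compatibility with $\hat\iota$ to extract the image, kernel and cokernel interleavings, maps of pairs for the relative case, and the Algebraic Stability Theorem to conclude. The only cosmetic difference is that the paper performs the kernel/image/cokernel step by direct diagram chasing where you invoke the general fact that an interleaving of morphisms restricts to their images, kernels and cokernels; these are the same argument.
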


\begin{proof}
    Let $d_{GH}^C(\MP{1},\MP{2})<\varepsilon$. Let $f^1\colon\MP{1}\to\MP{2}$ and $f^2\colon\MP{2}\to\MP{1}$ be two $C$-constrained maps such that $\max\{\dis f^1,\dis f^2,\codis(f^1,f^2)\}\leq 2\varepsilon$.
    We argue that all the persistence modules of the six-pack are $2\varepsilon$-interleaved, from which the result follows according to Theorem \ref{thm:algebraic_stability_theorem}. 

    By \cite[Theorem 12.5]{Mun}, applying the homology functor $H_p$ to the diagrams described in Lemma \ref{lem:diagrams_gamma} yields commutative diagrams. The lower and upper layers are the desired $2\varepsilon$-interleavings for $\Dom{A_i}{\chi_i}{\Lambda}{\Gamma}{p}$ and $\Cod{A_i}{\chi_i}{\Lambda}{\Gamma}{p}$, respectively.

    Consider the kernels of the upwards inclusions, $j$, for each $\delta$. By standard diagram chasing, the restrictions of the maps $f^1$, $f^2$ and $i$ define maps between the kernels. These restrictions form the $2\varepsilon$-interleaving of $\Ker{A_i}{\chi_i}{\Lambda}{\Gamma}{p}$. Analogously, $2\varepsilon$-interleavings for $\Img{A_i}{\chi_i}{\Lambda}{\Gamma}{p}$ and $\Cok{A_i}{\chi_i}{\Lambda}{\Gamma}{p}$ are constructed.

    As for the relative case, note that the upper-layer of the aforementioned diagrams commutes up to {\em contiguity of pairs} with respect to the pairs $(\check{C}^\Gamma(\chi_i;A_i), \check{C}^\Lambda(\chi_i;A_i))$ for $i=1,2$ as defined in \cite{Mun}. By \cite[Theorem 12.6]{Mun}, the diagrams for the corresponding relative homology groups commute yielding the desired $2\varepsilon$-interleaving for $\Rel{A_i}{\chi_i}{\Lambda}{\Gamma}{p}$.
\end{proof}

Note that together with Corollary~\ref{coro:GH_metric_pairs}, the theorem is a direct generalization of a previous result \cite[Proposition~2.4]{TorGon} showing the same stability for degree 0 image, kernel and cokernel persistence of metric pairs; they use Vietoris-Rips complexes, which coincide with \v{Cech} complexes for degree 0.

In Example \ref{ex:chromatic_invariants} we showed how the stable invariants defined in Definition \ref{def:cchromatic_invariants} can be used to lower bound the $C$-constrained Gromov-Hausdorff distance. In Example \ref{ex:lower_bound_from_stability_6pack}, we describe a similar application for Theorem \ref{thm:stability_6_pack}. Exploiting the stability of Vietoris-Rips persistence diagrams to bound the usual Gromov-Hausdorff distance has already been done in different contexts, e.g., in \cite{LimMemOku} while estimating the Gromov-Hausdorff distance between spheres of different dimensions, and in \cite{PatMonGarGorNeu} to discuss isometries between languages.

\begin{example}        \label{ex:lower_bound_from_stability_6pack}
    Given two chromatic metric pairs, $\MP{1}$ and $\MP{2}$, and a constraint set $\N\in C\subseteq\mathcal P(\N)$, we can bound their $C$-constrained Gromov-Hausdorff distance using Theorem \ref{thm:stability_6_pack} as follows. Consider two subsets $G$ and $L$ of $\Tau_C$ consisting of finite subsets of $\N$ and such that $L$ is contained in $\downarrow G=\{\sigma\subseteq\N\mid\exists\tau\in G:\sigma\subseteq\tau\}$. Define $\Gamma=\downarrow G$ and $\Lambda=\downarrow L$, which are finite-dimensional simplicial complexes by construction and satisfy $\Lambda\leq\Gamma$. Note that $C(\Lambda)\subseteq L$ and $C(\Gamma)\subseteq G$, and so $C(\Gamma)\cup C(\Lambda)\subseteq\Tau_C$. Therefore,
    \[
    \begin{aligned}
    \GH^C(\MP{1},\MP{2})&\,=\GH^{\Tau_C}(\MP{1},\MP{2})\geq \GH^{C(\Gamma)\cup C(\Lambda)}(\MP{1},\MP{2})\geq\\
    &\,\geq\frac{1}{2}d_B(\Dgm_1,\Dgm_2),
    \end{aligned}
    \]
    where $\Dgm_1$ is one of the diagrams from the degree $p$ six-pack of persistence diagrams of the filtered pair $(\check{C}^\Gamma(\chi_1;A_1),\check C^\Lambda(\chi_1;A_1))$ and $\Dgm_2$ is the corresponding diagram for $(\check{C}^\Gamma(\chi_2;A_2),\check C^\Lambda(\chi_2;A_2))$.
    
    We provide two chromatic metric spaces $\chi_1$ and $\chi_2$ and a constraint set $\N\in C\subseteq\mathcal P(\N)$ such that the previous approach provides the optimal lower bound even if $\chi_1^{-1}(\sigma)$ and $\chi_2^{-1}(\sigma)$ are isometric for every $\sigma\in C$.

    Fix $r>0$. Define $X_1=X_2=[0,2r]\cup\{3r\}$,
    \[
    \chi_1\colon\begin{cases}\begin{aligned}
        [0,r)\cup\{2r\} &\mapsto 0\\
        [r,2r)\cup\{3r\} &\mapsto 1,
    \end{aligned}\end{cases}\quad\quad \chi_2\colon\begin{cases}\begin{aligned}
        [r,2r)\cup\{3r\} &\mapsto 0,\\
        [0,r)\cup\{2r\} &\mapsto 1,
        \end{aligned}\end{cases}
    \]
    (see Figure \ref{fig:lower_bound_from_stability_6pack_dim0}), and $C=\{\{0\},\{0,1\}\}$.  Clearly, $\chi_1^{-1}(\{0\})$ is isometric to $\chi_2^{-1}(\{0\})$. We define $L=\{\{0\}\}$ and $G=\{\{0,1\}\}$, and so $\Lambda=\{\{0\}\}$ and $\Gamma=\{\{0\},\{1\},\{0,1\}\}$.
        \begin{figure}[h!]
        \centering
        \begin{tikzpicture}
            \draw[lightgray] (0,0)--(4.5,0);
            \draw (0,0.1) node[above]{$0$};
            \draw (1.5,0.1) node[above]{$r$};
            \draw (3,0.1) node[above]{$2r$};
            \draw (4.5,0.1) node[above]{$3r$};
            \draw[cyan, ultra thick] (1.5,0)--(3,0);
            \fill[cyan] (4.5,0) circle (2pt);
            \draw[magenta, ultra thick] (0,0)--(1.5,0);
            \fill[magenta] (3,0) circle (2pt);
            \draw (2.25,-0.4) node[below]{$\chi_1$};
            \fill[magenta] (0,0) circle (2pt);
            \fill[cyan] (1.5,0) circle (2pt);
            
            \draw[lightgray] (6,0)--(10.5,0);
            \draw (6,0.1) node[above]{$0$};
            \draw (7.5,0.1) node[above]{$r$};
            \draw (9,0.1) node[above]{$2r$};
            \draw (10.5,0.1) node[above]{$3r$};
            \draw[magenta, ultra thick] (7.5,0)--(9,0);
            \fill[magenta] (10.5,0) circle (2pt);
            \draw[cyan, ultra thick] (6,0)--(7.5,0);
            \fill[cyan] (9,0) circle (2pt);
            \draw (8.25,-0.4) node[below]{$\chi_2$};
            \fill[cyan] (6,0) circle (2pt);
            \fill[magenta] (7.5,0) circle (2pt);
        \end{tikzpicture}
        \caption{\footnotesize A representation of the two chromatic metric pairs described in Example \ref{ex:lower_bound_from_stability_6pack}. The colors $0$ and $1$ are represented in magenta and cyan, respectively.}
        \label{fig:lower_bound_from_stability_6pack_dim0}
    \end{figure}
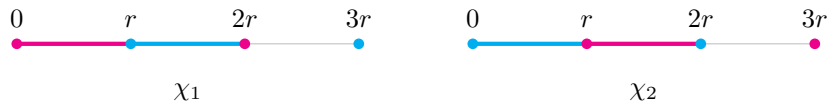
    Let $\mathbb X_i$ be the filtered pair $(\check{C}^\Gamma(\chi_i;X_i),\check{C}^\Lambda(\chi_i;X_i))$ for $i=1,2$\footnote{
    More precisely, to compute the six-pack, we consider an arbitrarily dense uniform discretization of $[0,2r)\cup\{3r\}$.}
    . Then, $\Dgm\Ker{A_1}{\chi_1}{\Lambda}{\Gamma}{0}$ has a single feature $(0,r)$, whereas $\Dgm\Ker{A_2}{\chi_2}{\Lambda}{\Gamma}{0}$ has none. Therefore, $\GH^C(\MP{1},\MP{2})\geq r/2$.

    We claim that $\GH^C(\MP{1},\MP{2})=r/2$. Let us construct two $C$-constrained maps $f\colon\chi_1\to\chi_2$ and $g\colon\chi_2\to\chi_1$. We set, for every $x\in[0,2r]\cup\{3r\}$,
    \[
    f(x)=\begin{cases}
        \begin{aligned}
            &r+x &\text{if $\chi_1(x)=0$,}\\
            &x &\text{otherwise,}
        \end{aligned}
    \end{cases}\quad\text{ and }\quad 
    g(x)=\begin{cases}
        \begin{aligned}
            & x-r &\text{if $\chi_2(x)=0$,}\\
            & x &\text{otherwise}
        \end{aligned}
    \end{cases}
    \]
    (see Figure \ref{fig:lower_bound_from_stability_6pack_maps} for their representation). It is easy to see that $\max\{\dis f,\dis g,\codis(f,g)\}=r$, which implies the claimed upper bound.

    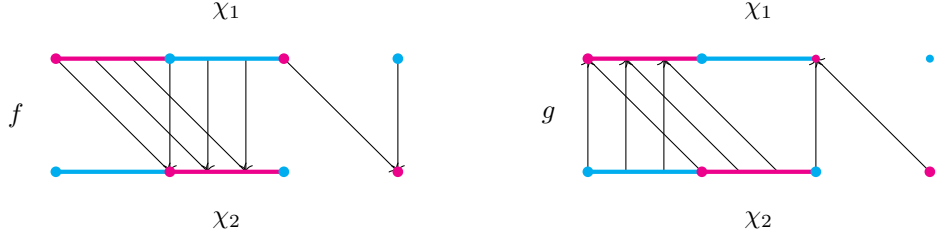
\begin{figure}[h!]
        \centering
        \begin{tikzpicture}
            \draw[->] (0,0)--(1.5,-1.5);
            \draw[->] (0.5,0)--(2,-1.5);
            \draw[->] (1,0)--(2.5,-1.5);
            \draw[->] (1.5,0)--(1.5,-1.5);
            \draw[->] (2,0)--(2,-1.5);
            \draw[->] (2.5,0)--(2.5,-1.5);
            \draw[->] (3,0)--(4.5,-1.5);
            \draw[->] (4.5,0)--(4.5,-1.5);

            \draw (-0.3,-0.75) node[left]{$f$};
            
            \draw[cyan, ultra thick] (1.5,0)--(3,0);
            \fill[cyan] (4.5,0) circle (2pt);
            \draw[magenta, ultra thick] (0,0)--(1.5,0);
            \fill[magenta] (3,0) circle (2pt);
            \draw (2.25,0.4) node[above]{$\chi_1$};
            \fill[magenta] (0,0) circle (2pt);
            \fill[cyan] (1.5,0) circle (2pt);
            
            \draw[magenta, ultra thick] (1.5,-1.5)--(3,-1.5);
            \fill[magenta] (4.5,-1.5) circle (2pt);
            \draw[cyan, ultra thick] (0,-1.5)--(1.5,-1.5);
            \fill[cyan] (3,-1.5) circle (2pt);
            \draw (2.25,-1.9) node[below]{$\chi_2$};
            \fill[cyan] (0,-1.5) circle (2pt);
            \fill[magenta] (1.5,-1.5) circle (2pt);

            \draw[->] (7,-1.5)--(7,0);
            \draw[->] (7.5,-1.5)--(7.5,0);
            \draw[->] (8,-1.5)--(8,0);
            \draw[->] (8.5,-1.5)--(7,0);
            \draw[->] (9,-1.5)--(7.5,0);
            \draw[->] (9.5,-1.5)--(8,0);
            \draw[->] (10,-1.5)--(10,0);
            \draw[->] (11.5,-1.5)--(10,0);            
            \draw (6.7,-0.75) node[left]{$g$};

            \draw[cyan, ultra thick] (8.5,0)--(10,0);
            \fill[cyan] (11.5,0) circle (1.5pt);
            \draw[magenta, ultra thick] (7,0)--(8.5,0);
            \fill[magenta] (10,0) circle (1.5pt);
            \draw (9.25,0.4) node[above]{$\chi_1$};
            \fill[magenta] (7,0) circle (2pt);
            \fill[cyan] (8.5,0) circle (2pt);
            
            \draw[magenta, ultra thick] (8.5,-1.5)--(10,-1.5);
            \fill[magenta] (11.5,-1.5) circle (2pt);
            \draw[cyan, ultra thick] (7,-1.5)--(8.5,-1.5);
            \fill[cyan] (10,-1.5) circle (2pt);
            \draw (9.25,-1.9) node[below]{$\chi_2$};
            \fill[cyan] (7,-1.5) circle (2pt);
            \fill[magenta] (8.5,-1.5) circle (2pt);
        \end{tikzpicture}
        \caption{\footnotesize A representation of the $C$-cconstrained maps $f$ (on the left-hand side) and $g$ (on the right-hand side) constructed in Example \ref{ex:lower_bound_from_stability_6pack} to provide an upper bound to the $C$-constrained Gromov-Hausdorff distance between $\chi_1$ and $\chi_2$.}
        \label{fig:lower_bound_from_stability_6pack_maps}
    \end{figure}
\end{example}

\subsection{Stability of the ambient \v{C}ech persistence diagrams}\label{sub:Cech_stability}

As a particular case of Theorem \ref{thm:stability_6_pack}, we obtain the following result.
\begin{corollary}\label{coro:stability_ambient_cech}
	Let $(A_1,X_1)$ and $(A_2,X_2)$ be two metric pairs such that $X_1$ and $X_2$ are finite. Then, for every dimension $k$,
	\[
	d_B(\Dgm_k(\check{C}(X_1;A_1)),\Dgm_k(\check{C}(X_2;A_2)))\leq 2\GH((A_1,X_1),(A_2,X_2)).
	\]
\end{corollary}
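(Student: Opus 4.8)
The plan is to obtain the corollary as a degenerate case of Theorem~\ref{thm:stability_6_pack}, namely the case in which the coloring carries no information. Following Remark~\ref{rem:C-constrained_GH_and_other_GH}(ii), I would regard each metric pair $(A_i,X_i)$ as the chromatic metric pair $(A_i,0_{X_i})$ with the constant-zero coloring, so that by definition $\GH((A_1,X_1),(A_2,X_2))=\GH^{\{\N\}}((A_1,0_{X_1}),(A_2,0_{X_2}))$. Then I would apply Theorem~\ref{thm:stability_6_pack} with $\Gamma=\Lambda$ both equal to the (finite-dimensional) simplicial complex $\{\emptyset,\{0\}\}$ on $\N$ with the single vertex $0$, and with degree $p=k$; the finiteness hypothesis on $X_1,X_2$ transfers directly.

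Two small identifications then need to be checked. First, for the constant-zero coloring $\chi=0_X$ every nonempty simplex $\mu$ of $\check C_\delta(X;A)$ satisfies $\chi(\mu)=\{0\}\in\Gamma$, so $\check C_\delta^\Gamma(0_X;A)=\check C_\delta(X;A)$ for all $\delta$; consequently $\Cod{A_i}{0_{X_i}}{\Lambda}{\Gamma}{k}=H_k(\check C^\Gamma(0_{X_i};A_i))=H_k(\check C(X_i;A_i))$, whose persistence diagram is $\Dgm_k(\check C(X_i;A_i))$. (With $\Lambda=\Gamma$ the induced map $\hat\iota$ is the identity, so the six-pack degenerates and only the codomain module is relevant, the other five being either equal to it or trivial.) Second, $C=C(\Gamma)\cup C(\Lambda)=C(\Gamma)=\{\{0\}\}$, and a map $f\colon A_1\to A_2$ is $\{\{0\}\}$-constrained for these colorings precisely when $f(X_1)\subseteq X_2$, which is exactly the $\{\N\}$-constraint; since $\GH^{C'}$ depends only on which pairs of maps $C'$ permits, it follows that $\GH^{\{\{0\}\}}((A_1,0_{X_1}),(A_2,0_{X_2}))=\GH^{\{\N\}}((A_1,0_{X_1}),(A_2,0_{X_2}))=\GH((A_1,X_1),(A_2,X_2))$.

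Putting these together, Theorem~\ref{thm:stability_6_pack} applied to $\mathcal U_i=\Cod{A_i}{0_{X_i}}{\Lambda}{\Gamma}{k}$ gives
\[
d_B\bigl(\Dgm_k(\check C(X_1;A_1)),\Dgm_k(\check C(X_2;A_2))\bigr)\le 2\,\GH^{\{\{0\}\}}\bigl((A_1,0_{X_1}),(A_2,0_{X_2})\bigr)=2\,\GH((A_1,X_1),(A_2,X_2)),
\]
which is the claim. I do not expect a genuine obstacle here, since all the substance is in Theorem~\ref{thm:stability_6_pack}; the only points requiring care are verifying that the degenerate complex $\{\emptyset,\{0\}\}$ meets the hypotheses of that theorem and that its $\Gamma$-subcomplex of the \v{C}ech complex is the whole \v{C}ech complex, together with the bookkeeping identification of the constraint sets above.
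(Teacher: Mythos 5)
Your proposal is correct and follows essentially the same route as the paper, which obtains the corollary precisely as the degenerate case of Theorem~\ref{thm:stability_6_pack} with the constant-zero coloring (Remark~\ref{rem:C-constrained_GH_and_other_GH}(ii)) and a trivial pair of complexes, so that the $\Gamma$-subcomplex is the full ambient \v{C}ech complex and the constraint set $\{\{0\}\}$ permits exactly the pair maps, matching $\GH^{\{\N\}}$. The bookkeeping you spell out (finite-dimensionality of $\{\emptyset,\{0\}\}$, the identification $\check C_\delta^\Gamma(0_X;A)=\check C_\delta(X;A)$, and the equality of the two constrained distances) is exactly what is needed and is carried out correctly.
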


There is an alternative strategy to obtain the same result under some mild technical assumptions. Following \cite{Mem_tripods}, a {\em filtered space} consists of a pair $(X,F_X)$ of a finite set $X$ and a function $F_X\colon\mathcal P(X)\to\R$, called {\em filtration}, that is monotone with respect to inclusions (i.e., $F_X(A)\leq F_X(B)$ provided that $A\subseteq B\subseteq X$).
In \cite{Mem_tripods}, the author defines a distance $d_{\mathcal F}$ between two such filtered spaces $(X,F_X)$ and $(Y,F_Y)$. 
A {\em tripod} $(Z,f,g)$ between $X$ and $Y$ consists of a finite set $Z$ and two surjective maps
    \[
    \xymatrix{
        & Z\ar^{g}[dr]\ar_{f}[dl] &\\
        X & & Y.
    }
    \]
Then, 
\[
d_{\mathcal F}((X,F_X),(Y,F_Y)
)=\newinf_{(Z,f,g)\text{ tripod}}\sup_{A\in\mathcal P(Z)}\lvert F_X(f(A))-F_Y(g(A))\rvert.
\]
Furthermore, it is shown in \cite[Proposition 5.1]{Mem_tripods} that, for every dimension $k$,
\begin{equation}\label{eq:stability_and_tripods}
	d_B(\Dgm_k(X,F_X),\Dgm_k(Y,F_Y)))\leq d_{\mathcal F}((X,F_X),(Y,F_Y)).
\end{equation}

Let $(A,X)$ be a metric pair with $X$ finite. For every subset $\sigma$ of $X$, define the {\em circumradius} of $\sigma$ in $A$ as
\begin{equation}\label{eq:circumradius}
    \rad_A(\sigma)=\inf_{a\in A}\max_{x\in \sigma}d(a,x).
\end{equation}

\begin{proposition}\label{prop:tripods}
    Let $(A_1,X_1)$ and $(A_2,X_2)$ be two metric pairs with $X_1$ and $X_2$ finite. Then,
    \[
        d_{\mathcal F}((X_1,\rad_{A_1}),(X_2,\rad_{A_2}))\leq 2\GH((A_1,X_1),(A_2,X_2))
    \]
\end{proposition}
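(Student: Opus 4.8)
The plan is to relate the filtered-space distance $d_{\mathcal F}$ directly to correspondences realizing the Gromov-Hausdorff distance between the metric pairs. Fix $\varepsilon > \GH((A_1,X_1),(A_2,X_2))$. By the definition of the Gromov-Hausdorff distance between metric pairs as $\GH^{\{\N\}}((A_1,0_{X_1}),(A_2,0_{X_2}))$ and Theorem~\ref{thm:characterization_via_correspondences} (applied with $C = \{\N\}$, so $\Sigma_C = C' = \{\N\}$), there is a $\{\N\}$-constrained correspondence $R \in \mathcal R^{\{\N\}}((A_1,X_1),(A_2,X_2))$ with $\dis R < 2\varepsilon$; here $\{\N\}$-constrained means exactly that $R|_{X_1 \times X_2}$ is a correspondence. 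From $R$ I would extract a tripod $(Z, f, g)$ between $X_1$ and $X_2$ by first choosing, for every $x \in X_1$, some $y_x \in X_2$ with $(x, y_x) \in R|_{X_1\times X_2}$, and for every $y \in X_2$ some $x_y \in X_1$ with $(x_y, y) \in R|_{X_1 \times X_2}$; then set $Z = \{(x, y_x) : x \in X_1\} \cup \{(x_y, y): y \in X_2\} \subseteq X_1 \times X_2$, with $f$ and $g$ the two coordinate projections. Both are surjective onto the (finite) sets $X_1, X_2$, so $(Z,f,g)$ is a legitimate tripod, and $Z \subseteq R|_{X_1\times X_2}$.

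The crux is then the estimate $\sup_{B \in \mathcal P(Z)} |\rad_{A_1}(f(B)) - \rad_{A_2}(g(B))| \leq 2\varepsilon$. Fix $B \subseteq Z$; write $B = \{(x_i, y_i)\}_{i}$ with $x_i = f(\cdot)$, $y_i = g(\cdot)$, so $f(B) = \{x_i\}_i$ and $g(B) = \{y_i\}_i$, and each pair $(x_i, y_i) \in R|_{X_1 \times X_2}$. I claim $\rad_{A_2}(g(B)) \leq \rad_{A_1}(f(B)) + 2\varepsilon$; the reverse inequality is symmetric. Take any $a \in A_1$ with $\max_i d_1(a, x_i)$ close to $\rad_{A_1}(f(B))$. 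Since $R$ is a correspondence on the whole ambient spaces, pick $b \in A_2$ with $(a, b) \in R$. Then for each $i$, using that both $(a,b)$ and $(x_i, y_i)$ lie in $R$,
\[
d_2(b, y_i) \leq d_1(a, x_i) + \dis R < d_1(a, x_i) + 2\varepsilon \leq \max_j d_1(a, x_j) + 2\varepsilon,
\]
hence $\max_i d_2(b, y_i) \leq \max_j d_1(a, x_j) + 2\varepsilon$, and taking the infimum over $a$ (and the corresponding $b$) gives $\rad_{A_2}(g(B)) \leq \rad_{A_1}(f(B)) + 2\varepsilon$. Note the empty-set case is harmless since $\rad$ of the empty set is an infimum over a still-nonempty index-free max, giving $\inf_{a} (-\infty)$-type conventions—one should either exclude $\emptyset$ or observe both sides agree; I would simply remark that for $B = \emptyset$ both circumradii equal $\inf_{a\in A_i}(\text{empty max})$, consistently defined, so the bound holds trivially. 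Therefore $d_{\mathcal F}((X_1, \rad_{A_1}), (X_2, \rad_{A_2})) \leq 2\varepsilon$, and letting $\varepsilon \downarrow \GH((A_1,X_1),(A_2,X_2))$ finishes the proof.

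The main obstacle I anticipate is purely bookkeeping rather than conceptual: making sure the tripod is built from a single correspondence that is simultaneously a correspondence on the ambient spaces $A_1, A_2$ and on the subspaces $X_1, X_2$ (so that the same $\dis R$ controls all the distance comparisons), and handling the degenerate/empty subset conventions in the definition of $\rad_A$. One should also double-check that $\rad_{A_i}(\sigma)$ only ever needs to be evaluated on subsets of $X_i$ of the form $f(B)$ with $B \subseteq Z$, which are exactly arbitrary subsets of $X_i$ since $f, g$ are surjective and $Z$ contains a full graph over each side; thus the supremum over $\mathcal P(Z)$ is genuinely controlled. No monotonicity check of $\rad_{A_i}$ is needed here, as that is assumed from the cited framework, though it is immediate from \eqref{eq:circumradius}.
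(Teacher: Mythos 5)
Your proposal is correct and follows essentially the same approach as the paper's proof: take a correspondence on the ambient spaces whose restriction to $X_1\times X_2$ is again a correspondence, use (a subset of) that restriction as the tripod with the coordinate projections, and bound the difference of circumradii by transporting a near-optimal center $a\in A_1$ to some $b\in A_2$ with $(a,b)\in R$. The only cosmetic difference is that you build the tripod from graphs of selection maps rather than using the whole restricted correspondence as the paper does, and you are somewhat more careful about finiteness of the tripod and the empty-set convention.
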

\begin{proof}
    Let $R$ be a correspondence between $A_1$ and $A_2$ such that $R|_{X_1\times X_2}$ is a correspondence. Consider the tripod $(R,\pi_1,\pi_2)$, where $\pi_1$ and $\pi_2$ are the canonical projections restricted to $R$. Let $\sigma$ be a finite subset of $R$. Fix $\varepsilon>0$ and pick $a_1\in A_1$ such that $d_1(a_1,x_1)\leq\rad_{A_1}(\pi_1(\sigma))+\varepsilon$ for every $x_1\in\pi_1(\sigma)$. Arbitrarily choose a point $a_2\in A_2$ such that $(a_1,a_2)\in R$. Then, for every $x_2\in \pi_2(\sigma)$,
    \[
    d_2(x_2,a_2)\leq d_1(x_1,a_1)+\dis R\leq\rad_{A_1}(\pi_1(\sigma))+\dis R+\varepsilon.
    \]
    Therefore, \[\rad_{A_2}(\pi_1(\sigma))\leq\rad_{A_1}(\pi_2(\sigma))+\dis R+\varepsilon.
    \]
    Similarly, we obtain
    \[
    \rad_{A_1}(\pi_1(\sigma))\leq\rad_{A_2}(\pi_2(\sigma))+\dis R+\varepsilon,
    \]
    which shows that $d_{\mathcal F}((X_1,\rad_{A_1}),(X_2,\rad_{A_2}))\leq\dis R+\varepsilon$. Since both $\varepsilon$ and the initial correspondence are arbitrary, the desired inequality follows.
\end{proof}

Given a metric pair $(A,X)$ with $X$ finite, the connection between the filtered space $(X,\rad_A)$ and the ambient \v{C}ech complex $\check{C}(X;A)$ is not immediate in general. However, if the infimum in \eqref{eq:circumradius} is achieved for every $\sigma\subseteq A$, e.g., if $A$ is finite, then the two objects can be identified. In this setting, Corollary~\ref{coro:stability_ambient_cech} immediately follows from Proposition~\ref{prop:tripods} and \eqref{eq:stability_and_tripods}. [[I suspect that even in the general setting the persistence diagrams of $(X,\rad_A)$ and $\check{C}(X;A)$ coincide, don't they?]] In \cite[Proposition 5.2]{Mem_tripods}, the same result was proved for the intrinsic \v{C}ech filtration $\check{C}(X;X)$ and $\check{C}(Y;Y)$ of a pair of finite metric spaces $X$ and $Y$ with respect to their usual Gromov-Hausdorff distance.

\bigskip

\noindent Inria Centre at Universit\'{e} C\^{o}te d'Azur\\
ondrej.draganov@inria.fr

\smallskip 

\noindent University of Vienna\\
sophie.rosenmeier@univie.ac.at

\smallskip

\noindent Institute of Science and Technology Austria (ISTA)\\
nicolo.zava@gmail.com 


\begin{thebibliography}{99}

\bibitem{AgaFoxNatSidWan} 
P.K. Agarwal, K. Fox, A. Nath, A. Sidiropoulos, Y. Wang, {\em Computing the Gromov-Hausdorff Distance for Metric Trees}, ACM Transactions on Algorithms 14 (2), 1--20 (2018).

\bibitem{BobRea}
Y. Reani, O. Bobrowski, {\em A Coupled Alpha Complex}, Journal of Computational Geometry {\bf 14} (1) (2023), 221--256, \href{https://doi.org/10.20382/jocg.v14i1a9}{doi:10.20382/jocg.v14i1a9}.

\bibitem{BurBurIva} 
D. Burago, Y. Burago, S. Ivanov, {\em A Course in Metric Geometry}, AMS, 2001.

\bibitem{ChaHarTil} 
T. Chaplin, H.A. Harrington, U. Tillmann, {\em Grounded Persistent Path Homology: A Stable, Topological Descriptor for Weighted Digraphs}, Found. Comput. Math. (2024).

\bibitem{ChaCoh-SteGliGuiOud}
F. Chazal, D. Cohen-Steiner, M. Glisse, L.J. Guibas, S.Y. Oudot, {\em Proximity of persistence modules and their diagrams}, in: Proceedings of the Twenty-Fifth Annual Symposium on Computational Geometry,
pp. 237--246. ACM, London (2009)

\bibitem{ChaCoh-SteGuiMemOud} 
F. Chazal, D. Cohen-Steiner, L. J. Guibas, F. M\'emoli, S. Y. Oudot, {\em Gromov-Hausdorff stable signatures for shapes using persistence}, in Computer Graphics Forum, volume 28, 1393--1403, Wiley Online Library (2009). 

\bibitem{ChaDeSOud} 
F. Chazal, V. De Silva, S. Oudot, {\em Persistence stability for geometric complexes}, Geometriae Dedicata, 173(1):193--214 (2014).

\bibitem{ChoMem3} 
S. Chowdhury, F. M\'emoli, {\em A functorial Dowker theorem and persistent homology of asymmetric networks}, Journal of Applied and Computational Topology 2 (1) (2018), 115--175.

\bibitem{ChoMem_geo} 
S. Chowdhury, F. M\'emoli, {\em Explicit geodesics in Gromov-Hausdorff space}. Electron. Res. Announc. 25, 48--59 (2018). 

\bibitem{Coh-SteEdeHar} 
D. Cohen-Steiner, H. Edelsbrunner, J. Harer, {\em Stability of persistence diagrams}, in Proc.
21st ACM Sympos. Comput. Geom., pages 263--271, 2005.

\bibitem{Coh-SteEdeHarMor}
D. Cohen-Steiner, H. Edelsbrunner, J. Harer, D. Morozov, {\em Persistent Homology for Kernels, Images, and Cokernels}, Proceedings of the Twentieth Annual ACM-SIAM Symposium on Discrete Algorithms, 1011--1020 (2009).

\bibitem{Cra-Boe}
W. Crawley-Boevey, {\em Decomposition of pointwise finite-dimensional persistence modules}, Journal of Algebra and Its Applications {\bf 14}(5), 1550066 (2015)
\href{https://doi.org/10.1142/S0219498815500668}{\tt doi:10.1142/S0219498815500668}

\bibitem{CulDraEdeSag1}
S. Cultrera di Montesano, O. Draganov, H. Edelsbrunner, M. Saghafian, {\em Chromatic Alpha Complexes}, Foundations of Data Science, \href{https://doi.org/10.3934/fods.2025003}{doi:10.3934/fods.2025003} (2024).

\bibitem{CulDraEdeSag2}
S. Cultrera di Montesano, O. Draganov, H. Edelsbrunner, M. Saghafian, {\em Chromatic Topological Data Analysis}, preprint, arXiv:2406.04102 (2024).

\bibitem{DabMemFraCar}
Y. Dabaghian, F. M\'emoli, L. Frank, G. Carlsson, {\em A Topological Paradigm for Hippocampal Spatial Map Formation Using Persistent Homology}, PLoS Comput Biol 8(8): e1002581 (2012).

\bibitem{EdeHar}
H. Edelsbrunner and J. Harer, {\em Computational topology: an introduction}, American Mathematical Soc. (2010)

\bibitem{edwards:75} 
D. A. Edwards, {\em The structure of superspace}, published in: Studies in Topology, Academic Press, 1975.

\bibitem{directed}
L. Fajstrup, B.T. Fasy, W. Li, L. Mezrag, T. Rask, F. Tombari, \v{Z}. Urban\v{c}i\v{c}, {\em Gromov-Hausdorff distance for directed spaces}, preprint, \href{https://arxiv.org/abs/2408.14394}{\tt arXiv:2408.14394} (2024).

\bibitem{Gab}
P. Gabriel, {\em Unzerlegbare Darstellungen I}, Manuscripta Math {\bf 6}, 71--103 (1972), \href{https://doi.org/10.1007/BF01298413}{\tt doi:10.1007/BF01298413}

\bibitem{GomChe}
A.A. G\'omez, M. Che, {\em Gromov-Hausdorff convergence of metric pairs and metric tuples}, Differential Geometry and its Applications, 94, (2024), 102135.

\bibitem{gromov:81} 
M. Gromov, {\em Structures M\'etriques pour les Vari\'et\'es Riemanniennes} (J. Lafontaine and P. Pansu, eds.), Textes Math\'ematiques, 1, CEDIC, Paris, 1981.

\bibitem{kadets:75}
M. I. Kadets, {\em Note on the gap between subspaces}, Funkts. Anal. Prilozhen. 9 (1975), no. 2, 73--74; English transl. in Funct. Anal. Appl. 9 (1975), 156--157.

\bibitem{KalOst} 
N. J. Kalton, M. I. Ostrovskii, {\em Distances between Banach spaces}, Forum Mathematicum, {\bf 11}(1) (1999), 17--48, \href{https://doi.org/10.1515/form.11.1.17}{\tt doi:10.1515/form.11.1.17} .

\bibitem{Khe}
Ali Khezeli, {\em A unified framework for generalizing the Gromov-Hausdorff metric}, Probab. Surveys 20: 837-896 (2023).

\bibitem{LimMemOku}
S. Lim, F. M\'emoli, O.B. Okutan, {\em Vietoris-Rips persistent homology, injective metric
spaces, and the filling radius}, Algebraic \& Geometric Topology 24, no. 2 (2024), 1019--1100.

\bibitem{Mem12} 
F. M\'emoli, {\em Some properties of Gromov-Hausdorff distances}, Discrete \& Computational Geometry 48 (2) (2012), 416--440.

\bibitem{Mem_tripods} 
F. M\'emoli, {\em A distance between filtered spaces via tripods}, preprint, \href{https://arxiv.org/abs/1704.03965}{\tt arXiv:1704.03965} (2017). 

\bibitem{MemSap}
F. M\'emoli, G. Sapiro, {\em A Theoretical and Computational Framework for Isometry Invariant Recognition of Point Cloud Data}, Found. Comput. Math. 313--347 (2005).

\bibitem{Mun}
J. R. Munkres, {\em Elements of Algebraic Topology}, vol. 7. Addison-Wesley, Reading (1984).

\bibitem{PatMonGarGorNeu}
B. Patra, J.R.A. Moniz, S. Garg, M.R. Gormley, G. Neubig, {\em Bilingual Lexicon Induction with Semi-supervision in Non-Isometric Embedding Spaces}, Proceedings of the 57th Conference of the Association for Computational Linguistics (2019) 184--193.

\bibitem{Pet} 
P. Petersen, {\em Riemannian Geometry}, Springer, New York (1998).

\bibitem{StoDheBul}
B.J. Stolz, J. Dhesi, J.A. Bull, et al., {\em Relational Persistent Homology for Multispecies Data with Application to the Tumor Microenvironment}, Bull Math Biol 86, 128 (2024).

\bibitem{Sch} 
F. Schmiedl, {\em Computational aspects of the Gromov-Hausdorff distance and its application in non-rigid shape matching}, Discrete Comput. Geom. 57, 4 (2017), 854--880.

\bibitem{TorGon}
\'A. Torras-Casas, R. Gonzalez-Diaz, {\em Properties and Stability of Persistence Matching Diagrams}, \href{https://arxiv.org/abs/2409.14954}{\tt arXiv:2409.14954} (2024).

\bibitem{Tuz} 
A. A. Tuzhilin, {\em Lectures on Hausdorff and Gromov-Hausdorff Distance Geometry}, The course was given at Peking University, Fall 2019, \href{https://arxiv.org/abs/2012.00756}{\tt arXiv:2012.00756}.

\bibitem{Zav_q}
N. Zava, {\em The stabilty of the $q$-hyperconvex hull of a quasi-metric space}, preprint, \href{https://arxiv.org/abs/2208.10619}{\tt arXiv:2208.10619}. 

\end{thebibliography}
\end{document}